\newcommand{\Mat}{\operatorname{M}}
\newcommand{\id}{\operatorname{id}}
\newcommand{\GL}{\operatorname{GL}}
\newcommand{\Ker}{\operatorname{Ker}}
\newcommand{\End}{\operatorname{End}}
\newcommand{\Vect}{\operatorname{span}}
\newcommand{\im}{\operatorname{Im}}
\newcommand{\tr}{\operatorname{tr}}
\newcommand{\rk}{\operatorname{rk}}
\renewcommand{\setminus}{\smallsetminus}
\def\F{\mathbb{F}}
\def\N{\mathbb{N}}
\def\Z{\mathbb{Z}}
\def\calA{\mathcal{A}}
\def\calB{\mathcal{B}}
\def\calH{\mathcal{H}}
\def\lcro{\mathopen{[\![}}
\def\rcro{\mathclose{]\!]}}
\theoremstyle{definition}
\newtheorem{Def}{Definition}[section]
\newtheorem{Not}[Def]{Notation}
\theoremstyle{plain}
\newtheorem{theo}{Theorem}[section]
\newtheorem{prop}[theo]{Proposition}
\newtheorem{cor}[theo]{Corollary}
\newtheorem{lemma}[theo]{Lemma}
\newtheorem{claim}{Claim}
\theoremstyle{plain}
\theoremstyle{remark}
\newtheorem{Rems}{Remarks}
\newtheorem{Rem}[Rems]{Remark}
\newtheorem{ex}[Rems]{Example}
\title{Products of involutions of an infinite-dimensional vector space}
\author{Cl\'ement de Seguins Pazzis\footnote{Universit\'e de Versailles Saint-Quentin-en-Yvelines, Laboratoire de Math\'ematiques
de Versailles, 45 avenue des Etats-Unis, 78035 Versailles cedex, France}
\footnote{e-mail address: dsp.prof@gmail.com}}
\begin{document}

\thispagestyle{plain}

\maketitle

\begin{abstract}
We prove that every automorphism of an infinite-dimensional vector space over a field is the product of four involutions,
a result that is optimal in the general case. We also characterize the automorphisms that are the product of three involutions.
More generally, we study decompositions of automorphisms into three or four factors with prescribed split annihilating polynomials of degree $2$.
\end{abstract}

\vskip 2mm
\noindent
\emph{AMS Classification:} 15A23

\vskip 2mm
\noindent
\emph{Keywords:} Infinite dimension, Decomposition, Involution, Quadratic automorphism, Ordinals, Bilateral shift.

\section{Introduction}

\subsection{The problem}

Let $\F$ be a field. In this article, all the vector spaces we consider have $\F$ as ground field unless stated otherwise.
Given a positive integer $n$, it is folklore that an $n$ by $n$ invertible matrix $A \in \GL_n(\F)$
is a product of involutions if and only if $\det A=\pm 1$. Moreover, if that condition holds then $A$
is actually the product of four involutions \cite{Gustafsonetal}, and it is known that three involutions do not suffice in general
(classically, if $\lambda$ is a nonzero scalar such that $\lambda^4 \neq  1$ and $\lambda^n=\pm 1$, then the matrix $\lambda I_n$ is not the product of three involutions).
The products of two involutions in $\GL_n(\F)$ are the matrices that are similar to their inverse (see \cite{Djokovic}, \cite{HoffmanPaige} and
\cite{Wonenburger}). However, there is no neat classification of the invertible matrices that are products of three involutions, and there is
little hope of ever finding one (see \cite{Ballantine,Liu} and the very recent \cite{dSP3involutions}).

To our knowledge, the corresponding issue in the infinite-dimensional setting has been entirely neglected
although it was cited as an interesting open issue in the end section of \cite{Gustafsonetal}.
So, let $V$ be an infinite-dimensional vector space, and consider the ring $\End(V)$
of all endomorphisms of $V$, and its group $\GL(V)$ of units, i.e.\ the automorphisms of $V$.
Of course, every product of (linear) involutions of $V$ belongs to $\GL(V)$.
We aim at proving that every element of $\GL(V)$ is a product of involutions. Better still, we will find
the minimal integer $p$ such that every element of $\GL(V)$ is the product of $p$ involutions.

There is a corresponding problem in the group of units of the algebra of all bounded operators of an infinite-dimensional complex Hilbert space $H$:
it was shown by Radjavi \cite{Radjavi} that every invertible operator in that algebra is the product of
seven involutions; moreover, four involutions do not suffice in general
(it is not difficult to prove that, for any nonzero complex number $z$ with modulus different from $1$, the element $z\cdot \id_H$
is not the product of four involutions in the algebra $\calB(H)$). It is not known whether five involutions suffice, and
no advance has been made ever since in this problem. A few years earlier, Halmos and Kakutani \cite{HalmosKakutani} famously
proved that every unitary operator on $H$ is the product of four symmetries (i.e.\ of unitary involutions), and that three symmetries do not suffice in general.
However, those problems are profoundly different from ours because they involve structures from analysis, whereas
the problem we deal with here is purely algebraic.

\subsection{Main results}

Here is one of our main results:

\begin{theo}\label{theo4invol}
Let $V$ be an infinite-dimensional vector space.
Every automorphism of $V$ is the product of four involutions.
\end{theo}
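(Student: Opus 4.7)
The proof I propose combines two ingredients: the infinite-dimensional version of the Wonenburger/Djokovi\'c/Hoffman--Paige theorem (``product of two involutions $\Leftrightarrow$ conjugate to the inverse''), and a bilateral-shift factorization argument. Accordingly, the plan is to write every $u \in \GL(V)$ as $u = a b$ where both $a$ and $b$ are conjugate to their respective inverses; each factor will then itself be a product of two involutions, giving the desired four in total. The infinite-dimensional version of the characterization would be established by adapting the classical proof, applied on each block of a Zorn-style decomposition of $V$ into $u$-indecomposable pieces (or by invoking it if it has already appeared in the infinite-dimensional literature).

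To produce the factorization $u = ab$, the key tool is a \emph{bilateral shift}. Given a $\Z$-indexed direct sum decomposition $V = \bigoplus_{n \in \Z} V_n$, any automorphism $s$ with $s(V_n) = V_{n+1}$ for all $n$ is automatically conjugate to $s^{-1}$: letting $\tau$ be the involution identifying $V_n$ with $V_{-n}$ through the iterates of $s$, a direct computation shows $\tau s \tau = s^{-1}$, so that $s = \tau \cdot (s^{-1}\tau)$ exhibits $s$ as a product of two involutions. The aim is therefore to exhibit, for the given $u$, a bilateral shift $s$ with respect to some decomposition such that $u s^{-1}$ is \emph{itself} a bilateral shift with respect to some other decomposition; setting $b = s$ and $a = u s^{-1}$ then yields the required factorization.

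Constructing such an $s$ amounts to building simultaneously two $\Z$-indexed decompositions $V = \bigoplus_{n \in \Z} V_n = \bigoplus_{n \in \Z} W_n$ together with $s$, subject to $s(V_n) = V_{n+1}$ and $(u s^{-1})(W_n) = W_{n+1}$. The infinite dimension of $V$ is what makes this feasible: a transfinite recursion on a well-ordering of a basis of $V$ should allow one, at each stage, to incorporate a new basis vector by enlarging both decompositions under the operations $s^{\pm 1}$ and $u^{\pm 1}$, thickening the current layers so as to absorb the new data. The main obstacle I anticipate is precisely this coupled construction: since the two decompositions $(V_n)$ and $(W_n)$ are intertwined through $u$, one must verify that the partial data at each stage can be extended without inconsistency and that the process exhausts $V$, with additional care at limit ordinals to ensure that the accumulated partial decompositions remain genuine direct sum decompositions. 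I would expect most of the technical work to concentrate here, whereas the reduction via Wonenburger and the shift-is-two-involutions calculation are comparatively routine.
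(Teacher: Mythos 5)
Your reduction of a bilateral shift to a product of two involutions is correct (it is essentially the paper's Theorem~\ref{theo2} in the special case $p=q=t^2-1$, with an even more explicit conjugating involution), but the step on which the whole argument rests --- producing, for an arbitrary $u\in\GL(V)$, a bilateral shift $s$ such that $us^{-1}$ is again a bilateral shift --- is not established, and it is in fact equivalent to a problem the paper explicitly states as open. A bilateral shift in your sense (an automorphism permuting the layers of a $\Z$-indexed decomposition by $V_n\mapsto V_{n+1}$) is exactly an \emph{elementary} automorphism in the sense of the paper (one for which $V^s$ is a free $\F[t,t^{-1}]$-module), and whether every automorphism of an infinite-dimensional space is the product of two elementary automorphisms is posed in the introduction as an open question (its resolution would, for instance, make every automorphism of a space of uncountable dimension a commutator). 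Your transfinite recursion sketch does not engage with the actual difficulty: the two decompositions $(V_n)$ and $(W_n)$ are coupled through $u$, and ``thickening the current layers to absorb new data'' has no reason to close up into two genuine $\Z$-graded direct sum decompositions simultaneously. For comparison, the paper proves only the much weaker one-sided statement that (after possibly peeling off one involution to remove a dominant eigenvalue, Lemma~\ref{4to3lemma}) $u$ is $p$-adjacent to a \emph{single} elementary automorphism, i.e.\ $au$ is elementary for some quadratic $a$; even this occupies three sections (stratifications, plus separate treatments of the uncountable-dimensional case and of the countable torsion and non-torsion cases). That gives $u=a^{-1}\cdot(\text{elementary})$, hence three involutions when there is no dominant eigenvalue and four in general.

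A secondary gap: your first ingredient, an infinite-dimensional Wonenburger--Djokovi\'c theorem obtained ``on each block of a Zorn-style decomposition into $u$-indecomposable pieces,'' is also unsupported --- the paper lists the infinite-dimensional converse (``conjugate to its inverse implies product of two involutions'') as an open problem precisely because no canonical decomposition is available. Fortunately you do not actually need that theorem: for a bilateral shift the conjugating map $\tau$ is an explicit involution and $s=\tau\cdot(\tau s)$ already gives the two involutions. So the entire weight of the proposal falls on the unproved two-shift factorization, which is where it breaks down.
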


We will also prove that three involutions suffice if $|\F| \leq 3$, but not otherwise,
and in general we will completely characterize the automorphisms that split into the product of three involutions
(see Theorem \ref{theo3invol} below).

Our methods actually allow a generalization of the above theorem, and a bit of additional definition will help.
We denote by $\F[t]$ the algebra of all polynomials with one indeterminate $t$
over $\F$. An element $x$ of an $\F$-algebra $\calA$ is called \textbf{quadratic}
whenever it is annihilated by some polynomial $p \in \F[t]$ with degree $2$.
A polynomial $p \in \F[t]$ will be called \textbf{non-derogatory} if $p(0) \neq 0$.
Note that any element of $\calA$ that is annihilated by such a polynomial
is invertible.

The involutions in $\calA$ are the quadratic elements that are annihilated by $t^2-1$.
An element of $\calA$ is called \textbf{unipotent with index $2$} whenever it is annihilated by $(t-1)^2$, i.e.\
when it reads $1_\calA+u$ for some $u \in \calA$ such that $u^2=0_\calA$.

Let $p_1,\dots,p_n$ be non-derogatory polynomials of $\F[t]$. An element $x$ of $\calA$ is called a
\textbf{$(p_1,\dots,p_n)$-product} when it splits into $x=a_1\cdots a_n$ for some list $(a_1,\dots,a_n) \in \calA^n$
such that $p_i(a_i)=0$ for all $i \in \lcro 1,n\rcro$. Note that such an element must then be invertible.
For example, an automorphism of $V$ is the product of $n$ involutions in $\End(V)$ if and only if
it is a $(t^2-1,\dots,t^2-1)$-product (with $n$ copies of $t^2-1$).
Note also that the set of all $(p_1,\dots,p_n)$-products is stable under conjugation. Noting that
$a_1^{-1}(a_1 \cdots a_n)a_1=a_2\cdots a_n a_1$ for any invertible elements $a_1,\dots,a_n$,
we deduce that the $(p_1,\dots,p_n)$-products are the $(p_2,\dots,p_n,p_1)$-products.

Now, we can state a generalization of Theorem \ref{theo4invol}:

\begin{theo}\label{theo4general}
Let $p_1,\dots,p_4$ be four split non-derogatory polynomials with degree $2$ in $\F[t]$,
and $V$ be an infinite-dimensional vector space. Then every automorphism of $V$ is a
$(p_1,p_2,p_3,p_4)$-product.
\end{theo}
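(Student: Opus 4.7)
The plan is to adapt the strategy of Theorem \ref{theo4invol} to the more general setting, in which each factor $a_i$ is merely required to be $p_i$-annihilated rather than to be an involution. The crucial observation is that $p_i$-annihilated elements carry essentially the same structural flexibility as involutions: if $p_i(t) = (t-\alpha_i)(t-\beta_i)$ with $\alpha_i \neq \beta_i$, then $a_i$ is determined by a direct-sum decomposition $V = W \oplus W'$, acting as $\alpha_i$ on $W$ and as $\beta_i$ on $W'$; if $p_i(t) = (t-\alpha_i)^2$, then $a_i = \alpha_i\, \id_V + u$ with $u^2 = 0$, a translate of a square-zero operator. In either case, the structural data specifying $a_i$ matches the data specifying an involution, so the combinatorial manipulations used in the proof of Theorem \ref{theo4invol} should transpose to this more general setting.

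I would first reduce the problem to a family of canonical ``basic'' automorphisms via a transfinite recursion on the dimension of $V$, as hinted by the paper's mention of ordinals. The idea is to decompose $V = \bigoplus_\lambda V_\lambda$ into $f$-invariant subspaces of controlled structure --- typically bilateral shifts on a $\Z$-indexed basis, together with a few other canonical blocks --- while any finite-dimensional residual components get absorbed into an infinite-dimensional neighbor using the flexibility of infinite direct sums. Since $p_i$-annihilation is preserved under direct sums, it will then suffice to produce a $(p_1,p_2,p_3,p_4)$-factorization of $f$ on each $V_\lambda$ separately.

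On each basic piece, I would construct the factorization by explicitly prescribing, for each $i$, a direct-sum decomposition of $V_\lambda$ into the $\alpha_i$- and $\beta_i$-eigenspaces of $a_i$ (or the analogous unipotent-index-$2$ structure in the double-root case), and then arranging these four decompositions so that the product $a_1 a_2 a_3 a_4$ equals the target automorphism on that piece. Because the set of $(p_1,p_2,p_3,p_4)$-products is conjugation-invariant, it is enough to exhibit one representative in each relevant conjugacy class.

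The hard part will be this explicit construction on the basic pieces. The analogous finite-dimensional statement fails in general because of the determinantal constraint $\prod_i \alpha_i^{d_i} \beta_i^{\dim V - d_i} = \det f$, where $d_i = \dim \ker(a_i - \alpha_i)$; the proof must therefore genuinely exploit the infinite-dimensional flexibility, for instance by letting the $\alpha_i$- and $\beta_i$-eigenspaces of each $a_i$ have matching infinite dimensions and ``spreading the excess'' between them. A secondary difficulty will be a uniform treatment of all the subcases --- distinct versus repeated roots for each $p_i$, and the characteristic-$2$ situation in which the parametrization $a = \tfrac{\alpha+\beta}{2} + \tfrac{\alpha-\beta}{2}\sigma$ of a $p$-annihilated element via an involution breaks down --- without descending into an unwieldy case analysis.
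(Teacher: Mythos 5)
There is a genuine gap, and it sits exactly where you flag ``the hard part'': your plan hinges on decomposing $V=\bigoplus_\lambda V_\lambda$ into $f$-invariant ``basic pieces of controlled structure'' (bilateral shifts plus a few canonical blocks). No such decomposition exists in general --- the paper itself stresses the absence of any canonical form for endomorphisms of infinite-dimensional spaces, and Example \ref{exampleofnogoodstrat} exhibits an automorphism with no dominant eigenvalue whose module $V^u$ does not even admit a semi-good stratification, let alone a splitting into shift-like summands. An automorphism with a dominant eigenvalue (e.g.\ $\lambda\,\id_V$ with $\lambda$ not acceptable) has no invariant subspace on which it restricts to anything shift-like, so your reduction cannot even start there; yet these are precisely the automorphisms for which four factors are needed rather than three. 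Once the decomposition step is removed, nothing in the proposal produces the factorization.

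The paper's actual route is structurally different and worth internalizing: it does not decompose $u$, but instead \emph{multiplies} $u$ by a single well-chosen quadratic automorphism $a$ so that the product becomes elementary (i.e.\ $V^{au}$ is a free $\F[t,t^{-1}]$-module), and then factors the elementary automorphism into two quadratic factors (Theorem \ref{theo2}). The construction of $a$ is adapted to a stratification of $V^u$ (or, in the countable non-torsion case, to a maximal free family), and $a$ deliberately does \emph{not} preserve the strata --- it links them together to create the bilateral shifts. This gives Theorem \ref{theo3} (three factors, no dominant eigenvalue), and Theorem \ref{theo4general} then follows by a short reduction: if $u$ has a dominant eigenvalue, Lemma \ref{4to3lemma} uses the first factor (a direct sum of $2\times 2$ cyclic blocks annihilated by $p_1^\sharp$) to destroy that eigenvalue, after which Theorem \ref{theo3} applies. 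Your observation that $p_i$-annihilated elements are parametrized like involutions is fine as far as it goes, but the missing content is the adjacency-to-elementary mechanism, not the bookkeeping of eigenvalue splittings.
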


In particular, every automorphism of $V$ is the product of four involutions, but also of four
unipotent automorphisms with index $2$, and also of three involutions and one unipotent automorphism with index $2$.
And so on. The corresponding result for decompositions into sums was proved recently \cite{Breaz,dSPidempotentinfinite1,Shitov}.

Next, we will see that three factors do not suffice in general, and we will almost completely classify
the $(p_1,p_2,p_3)$-products. The main obstruction for being a $(p_1,p_2,p_3)$-product comes
from the existence of a ``large" eigenvalue:

\begin{Def}
Let $u \in \End(V)$.
A scalar $\lambda$ is called a \textbf{dominant eigenvalue} of $u$ if
$\rk(u-\lambda\,\id_V)<\dim V$.
\end{Def}

In that case, $\lambda$ is truly an eigenvalue of $u$, i.e.\ $\Ker(u-\lambda \,\id_V) \neq \{0\}$,
and it is the sole dominant eigenvalue of $u$ (indeed, given distinct scalars $\alpha$ and $\beta$ in $\F$, the equality
$(\beta-\alpha)\,\id_V=(u-\alpha\,\id_V)-(u-\beta\,\id_V)$ yields $V=\im(u-\alpha\id_V)+\im(u-\beta\id_V)$, and hence
$\dim V \leq \rk(u-\alpha\id_V)+\rk(u-\beta \id_V)$ ; here $V$ has infinite dimension and hence at least one of
$u-\alpha \id_V$ and $u-\beta \id_V$ has rank $\dim V$).

A mundane example is the one where $u=\lambda \,\id_V$.
Note that if $u$ has a dominant eigenvalue $\lambda$ and $u$ is an automorphism of $V$ then $\lambda$ is non-zero.

As we are about to see, there are limitations on the possible dominant eigenvalues of a $(p_1,p_2,p_3)$-product.
To formulate such limitations, recall that, for a  monic polynomial $p \in \F[t]$ with degree $n$,
its norm is defined as $(-1)^n p(0)$, and its trace is defined as the opposite of the coefficient of $p$ on $t^{n-1}$.
The \textbf{norm} and \textbf{trace} of a nonzero polynomial $p$ with leading coefficient $\alpha$ are defined as those of
$\alpha^{-1}p$, and denoted by $N(p)$ and $\tr(p)$, respectively.

\begin{theo}\label{dominanteigenvalueCN}
Let $V$ be an infinite-dimensional vector space.
Let $p_1,p_2,p_3$ be split non-derogatory polynomials of $\F[t]$ with degree $2$.
Let $u \in \GL(V)$ have a dominant eigenvalue $\lambda$, and
assume that $u$ is a $(p_1,p_2,p_3)$-product.
Then $\lambda$ is a $(p_1,p_2,p_3)$-product in the algebra $\F$ or $\lambda^2=N(p_1)N(p_2)N(p_3)$.
\end{theo}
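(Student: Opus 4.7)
The plan is to introduce the abstract $\F$-algebra $\calA$ presented by three generators $x_1, x_2, x_3$ subject to the relations $p_i(x_i)=0$ (for $i=1,2,3$) and $x_1 x_2 x_3 = \lambda$, and to show that the two alternatives of the theorem can be read off from any simple $\calA \otimes_\F \overline{\F}$-module. The proof splits into three points: (a) $\calA$ is finite-dimensional with $\dim_\F \calA \leq 4$; (b) $\calA$ is nonzero, so $\calA \otimes \overline{\F}$ admits a simple module $S$ of $\overline{\F}$-dimension $1$ or $2$; (c) on $S$, the relation $\bar a_1 \bar a_2 \bar a_3 = \lambda\,\id_S$ together with $p_i(\bar a_i)=0$ forces one of the two alternatives.

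For (a), writing $p_i(t)=(t-\alpha_i)(t-\beta_i)$ with $\alpha_i,\beta_i \in \F^*$, the identity $x_3 = \lambda (x_1 x_2)^{-1}$ converts $p_3(x_3)=0$ into a quadratic relation $q(x_1 x_2)=0$ with $q(t) = (t - \lambda \alpha_3^{-1})(t - \lambda \beta_3^{-1})$. Using $p_1, p_2$ to reduce $x_1^2, x_2^2$ to degree $\leq 1$ and $q$ to invert $x_1 x_2$, a direct word reduction shows that $\{1, x_1, x_2, x_1 x_2\}$ spans $\calA$ (the element $x_2 x_1$ is recovered from $(x_1 x_2)(x_2 x_1) = x_1 x_2^2 x_1$ which reduces to $\Vect(1, x_1, x_2)$, then left-multiplying by $(x_1 x_2)^{-1} \in \Vect(1, x_1 x_2)$). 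So $\dim_{\overline{\F}}(\calA \otimes \overline{\F}) \leq 4$, and by Artin--Wedderburn applied to its semisimple quotient any simple module over $\overline{\F}$ has dimension at most $2$.

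For (b), the dominant-eigenvalue hypothesis enters decisively. The $\F$-algebra $\calB$ presented by the same generators but only the relations $p_i(x_i)=0$ acts on $V$ via $x_i \mapsto a_i$, and $\calA$ is the quotient $\calB / \calJ$ where $\calJ$ is the two-sided ideal generated by $x_1 x_2 x_3 - \lambda$. If $\calA$ were zero, then $1 \in \calJ$, so writing $1 = \sum_{j=1}^{N} r_j (x_1 x_2 x_3 - \lambda) s_j$ in $\calB$ would yield upon evaluation on $V$ the identity
\[
\id_V \;=\; \sum_{j=1}^{N} r_j(a)\,(u - \lambda\,\id_V)\,s_j(a),
\]
a finite sum of operators each of rank at most $\rk(u - \lambda\,\id_V) < \dim V$, so $\id_V$ would have rank less than $\dim V$---a contradiction. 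Hence $\calA \neq 0$, so $\calA \otimes \overline{\F}$ admits a simple module $S$ with $\dim_{\overline{\F}} S \in \{1, 2\}$.

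For (c), on $S$ the images $\bar a_i$ satisfy $p_i(\bar a_i)=0$ and $\bar a_1 \bar a_2 \bar a_3 = \lambda\,\id_S$. If $\dim S = 1$, each $\bar a_i$ is a scalar root of $p_i$ which automatically lies in $\F$ (as $p_i$ is split over $\F$), so $\lambda$ is a $(p_1,p_2,p_3)$-product in $\F$. If $\dim S = 2$ and every $\bar a_i$ is non-scalar, then the minimal polynomial of $\bar a_i$ equals $p_i$ and is therefore also its characteristic polynomial, so $\det \bar a_i = N(p_i)$ and taking determinants of $\bar a_1 \bar a_2 \bar a_3 = \lambda\,\id_S$ gives $\lambda^2 = N(p_1) N(p_2) N(p_3)$. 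When $\dim S = 2$ but some $\bar a_i$ is scalar, say $\bar a_1 = c_1\,\id_S$, then $\bar a_3 = (\lambda/c_1)\,\bar a_2^{-1}$ is a polynomial in $\bar a_2$, and the requirement $p_3(\bar a_3)=0$---treated separately according to whether $\bar a_2$ has two distinct eigenvalues or is a single Jordan block---forces $\lambda = c_1 c_2 c_3$ with each $c_i$ a root of $p_i$, returning to the first alternative. The main obstacle I anticipate is the combinatorial verification of the spanning bound in (a); after that, the non-vanishing argument of (b) is the pivotal use of the dominant-eigenvalue hypothesis, and (c) is bounded finite-dimensional linear algebra.
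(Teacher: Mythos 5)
Your argument is correct in substance but takes a genuinely different route from the paper's. The paper first proves an Invariant Subspace Lemma (Lemma \ref{invariantsubspacelemma}): from $abc=\lambda\,\id_V+w$ with $a,b,c$ quadratic it constructs, by explicit manipulation of words in $a,b,c$, a subspace $\overline{W}\supseteq \im w$ with $\dim\overline{W}\leq 8\dim(\im w)<\dim V$ that is stable under $a,b,c$; passing to $V/\overline{W}$ reduces everything to the scalar case $\lambda\,\id$, which is then handled (Proposition \ref{homothetieprop2}) by showing via the commutation identities of Lemmas \ref{commutelemma1} and \ref{commutelemma2} that $a,b,c$ commute whenever $\lambda^2\neq N(p_1)N(p_2)N(p_3)$, and extracting a common eigenvector. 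You instead pass to the universal algebra: your observation that every element of the relation ideal $\calJ$ evaluates on $V$ to a finite sum of operators of rank at most $\rk(u-\lambda\,\id_V)<\dim V$, so that $1\notin\calJ$ and $\calA\neq 0$, is a clean substitute for the Invariant Subspace Lemma, and the bound $\dim\calA\leq 4$ plus a simple module over $\overline{\F}$ replaces the commuting-eigenvector analysis. For this particular theorem your proof is arguably more conceptual; note, however, that the paper reuses the Invariant Subspace Lemma elsewhere (Theorem \ref{theofiniterank} and the counterexample closing Section \ref{finiteranksection}), which your argument would not supply.

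One step needs repair. In (a), your justification that $x_2x_1\in\Vect(1,x_1,x_2,x_1x_2)$ is circular: reducing $x_1x_2^2x_1$ produces the word $x_1x_2x_1=x_1\cdot(x_2x_1)$, whose reduction presupposes the very membership being proved. The claim is nevertheless true and easy to fix: since $x_i^{-1}\in\Vect(1,x_i)$ and $(x_1x_2)^{-1}\in\Vect(1,x_1x_2)$ (the constant term of $q$ is $\lambda^2 N(p_3)^{-1}\neq 0$ because $\lambda\neq 0$), expanding the identity $x_2^{-1}x_1^{-1}=(x_1x_2)^{-1}$ exhibits $x_2x_1$ explicitly as a combination of $1,x_1,x_2,x_1x_2$. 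With that in hand, one checks directly that $\Vect(1,x_1,x_2,x_1x_2)$ is stable under left multiplication by $x_1$ and $x_2$, hence equals $\calA$. Parts (b) and (c) are sound as written (in (c), the cases where $\bar a_2$ or $\bar a_3$ is scalar are symmetric to the case $\bar a_1=c_1\,\id_S$ that you treat).
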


\begin{Def}
Let $p_1,p_2,p_3$ be split non-derogatory polynomials with degree $2$ over $\F$.
A scalar $\lambda \in \F$ is called \textbf{acceptable} for the triple $(p_1,p_2,p_3)$ when
it is a $(p_1,p_2,p_3)$-product in the algebra $\F$ or $\lambda^2=N(p_1)N(p_2)N(p_3)$.
\end{Def}

For example, in the case of products of three involutions, the acceptable scalars are the fourth roots of the unity.

In particular, whenever $|\F|>5$ or $|\F|=4$, there exists $\lambda \in \F \setminus \{0\}$ such that $\lambda^4 \neq 1$,
and hence $\lambda \id_V$ is not the product of three involutions.

\vskip 3mm
In contrast, if $u$ has no dominant eigenvalue, then everything works fine for decompositions into three factors. This is the major result
of the present article:

\begin{theo}\label{theo3}
Let $V$ be an infinite-dimensional vector space, and let $p_1,p_2,p_3$ be split non-derogatory polynomials
of $\F[t]$ with degree $2$.
Let $u$ be an automorphism of $V$ with no dominant eigenvalue. Then $u$ is a $(p_1,p_2,p_3)$-product.
\end{theo}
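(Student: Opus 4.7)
The plan is to build $v_1, v_2, v_3 \in \End(V)$ with $p_i(v_i) = 0$ and $u = v_1 v_2 v_3$ by combining an invariant-subspace reduction with an explicit model-case construction. Write each $p_i(t) = (t - a_i)(t - b_i)$ with $a_i, b_i \in \F \setminus \{0\}$; the operator $v$ satisfies $p_i(v) = 0$ precisely when $V = \Ker(v - a_i\,\id_V) \oplus \Ker(v - b_i\,\id_V)$. Thus, finding a $(p_1, p_2, p_3)$-factorization of $u$ amounts to producing three such splittings of $V$ whose associated operators compose to $u$.

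First I would exploit the no-dominant-eigenvalue hypothesis --- which yields $\rk(u - \lambda\,\id_V) = \dim V$ for every $\lambda \in \F$ --- to decompose $V = \bigoplus_{j \in J} V_j$ into $u$-stable subspaces on each of which $u|_{V_j}$ takes a canonical form. The keyword ``bilateral shift'' signals that the canonical pieces ought to be countable-dimensional bilateral shifts, possibly twisted by scalar weights; the hypothesis guarantees that no piece is forced into a pure scalar shape and that there are ``enough'' infinite orbits to reach such a decomposition. On each model piece I would realize $u|_{V_j}$ explicitly as a $(p_1, p_2, p_3)$-product by a block construction: pair up basis vectors into $2$-dimensional blocks and specify each $v_i^{(j)}$ block by block, choosing the blocks to have characteristic polynomial $p_i$ and arranging them so that the composite $v_1^{(j)} v_2^{(j)} v_3^{(j)}$ realizes the shift action of $u|_{V_j}$. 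Setting $v_i := \bigoplus_j v_i^{(j)}$ yields global operators that inherit the annihilating polynomial $p_i$ from their summands and that satisfy $u = v_1 v_2 v_3$ by construction.

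The main obstacle is the model-case construction. For arbitrary split non-derogatory $p_1, p_2, p_3$ of degree $2$, one must find $2 \times 2$ matrices --- or, if necessary, larger blocks chosen so that a suitable fundamental domain of the shift is covered --- with characteristic polynomials $p_1, p_2, p_3$ whose product realizes the one-step shift on that domain. This reduces to a small, concrete system of polynomial equations over $\F$ whose solvability follows from the freedom one has among the eight possible scalar products $c_1 c_2 c_3$ with $c_i \in \{a_i, b_i\}$ and from the invertibility of all the $a_i, b_i$. A secondary obstacle is the structural reduction itself: one must produce from ``no dominant eigenvalue'' a decomposition into bilateral-shift pieces rather than, say, a proliferation of finite cyclic pieces or unilateral shifts, which would lack the required two-sided flexibility. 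This is precisely the point where Theorem \ref{dominanteigenvalueCN} marks the boundary: in the presence of a dominant eigenvalue the decomposition would be forced to contain a large near-scalar piece on which the block construction would typically fail, consistent with the obstruction recorded there.
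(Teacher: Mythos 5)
The proposal breaks down at the ``structural reduction'' step, and the gap is not repairable along the lines you sketch. The hypothesis that $u$ has no dominant eigenvalue does \emph{not} yield a decomposition of $V$ into $u$-stable pieces on which $u$ acts as a (twisted) bilateral shift: it only says that $\rk(u-\lambda\id_V)=\dim V$ for every $\lambda$, which is compatible with $V^u$ being a \emph{pure torsion} $\F[t,t^{-1}]$-module containing no shift whatsoever. For instance, take $V^u=\bigoplus_{n\geq 1}\F[t]/((t-1)^n)$ (a single Jordan block of each size, over any field): then $u-\id_V$ has infinite rank and $u-\lambda\id_V$ is injective for $\lambda\neq 1$, so $u$ has no dominant eigenvalue, yet every element of $V^u$ is torsion. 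Your plan then has to realize such a $u$ directly by a block construction, and this fails: pairing basis vectors into $2$-dimensional blocks forces each block of the product to have determinant $N(p_1)N(p_2)N(p_3)$, a single fixed scalar that the $2\times 2$ blocks of $u$ need not have; enlarging the blocks leads straight back to the finite-dimensional three-factor problem, which the paper emphasizes has no tractable classification. The ``freedom among the eight scalar products $c_1c_2c_3$'' is nowhere near enough. (A secondary slip: for $p_i=(t-1)^2$, which is allowed here, $p_i(v)=0$ is not equivalent to a splitting of $V$ into two eigenspaces; your description of the factors only covers the case of distinct roots.)

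The paper's strategy is essentially the opposite of yours: instead of decomposing $u$ into shifts, it \emph{spends one of the three factors to turn $u$ into a shift}. Concretely, Theorem \ref{adjacencytheo} produces $a$ with $p_1^\sharp(a)=0$ such that $v:=au$ is \emph{elementary}, i.e.\ $V^v$ is a free $\F[t,t^{-1}]$-module; this is the hard core of the paper, proved via ``semi-good stratifications'' of $V^u$ in the uncountable-dimensional case and a separate torsion/non-torsion analysis in the countable case. Only then does a block construction of the kind you envisage appear (Theorem \ref{theo2} and Proposition \ref{theo2superelem}), and it writes the bilateral shift as a product of just \emph{two} quadratic automorphisms, since the first factor has already been used up in the adjacency step. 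If you want to salvage your approach, the missing idea is precisely this adjacency mechanism: use one quadratic factor to modify $u$ so that the resulting automorphism generates a free module, rather than hoping $u$ already has that structure.
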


There is also a quasi-converse statement for Theorem \ref{dominanteigenvalueCN}:

\begin{theo}\label{dominanteigenvalueCS}
Let $V$ be an infinite-dimensional vector space, and let $p_1,p_2,p_3$ be split non-derogatory polynomials
of $\F[t]$ with degree $2$.
Let $u$ be an automorphism of $V$. Assume that $u$ has a dominant eigenvalue $\lambda$ that is acceptable for $(p_1,p_2,p_3)$
and such that $u-\lambda\,\id_V$ does not have finite rank.
Then $u$ is a $(p_1,p_2,p_3)$-product.
\end{theo}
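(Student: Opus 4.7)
The plan is to decompose $V = U_1 \oplus W$ in $u$-stable fashion, with $U_1$ sitting inside the $\lambda$-eigenspace of $u$ and having dimension $\dim V$, while $W$ is infinite-dimensional and $u|_W$ has no dominant eigenvalue. Theorem \ref{theo3} will then factor $u|_W$, and the acceptability of $\lambda$ will factor $\lambda\,\id_{U_1}$; taking direct sums of matching factors closes the argument.

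To set up the decomposition, let $K := \Ker(u - \lambda\,\id_V)$ and $I := \im(u - \lambda\,\id_V)$, so that $\dim K = \dim V$ and $\dim I = r$, where $r := \rk(u - \lambda\,\id_V)$ is infinite and strictly less than $\dim V$. I would pick a complement $V_0$ of $K$ in $V$ (of dimension $r$) and set $V_2 := V_0 + I$. This $V_2$ is $u$-stable by inspection and has dimension $r$; it is also $u^{-1}$-stable, since the identity $u^{-1}(v) = \lambda^{-1} v - \lambda^{-1}(u - \lambda\,\id_V)(u^{-1}(v))$ places $u^{-1}(v)$ in $\F v + I$ for every $v \in V$, whence $u^{-1}(V_2) \subseteq V_2$. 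Since $V = K + V_2$, any complement $U$ of $K \cap V_2$ inside $K$ yields $V = U \oplus V_2$ with $U \subseteq K$ and $\dim U = \dim(V/V_2) = \dim V$. Splitting further $U = U_0 \oplus U_1$ with $\dim U_0 = r$ and $\dim U_1 = \dim V$, and setting $W := U_0 \oplus V_2$, produces $V = U_1 \oplus W$ with both summands $u$-stable, $u|_{U_1} = \lambda\,\id_{U_1}$, and $\dim W = r$.

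Next, I would check that $u|_W$ has no dominant eigenvalue. Writing $u|_W = \lambda\,\id_{U_0} \oplus u|_{V_2}$, one has $\rk((u - \mu\,\id)|_W) = \rk((\lambda - \mu)\,\id_{U_0}) + \rk((u - \mu\,\id)|_{V_2})$ for every $\mu \in \F$. If $\mu \neq \lambda$, the first summand alone contributes rank $r = \dim W$; if $\mu = \lambda$, the image of $(u - \lambda\,\id)|_{V_2}$ equals $I$ (since $V_0 \subseteq V_2$ surjects onto $I$ under $u - \lambda\,\id_V$), again giving rank $r$. Hence Theorem \ref{theo3} applies to $u|_W$ and produces $b_1, b_2, b_3 \in \GL(W)$ with $p_i(b_i) = 0$ and $b_1 b_2 b_3 = u|_W$.

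It remains to factor $\lambda\,\id_{U_1}$, and this is where acceptability enters. If $\lambda = \alpha_1 \alpha_2 \alpha_3$ with $p_i(\alpha_i) = 0$, the scalars $c_i := \alpha_i\,\id_{U_1}$ do the job. Otherwise $\lambda^2 = N(p_1) N(p_2) N(p_3)$, and I would invoke the finite-dimensional lemma furnishing $A_1, A_2, A_3 \in \Mat_2(\F)$ with $p_i(A_i) = 0$ and $A_1 A_2 A_3 = \lambda I_2$, then use these blockwise along any pairing of a basis of $U_1$ (possible since $\dim U_1$ is infinite). Setting $a_i := c_i \oplus b_i$ on $V = U_1 \oplus W$ yields $p_i(a_i) = 0$ and $a_1 a_2 a_3 = \lambda\,\id_{U_1} \oplus u|_W = u$. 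The hard part will be this $2$-dimensional factorization lemma in the non-scalar case, which should be a stand-alone finite-dimensional result established earlier in the paper; everything else is dimension bookkeeping plus one application of Theorem \ref{theo3}.
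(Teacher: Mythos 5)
Your proof is correct, and the overall strategy coincides with the paper's: split $V$ into a $u$-stable direct sum of an infinite-dimensional piece on which $u$ acts as $\lambda\,\id$ (handled by acceptability via Proposition \ref{homothetieprop1}, whose proof contains exactly the $2\times 2$ factorization lemma you anticipate needing) and an infinite-dimensional piece on which $u$ has no dominant eigenvalue (handled by Theorem \ref{theo3}), then recombine with Remark \ref{directsumremark}. The one genuine difference is how the splitting is obtained: the paper simply invokes the Reduction Lemma (Lemma \ref{reductionlemma}, imported from \cite{dSPidempotentinfinite2}), whereas you construct the decomposition from scratch via $V_2 := V_0 + \im(u-\lambda\,\id_V)$ with $V_0$ a complement of $\Ker(u-\lambda\,\id_V)$, checking $u^{\pm 1}$-stability and then padding with a piece $U_0$ of the kernel of dimension $r=\rk(u-\lambda\,\id_V)$ so that $\lambda$ ceases to be dominant on $W=U_0\oplus V_2$. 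Your construction is sound (all the dimension counts and stability checks go through, and the padding by $U_0$ is precisely what kills the dominance of $\lambda$ on $W$), and it has the merit of being self-contained and of guaranteeing explicitly that the scalar summand $U_1$ has dimension $\dim V$, which is what legitimizes the application of the infinite-dimensional scalar case; the paper's route is shorter but leans on an external lemma.
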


Hence, in order to have a full characterization of the $(p_1,p_2,p_3)$-products in $\GL(V)$, it remains
to understand when an automorphism which is the sum of $\lambda \id_V$
with a finite-rank endomorphism is a $(p_1,p_2,p_3)$-product. We will show in Section \ref{finiteranksection} that
this amounts to determine, given a scalar $\lambda$ that is acceptable for $(p_1,p_2,p_3)$,
for which square matrices $A \in \Mat_n(\F)$ there exists an integer $q \geq 0$ such that $A \oplus \lambda I_q$ is a
$(p_1,p_2,p_3)$-product: it turns out that this problem is open for general values of $(p_1,p_2,p_3)$.
Nevertheless, for a few specific values of the triple $(p_1,p_2,p_3)$, it has recently been solved (see \cite{dSP3involutions}).
This leads to the following three results, which encompass all decompositions that involve only
involutions and unipotent automorphisms with index $2$:

 \begin{theo}\label{theo3invol}
Let $V$ be an infinite-dimensional vector space.
Let $u \in \GL(V)$. Then $u$ is the product of three involutions if and only if none of the following conditions holds:
\begin{enumerate}[(i)]
\item There is a scalar $\lambda$ such that $\rk(u-\lambda \id_V)<\dim V$ and $\lambda^4 \neq 1$.
\item There is a scalar $\lambda$ such that $u-\lambda \id_V$ has finite rank, $\lambda^4=1$ and the determinant of the
automorphism of $\im(u-\lambda \id_V)$ induced by $u$ does not belong to
$\{\varepsilon \lambda^k \mid k \in \N, \; \varepsilon \in \{-1,1\}\}$.
\end{enumerate}
Moreover, $u$ is the product of three involutions if and only if it is the product of one involution
and two unipotent automorphisms with index $2$.
\end{theo}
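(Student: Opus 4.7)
The plan is to prove necessity of conditions (i) and (ii), then sufficiency by a three-case analysis, and finally to deduce the equivalent decomposition via a parallel argument.

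For \emph{necessity}, suppose $u=s_1s_2s_3$ where each $s_i$ is an involution. Applying Theorem \ref{dominanteigenvalueCN} with $p_1=p_2=p_3=t^2-1$: since $N(t^2-1)=-1$ and the $(p_1,p_2,p_3)$-products in the algebra $\F$ lie in $\{-1,1\}$, any dominant eigenvalue $\lambda$ of $u$ satisfies $\lambda\in\{\pm 1\}$ or $\lambda^2=-1$; in either case $\lambda^4=1$, ruling out (i). Condition (ii) is ruled out by invoking the finite-rank reduction announced in Section \ref{finiteranksection}: when $u-\lambda\,\id_V$ has finite rank with $\lambda^4=1$, the question of whether $u$ is a product of three involutions reduces to one of the form ``is $A\oplus \lambda I_q$ a product of three involutions for some $q\geq 0$?'', which is governed by the determinant-orbit criterion from \cite{dSP3involutions}.

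For \emph{sufficiency}, assume that neither (i) nor (ii) holds, and split into three cases. If $u$ has no dominant eigenvalue, Theorem \ref{theo3} applied to $(t^2-1,t^2-1,t^2-1)$ yields the desired decomposition at once. Otherwise, let $\lambda$ be the unique dominant eigenvalue; the failure of (i) forces $\lambda^4=1$, so $\lambda$ is acceptable for $(t^2-1,t^2-1,t^2-1)$. If $u-\lambda\,\id_V$ does not have finite rank, Theorem \ref{dominanteigenvalueCS} produces the decomposition. Otherwise $u-\lambda\,\id_V$ has finite rank, and the failure of (ii) is precisely the input needed to apply the finite-rank reduction of Section \ref{finiteranksection} in conjunction with the finite-dimensional classification of \cite{dSP3involutions}.

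The equivalence between ``product of three involutions'' and ``product of one involution and two unipotent automorphisms of index $2$'' follows by running the whole argument in parallel with the triple $(t^2-1,(t-1)^2,(t-1)^2)$. One checks that $N(t^2-1)=-1$ and $N((t-1)^2)=1$, hence $N(p_1)N(p_2)N(p_3)=-1$; moreover the involutions in $\F$ are $\pm 1$ and the index-$2$ unipotents in $\F$ reduce to $1$, so the $(p_1,p_2,p_3)$-products in $\F$ are again $\{-1,1\}$. Thus the set of acceptable scalars---the fourth roots of unity---coincides for the two triples, and the three-case sufficiency analysis transfers verbatim. The matching of the finite-rank obstruction is again supplied by \cite{dSP3involutions}, which treats both triples simultaneously.

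The hard part will be condition (ii). The rest reduces smoothly to the cited theorems \ref{dominanteigenvalueCN}, \ref{theo3} and \ref{dominanteigenvalueCS}, but matching the negation of (ii) with the exact finite-dimensional criterion for three-involution decomposability requires a careful reduction: one must decompose $V$ into a ``large'' $\lambda$-eigenspace and a finite-dimensional complement carrying all the non-trivial action of $u$, then translate the criterion of \cite{dSP3involutions}---phrased for matrices $A\oplus\lambda I_q$---into the determinant-orbit statement of (ii). The bulk of the writing-up consists in assembling these external ingredients coherently.
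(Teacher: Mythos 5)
Your proposal is correct and follows exactly the paper's route: the paper derives Theorem \ref{theo3invol} by combining Theorems \ref{dominanteigenvalueCN}, \ref{theo3}, \ref{dominanteigenvalueCS} and \ref{theofiniterank} with the stable classification from \cite{dSP3involutions} (whose first cited theorem indeed handles the triples $(t^2-1,t^2-1,t^2-1)$ and $(t^2-1,(t-1)^2,(t-1)^2)$ simultaneously, giving the final equivalence). Your case split and the translation of the $\det(A\oplus\lambda I_q)$ criterion into the determinant condition of (ii) (using that $u$ acts as $\lambda\,\id$ on $W/\im(u-\lambda\,\id_V)$ and that $\lambda^{4}=1$) are exactly the intended assembly.
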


\begin{cor}
If $|\F| \leq 3$ then every automorphism of an infinite-dimensional vector space over $\F$
is the product of three involutions.
\end{cor}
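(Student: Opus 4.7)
The plan is to reduce directly to Theorem \ref{theo3invol} by verifying that, when $|\F| \leq 3$, neither of the two obstructions (i) and (ii) can occur for any automorphism $u$ of $V$. The whole argument rests on one arithmetic observation: when $|\F| \leq 3$, the multiplicative group $\F \setminus \{0\}$ has order at most $2$, so every non-zero scalar $\lambda \in \F$ satisfies $\lambda^2 = 1$, and in particular $\lambda^4 = 1$.

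First I would treat obstruction (i). Suppose $\lambda \in \F$ satisfies $\rk(u - \lambda\,\id_V) < \dim V$. Then $\lambda \neq 0$: otherwise $u$ itself would have rank strictly less than $\dim V$, contradicting the assumption that $u$ is an automorphism of the infinite-dimensional space $V$. Hence $\lambda \in \F \setminus \{0\}$, and the observation above gives $\lambda^4 = 1$. So (i) cannot hold.

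Next I would treat obstruction (ii). If $u - \lambda\,\id_V$ has finite rank then, since $V$ is infinite-dimensional and $u$ is invertible, the same reasoning forces $\lambda \neq 0$, and consequently $\lambda^2 = 1$. The set $\{\varepsilon \lambda^k \mid k \in \N,\ \varepsilon \in \{-1, 1\}\}$ contains both $1$ and $-1$, which already exhaust $\F \setminus \{0\}$ when $|\F| \leq 3$. The determinant of the automorphism induced by $u$ on $\im(u - \lambda\,\id_V)$ is a non-zero scalar of $\F$, hence it lies in this set. So (ii) also fails.

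Since neither obstruction from Theorem \ref{theo3invol} occurs, $u$ is the product of three involutions. There is no real obstacle in this argument: the corollary is immediate from Theorem \ref{theo3invol} together with the smallness of $\F^\times$, and the only point deserving attention is ruling out $\lambda = 0$ in both conditions, which is forced by $u \in \GL(V)$ and $\dim V$ being infinite.
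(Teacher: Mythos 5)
Your proof is correct and is exactly the argument the paper intends (the paper leaves the corollary unproved as an immediate consequence of Theorem \ref{theo3invol}): when $|\F|\leq 3$ every nonzero scalar squares to $1$, so obstruction (i) is vacuous, and the determinant in (ii) is necessarily $\pm 1$, which lies in the allowed set. Your care in ruling out $\lambda=0$ via invertibility of $u$ and infinite-dimensionality of $V$ is the right touch.
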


In the finite-dimensional case, it is known that the product of two unipotent automorphisms with index $2$
is always the product of two involutions (but the converse fails), see \cite{dSPproductquad}. To our knowledge,
the validity of this statement in the infinite-dimensional setting remains an open problem.

\begin{theo}\label{theo3unipotent}
Let $V$ be an infinite-dimensional vector space.
Let $u \in \GL(V)$. Then $u$ is the product of three unipotent automorphisms with index $2$ if and only if none of the following conditions holds:
\begin{enumerate}[(i)]
\item There is a scalar $\lambda$ such that $\rk(u-\lambda \id_V)<\dim V$ and $\lambda^2 \neq 1$.
\item The endomorphism $u-\id_V$ has finite rank and the determinant of the automorphism of $\im(u-\id_V)$ induced by $u$ is not $1$.
\item The endomorphism $u+\id_V$ has finite rank and the determinant of the automorphism of $\im(u+\id_V)$ induced by $u$ is neither $1$ nor $-1$.
\end{enumerate}
\end{theo}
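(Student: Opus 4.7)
The plan is to align the three exclusion conditions of the theorem with the obstructions already understood through Theorems \ref{dominanteigenvalueCN}, \ref{theo3} and \ref{dominanteigenvalueCS}, and to reserve for last the most delicate case, namely when $u-\lambda\,\id_V$ has finite rank for some $\lambda\in\{-1,1\}$. First I would specialize the general framework to $(p_1,p_2,p_3)=\bigl((t-1)^2,(t-1)^2,(t-1)^2\bigr)$: one has $N((t-1)^2)=1$, hence $N(p_1)N(p_2)N(p_3)=1$, and since the only element of $\F$ annihilated by $(t-1)^2$ is $1$, the acceptable scalars for the triple are exactly $1$ and $-1$. Theorem \ref{dominanteigenvalueCN} then immediately yields the necessity of condition (i), because any dominant eigenvalue $\lambda$ of a $(p_1,p_2,p_3)$-product must satisfy $\lambda^2=1$.

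For the sufficiency direction, assume that conditions (i), (ii) and (iii) all fail. If $u$ has no dominant eigenvalue, Theorem \ref{theo3} produces the desired factorization. Otherwise $u$ has a unique dominant eigenvalue $\lambda\in\{-1,1\}$; if $u-\lambda\,\id_V$ has infinite rank, Theorem \ref{dominanteigenvalueCS} concludes, since $\lambda$ is acceptable. The only remaining case is when $u-\lambda\,\id_V$ has finite rank with $\lambda\in\{-1,1\}$, which is precisely the situation regulated by conditions (ii) and (iii) of the theorem.

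To treat this last case in both directions simultaneously, I would apply the reduction procedure from Section \ref{finiteranksection}: one shows that $u$ is a $(p_1,p_2,p_3)$-product in $\End(V)$ if and only if there exists a finite-dimensional $u$-invariant subspace $W$ containing $\im(u-\lambda\,\id_V)$ and an integer $q\geq 0$ such that $u|_W\oplus\lambda I_q$ is a $(p_1,p_2,p_3)$-product in $\Mat_{\dim W+q}(\F)$. Any unipotent automorphism of index $2$ in a finite-dimensional space has determinant $1$, so the determinant of $u|_W\oplus\lambda I_q$ must equal $1$, which translates into $\det(u|_{\im(u-\lambda\,\id_V)})=\lambda^q$. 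For $\lambda=1$ this forces that determinant to be $1$ regardless of $q$ (giving the necessity of negating (ii)), whereas for $\lambda=-1$ we may freely choose the parity of $q$, so the determinant condition becomes $\det(u|_{\im(u+\id_V)})\in\{-1,1\}$ (giving the necessity of negating (iii)). Combining these constraints with the matrix-level characterization of products of three unipotent automorphisms of index $2$ established in \cite{dSP3involutions} yields the converse implication as well. The main obstacle is precisely this last step: one has to check that the criterion of \cite{dSP3involutions} is elastic enough to deliver, for a suitable choice of $q$, an actual $(p_1,p_2,p_3)$-decomposition of $u|_W\oplus\lambda I_q$ as soon as the determinantal condition inherited from (ii) or (iii) holds, and conversely that no obstruction beyond this determinant survives after the reduction.
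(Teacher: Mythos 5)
Your proposal is correct and follows essentially the same route as the paper: the paper derives Theorem \ref{theo3unipotent} by combining Theorems \ref{dominanteigenvalueCN}, \ref{theo3}, \ref{dominanteigenvalueCS} and \ref{theofiniterank} with the statement (quoted from \cite{dSP3involutions} in Section \ref{finiteranksection}) that $A$ is a $\bigl((t-1)^2,(t-1)^2,(t-1)^2\bigr)$-product $\lambda$-stably for $\lambda\in\{1,-1\}$ exactly when $\det A$ is a power of $\lambda$, which is precisely the matrix-level criterion you invoke at the end. The only nitpick is that $\det\bigl(u_{|W}\bigr)$ and $\det\bigl(u_{|\im(u-\lambda\id_V)}\bigr)$ differ by a further power of $\lambda$ coming from the complement of $\im(u-\lambda\id_V)$ in $W$, so the exponent is not exactly $q$, but the resulting condition (``a power of $\lambda$'') is unaffected.
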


\begin{theo}\label{theo3mixed}
Let $V$ be an infinite-dimensional vector space.
Let $u \in \GL(V)$. Then $u$ is the product of one unipotent automorphism with index $2$ and two involutions
if and only if none of the following conditions holds:
\begin{enumerate}[(i)]
\item There is a scalar $\lambda$ such that $\rk(u-\lambda \id_V)<\dim V$ and $\lambda^2 \neq 1$.
\item There is a scalar $\lambda \in \{1,-1\}$ such that $u-\lambda \id_V$ has finite rank and the determinant of
the automorphism of $\im(u-\lambda \id_V)$ induced by $u$ is neither $1$ nor $-1$.
\end{enumerate}
\end{theo}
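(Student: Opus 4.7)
The plan is to identify the relevant triple of polynomials as $(p_1,p_2,p_3)=((t-1)^2,t^2-1,t^2-1)$ (up to cyclic permutation) and to combine the general theorems already proved---Theorems \ref{dominanteigenvalueCN}, \ref{theo3}, and \ref{dominanteigenvalueCS}---with the finite-dimensional classification from \cite{dSP3involutions} tailored to this specific triple. First I would compute the relevant invariants: $N((t-1)^2)=1$ and $N(t^2-1)=-1$ yield $N(p_1)N(p_2)N(p_3)=1$; moreover the $(p_1,p_2,p_3)$-products in the base field $\F$ are exactly $\pm 1$, because the single unipotent-of-index-$2$ factor must equal $1$ and each involution factor must equal $\pm 1$. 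Thus the set of acceptable scalars for $(p_1,p_2,p_3)$ is exactly $\{-1,1\}$, which matches the formulation of conditions (i) and (ii) in the statement.

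For necessity, suppose $u$ is a $(p_1,p_2,p_3)$-product. Theorem \ref{dominanteigenvalueCN} directly forbids a dominant eigenvalue outside $\{-1,1\}$, so (i) cannot hold. For the necessity of (ii), the finite-rank reduction developed in Section \ref{finiteranksection} rephrases the question ``is $u$ a $(p_1,p_2,p_3)$-product?'' as ``does there exist $q \geq 0$ such that $A \oplus \lambda I_q$ is a $(p_1,p_2,p_3)$-product in $\Mat_{n+q}(\F)$?'' for a suitable finite matrix $A$ encoding the action of $u$ on a finite-dimensional subspace containing $\im(u-\lambda\id_V)$; the finite-dimensional classification of \cite{dSP3involutions} for this specific triple then forces $\det A \in \{-1,1\}$, which in turn coincides with the determinant of the induced automorphism on $\im(u-\lambda\id_V)$.

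For the converse, I would split on the structure of $u$ into three cases. First, if $u$ has no dominant eigenvalue, the conclusion follows at once from Theorem \ref{theo3}. Second, if $u$ has a dominant eigenvalue $\lambda$ and $u-\lambda\id_V$ is not of finite rank, then the failure of (i) forces $\lambda \in \{-1,1\}$, which is acceptable, and Theorem \ref{dominanteigenvalueCS} applies. Third, if $u-\lambda\id_V$ has finite rank with $\lambda \in \{-1,1\}$, the failure of (ii) guarantees that the determinant of the induced automorphism on $\im(u-\lambda\id_V)$ lies in $\{-1,1\}$; I then invoke the reduction of Section \ref{finiteranksection} together with the matching finite-dimensional existence result from \cite{dSP3involutions} to produce the required factorisation.

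The main obstacle is the finite-rank case: it rests entirely on the external classification of finite-dimensional $(p_1,p_2,p_3)$-products with $p_1=(t-1)^2$ and $p_2=p_3=t^2-1$ established in \cite{dSP3involutions}, together with a careful check that the reduction of Section \ref{finiteranksection} faithfully identifies the obstruction as a determinant condition on the finite-dimensional ``support'' of $u-\lambda\id_V$. The remaining two cases amount to repackaging Theorems \ref{theo3} and \ref{dominanteigenvalueCS} once acceptability of the dominant eigenvalue has been read off from the computation of $N(p_1)N(p_2)N(p_3)$.
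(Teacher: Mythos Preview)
Your proposal is correct and follows essentially the same route as the paper: the paper explicitly derives Theorem~\ref{theo3mixed} by combining the third stable-product theorem quoted from \cite{dSP3involutions} (for the triple $((t-1)^2,t^2-1,t^2-1)$) with Theorems~\ref{dominanteigenvalueCN}, \ref{theo3}, \ref{dominanteigenvalueCS} and~\ref{theofiniterank}, exactly as you outline. One small point worth making explicit in your write-up: the determinant produced by Theorem~\ref{theofiniterank} is that of $A+\lambda I_n$ (the matrix of $u$ on the minimal subspace $W$), which differs from the determinant of $u_{|\im(u-\lambda\id_V)}$ by a power of $\lambda$; since $\lambda\in\{-1,1\}$ here, the two conditions ``$\det\in\{-1,1\}$'' are equivalent, so your identification goes through.
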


With the above, little mistery remains on the decompositions into products of three quadratic automorphisms.
We would still like to point to some open problems. First, we wonder whether the above could be generalized to
irreducible annihilating polynomials. For example, for infinite-dimensional real vector spaces it would be nice to know
whether every automorphism splits into the product of several automorphisms $v$ such that $v^2=-\id_V$ (i.e.\ quarter-of-turns)
and what is the minimal number of factors required in general. In fact, one can show that in Theorem \ref{theo3} (respectively, Theorem \ref{theo4general}), we can allow
one (respectively, two) of the polynomials to be irreducible.

Also, there remains the issue of classifying the $(p_1,p_2)$-products.
For example, the product of two involutions must be similar to its inverse, and the converse is true in finite-dimensional vector spaces:
but is it true in the infinite-dimensional case? The lack of any sort of canonical form for an endomorphism of an infinite-dimensional vector space
makes us worry that this could be a very difficult problem.

\subsection{Strategy}

The above results are very similar to the ones of \cite{dSPidempotentinfinite1,dSPidempotentinfinite2}
for decompositions into sums. Unsurprisingly, we will follow a similar strategy, and at times we will even
be able to borrow some lemmas! However, there are a few major technical differences, and we will discuss some of them here.

Key to \cite{dSPidempotentinfinite1,dSPidempotentinfinite2} was the notion of an elementary endomorphism of $V$.
An elementary endomorphism of $V$ was defined there as a direct sum of right-shifts, a right-shift being a linear mapping $v$
on a linear subspace $W$ of $V$ equipped with a basis $(e_k)_{k \geq 0}$ indexed over the non-negative integers, such that
$v(e_k)=e_{k+1}$ for all $k$. It is quite easy to see that such an endomorphism splits into the sum of two square-zero endomorphisms, and
can also accommodate various similar decompositions. In \cite{Shitov}, Shitov
proved that any endomorphism of an infinite-dimensional vector space is the sum of two elementary endomorphisms,
yielding the equivalent of Theorem \ref{theo4general} for decompositions into sums.

In this work, the relevant notion of elementary endomorphism is different: it is related to \emph{bilateral shifts} instead of right-shifts.
Let us be more precise. Denote, respectively, by $\N$ and $\Z$ the set of all non-negative integers and the set of all integers.
Denote by $\F[t,t^{-1}]$ the algebra of Laurent polynomials with one indeterminate $t$.
An automorphism $u$ of $V$ endows it with a structure of $\F[t,t^{-1}]$-module,
so that $t\cdot x=u(x)$ for all $x \in V$: We use the notation $V^u$ when we speak of $V$ as an $\F[t,t^{-1}]$-module.
Conversely, given an $\F[t,t^{-1}]$-module $W$, the mapping $x \mapsto t\cdot x$ is an automorphism of the $\F$-vector space $W$.
The submodules of $V^u$ are the linear subspaces of $V$ that are stable under $u$ and $u^{-1}$ (put differently, they are the linear subspaces
$W$ of $V$ such that $u(W)=W$).

Let us view $\F[t,t^{-1}]$ as a module over itself.
Then the automorphism $p(t) \mapsto tp(t)$ maps $t^k$ to $t^{k+1}$ for all $k \in \Z$.
An automorphism $u$ of a vector space $V$ will be called \textbf{super-elementary} when $V^u$ is isomorphic to
$\F[t,t^{-1}]$, which is equivalent to the existence of a basis $(e_k)_{k \in \Z}$ of the vector space $V$
such that $u(e_k)=e_{k+1}$ for all $k \in \Z$.
We will say that $u$ is \textbf{elementary} whenever $u$ is a direct sum of super-elementary automorphisms,
i.e.\ $V^u$ is a free $\F[t,t^{-1}]$-module.

Here, elementary automorphisms are very handy because they admit decompositions into a very small number of factors:

\begin{theo}\label{theo2}
Let $p,q$ be split non-derogatory polynomials in $\F[t]$ with degree $2$.
Then every elementary automorphism of a vector space is a $(p,q)$-product.
\end{theo}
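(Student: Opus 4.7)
The plan is to reduce to the super-elementary case and then convert the problem into a small matrix factorization over a polynomial ring via a carefully chosen commutative subalgebra. First I would observe that if $u$ is elementary, so $V = \bigoplus_i V_i$ with each $V_i$ a $u$-stable super-elementary subspace, then any family of factorizations $u|_{V_i} = A_i B_i$ with $p(A_i) = q(B_i) = 0$ assembles into $a := \bigoplus_i A_i$ and $b := \bigoplus_i B_i$ realizing $u$ as a $(p,q)$-product. Hence it suffices to treat a single super-elementary $u$, which I identify with multiplication by $t$ on $V := \F[t, t^{-1}]$.

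The key idea is to analyze $V$ as a module over the commutative subalgebra generated by $w := u + N u^{-1}$, where $N := N(p)N(q)$. This operator is motivated as follows: if $u = ab$ with $p(a) = q(b) = 0$, then eliminating $b = a^{-1}u$ in $q(a^{-1}u) = 0$ and using $p(a)=0$ yields, after multiplying by $u^{\pm 1}$, the two identities
\[
ua - N a u^{-1} \;=\; \tr(p)u - N(p)\tr(q)\,I \;=\; au - N u^{-1} a,
\]
whose difference gives $[a, u + Nu^{-1}] = 0$. So both $a$ and $b$ must commute with $w$. I would then prove the structural lemma that $V$ is a free $\F[w]$-module of rank $2$ with basis $\{1, t\}$: spanning follows from the quadratic relation $t^2 = wt - N$ (obtained from $tw = t^2 + N$), which lets every $t^k$ be written as an $\F[w]$-combination of $1$ and $t$; independence follows by applying to a hypothetical relation $f(w) + g(w)\cdot t = 0$ the $\F[w]$-linear involution $\iota: t \mapsto N t^{-1}$ (which fixes $w$) and using that $V$ is an integral domain.

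In the basis $\{1, t\}$, $u$ is represented by $U := \bigl(\begin{smallmatrix} 0 & -N \\ 1 & w \end{smallmatrix}\bigr) \in \Mat_2(\F[w])$, and the problem becomes: find $A, B \in \Mat_2(\F[w])$ with $AB = U$, $p(A) = 0$, $q(B) = 0$. The natural guess is to take $A$ \emph{constant} (i.e.\ in $\Mat_2(\F)$) with $\tr A = \tr(p)$ and $\det A = N(p)$, so that $p(A) = 0$ holds automatically by Cayley-Hamilton, and to choose $A$ so that $B := A^{-1}U$ becomes upper-triangular over $\F[w]$ with diagonal $(\beta_1, \beta_2)$, where $\beta_1, \beta_2$ are the roots of $q$. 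Since the only $w$-bearing entry of $U$ is its $(2,2)$-slot, requiring the $(2,1)$-entry of $B$ to vanish forces $A_{11} = 0$, and solving the remaining scalar constraints gives the explicit choice
\[
A \;=\; \begin{pmatrix} 0 & -N(p)\,\beta_1 \\ 1/\beta_1 & \tr(p) \end{pmatrix}, \qquad B \;=\; A^{-1} U \;=\; \begin{pmatrix} \beta_1 & \beta_1 w - \tr(p)\,N(q) \\ 0 & \beta_2 \end{pmatrix}.
\]
A short verification shows $AB = U$, and $B$ being upper triangular with diagonal $(\beta_1, \beta_2)$ gives $q(B) = 0$.

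To conclude, $A$ and $B$ define $\F[w]$-linear, hence $\F$-linear, endomorphisms $a$ and $b$ of $V$, the matrix identity $AB = U$ translates into $u = ab$, and the annihilation identities persist at the operator level. The hard part is recognizing the $\F[w]$-module structure: once one sees that $a$ must commute with $w = u + N u^{-1}$ and that $V$ is then free of rank $2$ over $\F[w]$, the infinite-dimensional problem collapses to a routine $2 \times 2$ matrix calculation.
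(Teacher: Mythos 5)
Your proof is correct, and it takes a genuinely different route from the paper's. The paper also reduces to the super-elementary case via Remark \ref{directsumremark}, but then argues by bare hands: it writes down explicit formulas for $a$ and $b$ on a basis $(x_n)_{n\in\N}$ (with $a$ acting as $\alpha\,\id+{}$a shift-like nilpotent part on even-indexed vectors and $b$ cyclically permuting pairs), checks $p(a)=q(b)=0$ directly, and then invokes Proposition \ref{elementaryprop1} to verify that $ab$ is super-elementary. Your argument instead exploits the \emph{necessary} commutation of any factor with $w=u+N(p)N(q)\,u^{-1}$ (this is exactly Lemma \ref{commutelemma2}, which the paper uses only for obstructions, e.g.\ to show super-elementary automorphisms are not $(t^2+1,t^2+1)$-products), proves that $\F[t,t^{-1}]$ is free of rank $2$ over $\F[w]$ with basis $\{1,t\}$, and collapses the problem to factoring the companion-type matrix $U=\bigl(\begin{smallmatrix}0 & -N\\ 1 & w\end{smallmatrix}\bigr)$ over $\F[w]$; I checked your freeness argument (the involution $t\mapsto Nt^{-1}$ fixes $w$ and integrality of $\F[t,t^{-1}]$ gives uniqueness of coordinates) and your explicit $A$, $B$ (indeed $\tr A=\tr(p)$, $\det A=N(p)$, and $A^{-1}U$ is upper triangular with diagonal $(\beta_1,\beta_2)$), and both are sound. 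What each approach buys: yours is more conceptual and explains \emph{why} the factorization exists, and it needs only $q$ (the second factor's polynomial) to be split; the paper's construction needs only $p$ split, which is the form it reinvests later in the proof of Proposition \ref{goodstratprop}, and it avoids any discussion of module freeness over $\F[w]$. Either version yields the theorem as stated.
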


The corresponding result for sums (in which bilateral shifts are replaced with right-shifts)
was easy to see. Here, things are far less obvious!

In \cite{Shitov}, Shitov proved that every endomorphism of an infinite-dimensional vector space is the sum of
two endomorphisms that are direct sums of right-shifts, and he deduced the equivalent of Theorem \ref{theo4general} for sums.
An interesting open question is whether every automorphism of an infinite-dimensional vector space is the product of two
elementary ones. Note that if this were true then as an application every automorphism of a vector space $V$ with uncountable dimension
would be a commutator in the group $\GL(V)$! Indeed, the inverse of an elementary automorphism is an elementary one, and if
$V$ has uncountable dimension then it is easily seen that all elementary automorphisms are conjugate in $\GL(V)$.

\begin{Def}
Let $V$ be an $\F$-vector space, and $p \in \F[t]$ be a non-derogatory split polynomial with degree $2$.
Let $u \in \GL(V)$ and $v \in \GL(V)$. We say that $u$ is \textbf{$p$-adjacent to $v$} whenever there exists
$a \in \GL(V)$ such that $p(a)=0$ and $au=v$.
\end{Def}

\begin{Not}
Given  a non-derogatory polynomial $p(t)=a_n\, t^n+\cdots +a_0$ of degree $n$, we denote by
$$p^\sharp(t) :=t^n p(1/t)=a_0\, t^n+\cdots + a_n$$
its reciprocal polynomial (which is also non-derogatory).
Note that $(p^\sharp)^\sharp=p$.
\end{Not}

Noting that an automorphism $a \in \GL(V)$ is annihilated by $p$ if and only if its inverse is annihilated by $p^\sharp$,
we deduce that, given non-derogatory polynomials $p_1,\dots,p_n$, an element of $\GL(V)$
is a $(p_1,\dots,p_n)$-product if and only if it is $p_1^\sharp$-adjacent to a $(p_2,\dots,p_n)$-product.
Hence, in order to obtain Theorem \ref{theo3}, it suffices to combine Theorem \ref{theo2} with the following result,
whose proof will take up the larger part of the present article:

\begin{theo}\label{adjacencytheo}
Let $u$ be an automorphism of an infinite-dimensional vector space, with no dominant eigenvalue.
Then for any split non-derogatory polynomial $p \in \F[t]$ with degree $2$,
the automorphism $u$ is $p$-adjacent to an elementary automorphism.
\end{theo}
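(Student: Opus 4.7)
My plan is to construct the required $p$-adjacent elementary automorphism by a transfinite back-and-forth recursion. Write $p(t)=(t-\alpha)(t-\beta)$ with $\alpha,\beta \in \F\setminus\{0\}$, and focus on the case $\alpha \neq \beta$ (the case $\alpha=\beta$ is handled analogously using a nilpotent perturbation). An automorphism $a$ with $p(a)=0$ is precisely the data of a direct-sum decomposition $V = A \oplus B$ with $a|_A=\alpha\,\id_A$ and $a|_B=\beta\,\id_B$. Writing each $x \in V$ as $x^A+x^B$ with respect to this decomposition, the requirement that $v := au$ be elementary with basis $(e_{k,i})_{(k,i) \in \Z \times I}$ satisfying $v(e_{k,i})=e_{k+1,i}$ becomes the compatibility relation
$$u(e_{k,i}) \;=\; \alpha^{-1}\,e_{k+1,i}^A + \beta^{-1}\,e_{k+1,i}^B \qquad \text{for all } (k,i) \in \Z \times I.$$
Thus the task is to build simultaneously $A$, $B$, and a basis $(e_{k,i})_{(k,i) \in \Z \times I}$ of $V$ whose $A$-parts form a basis of $A$ and whose $B$-parts form a basis of $B$, subject to the above relation.

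I fix an ordinal $\kappa$ of cardinality $\dim V$ together with a well-ordering $(v_\xi)_{\xi<\kappa}$ of a basis of $V$, and by transfinite induction on $\xi$ I construct nested partial data: an index set $I_\xi$ of cardinality at most $\max(\aleph_0,|\xi|)$, finite intervals $\lcro -n_{\xi,i},n_{\xi,i}\rcro$ of $\Z$ for each $i \in I_\xi$, partial strands $(e_{k,i})$ together with their prescribed decompositions $e_{k,i} = e_{k,i}^A + e_{k,i}^B$, and the spans of the $A$-parts and $B$-parts so produced. Throughout, the compatibility relation must hold whenever both sides are defined, and the cumulative vectors must remain linearly independent. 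At successor stages I alternate between an \emph{extension} step, which appends a new $e_{n+1,i}$ or $e_{-n-1,i}$ to some existing strand, and a \emph{covering} step, which enlarges the partial basis so as to contain $v_\xi$ in its span. At limits I take unions.

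The main obstacle is the extension step: given $e_{n,i}$, I set $y:=u(e_{n,i})$ and must choose a decomposition $y=y^A+y^B$, then define $e_{n+1,i}:=\alpha y^A+\beta y^B$, all the while preserving linear independence of the total family. When $y$ does not lie in the span $W_\xi$ of the currently defined basis, the decomposition can be chosen freely and $e_{n+1,i}$ arranged to lie outside $W_\xi$; this is routine. The delicate case is when $y \in W_\xi$: then $y^A$ and $y^B$ are forced by the existing decomposition $W_\xi=(W_\xi\cap A)\oplus(W_\xi\cap B)$, the vector $e_{n+1,i}$ lies in $W_\xi$, and one must avoid collisions with earlier basis vectors. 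This is where the no-dominant-eigenvalue hypothesis intervenes: systematic occurrence of the forced case would force $u$ to act as a scalar on an infinite-dimensional $u$-invariant subspace of large codimension, contradicting the hypothesis. Making this precise, through a careful rank and dimension count in the spirit of the excerpt's uniqueness argument for dominant eigenvalues, is the main technical hurdle. Once the induction terminates, the union data defines $V=A\oplus B$, hence an automorphism $a$ with $p(a)=0$, together with an elementary $v=au$, showing that $u$ is $p$-adjacent to an elementary automorphism.
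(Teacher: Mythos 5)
Your reformulation is correct: for $p(t)=(t-\alpha)(t-\beta)$ with $\alpha\neq\beta$, an automorphism $a$ with $p(a)=0$ is the same as a splitting $V=A\oplus B$ into eigenspaces, and $v=au$ being a bilateral shift on a basis $(e_{k,i})$ is exactly your compatibility relation. But the proof has a genuine gap at the step you yourself flag as "the main technical hurdle", and the heuristic you offer for closing it is false. In the forced case ($y:=u(e_{n,i})\in W_\xi$), the vector $e_{n+1,i}=\alpha y^A+\beta y^B$ is a \emph{determined} element of $W_\xi$, since $W_\xi$ already carries its committed $A/B$-decomposition; appending it destroys linear independence outright, and there is no "collision to avoid" --- the step simply cannot be performed. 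Your claim that systematic occurrence of this situation would force $u$ to act as a scalar on an invariant subspace of small codimension is incorrect: take $V^u$ to be a torsion $\F[t,t^{-1}]$-module, say $\bigoplus_{n}\F[t]/(q_n)$ with each $q_n$ split of degree $2$ and $q_n(0)\neq 0$. Then \emph{every} vector generates a finite-dimensional $u$-invariant subspace, so every strand is threatened by the forced case after finitely many extensions, yet $u$ has no dominant eigenvalue (each $\rk(u-\lambda\,\id_V)$ is infinite). The no-dominant-eigenvalue hypothesis is far too weak to make the forced case rare; it only rules out $\rk(u-\lambda\,\id_V)<\dim V$.

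What is actually needed --- and what the paper does --- is to escape the forced case by letting $a$ act \emph{non-scalarly} on the terminal vector of each finite $u$-chain, redirecting the strand into a fresh generator taken from outside the current span. This cannot be improvised strand-by-strand in a greedy recursion, because the constraint $p(a)=0$ couples these off-diagonal choices: if $(a-\mu\,\id_V)(x)$ is a vector on which $a$ is again non-scalar, $p(a)=0$ fails. The paper handles this by first organizing $V^u$ into a stratification chosen in advance, defining $a$ to send the top of each stratum into the next one, and imposing the \emph{semi-good} condition (consecutive strata of dimension $>1$, no last stratum) precisely so that $p(a)=0$ holds (Proposition \ref{goodstratprop}). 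Moreover, semi-good stratifications can fail to exist in countable dimension even without a dominant eigenvalue (Example \ref{exampleofnogoodstrat}), which is why the paper needs a separate argument for the countable non-torsion case (Section \ref{countablesection}). Your uniform back-and-forth, as described, does not supply either mechanism, so the argument does not go through.
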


Note that this result is optimal. Indeed, let $a \in \GL(V)$ be annihilated by some non-derogatory split monic polynomial
$p(t)=(t-x)(t-y)$ with degree $2$, and let $u \in \GL(V)$ have a dominant eigenvalue $\lambda$.
Using $\im(a-x\id_V) \subset \Ker(a-y \id_V)$,
we find that if $\dim \Ker(a-y \id_V)<\dim V$ then $x$ is a dominant eigenvalue of $a$,
and hence the rank theorem shows that $\dim V=\dim \Ker(a-x \id_V)$.
Hence, one of the kernels $\Ker(a-x \id_V)$ and $\Ker(a-y \id_V)$ has dimension $\dim V$.
Then such a subspace does not have trivial intersection with $\Ker(u-\lambda \id_V)$ otherwise the rank theorem would yield
that it is isomorphic to a subspace of $\im(u-\lambda \id_V)$.
Hence, $\Ker(u-\lambda \id_V)$ has a nonzero common vector with one of the spaces $\Ker(a-x \id_V)$ or $\Ker(a-y \id_V)$,
and it ensues that one of $\lambda x$ and $\lambda y$
is an eigenvalue of $au$. However, it is obvious that
no elementary automorphism has an eigenvalue.

We finish with a very useful remark:

\begin{Rem}\label{directsumremark}
Let $p_1,\dots,p_r$ be non-derogatory polynomials with degree $2$ in $\F[t]$.
Then being a $(p_1,\dots,p_r)$-product is stable under taking direct sums.
Let us explain this in more details: let $u$ be an automorphism of a vector space $V$.
Assume that we have a splitting $V=\underset{i \in I}{\bigoplus} V_i$ in which $u(V_i)=V_i$ for all $i \in I$,
and denote by $u_i$ the induced automorphism of $V_i$. Assume also that, for all $i \in I$,
the endomorphism $u_i$ splits into $u_i=\underset{k=1}{\overset{r}{\prod}} u_{i,k} $ where $u_{i,k}\in \GL(V_i)$ and $p_k(u_{i,k})=0$
for all $k \in \lcro 1,r\rcro$.
Then by setting $u^{(k)}:=\underset{ i \in I}{\bigoplus} u_{i,k} \in \GL(V)$, we see that
$u=\underset{k=1}{\overset{r}{\prod}} u^{(k)}$ and $p_k(u^{(k)})=0$ for all $k \in \lcro 1,r\rcro$.
\end{Rem}

\subsection{Structure of the article}

The remainder of the article is laid out as follows.
In Section \ref{CNsection}, we prove Theorem \ref{dominanteigenvalueCN}.
This is done in two steps: first, we determine which scalar multiples of the identity are
$(p_1,p_2,p_3)$-products; then, we prove that decomposing an automorphism $u$ with a dominant eigenvalue
$\lambda$ into a product of three quadratic automorphisms yields a non-trivial subspace that includes $\im(u-\lambda \id_V)$
and is invariant under all the factors at hand (the Invariant Subspace Lemma). From those results, we will easily derive Theorem \ref{dominanteigenvalueCN}.
With the same line of reasoning, we will find in Section \ref{finiteranksection} that characterizing the $(p_1,p_2,p_3)$-products
among the operators of the form $\lambda \id_V+w$, with $\lambda \in \F$ and $w$ of finite rank, amounts to solving a problem in the finite-dimensional case.
In particular, the case when each $p_i$ equals $t^2-1$ or $(t-1)^2$ will be completely solved thanks to recent decomposition results in the stable
general linear group \cite{dSP3involutions}.

The remaining sections are devoted to sufficient conditions for splitting an automorphism into quadratic ones.
First, elementary automorphisms are studied in Section \ref{elementarysection}: there, we devise ways to recognize super-elementary
automorphisms, and we obtain Theorem \ref{theo2} as a consequence.
The next three sections are devoted to the proof of Theorem \ref{theo3}:
in Section \ref{stratificationsection}, we develop the notion of a stratification of an $\F[t,t^{-1}]$-module
(which closely mimics the one
for $\F[t]$-modules featured in \cite{dSPidempotentinfinite1,dSPidempotentinfinite2}) and we prove that, assuming the existence
of a \emph{semi-good} stratification of $V^u$, the automorphism $u$ is $p$-adjacent to an elementary automorphism
(for any non-derogatory split polynomial $p$ with degree $2$).
In Section \ref{uncountablesection}, we prove that such a stratification exists whenever $u$ has no dominant eigenvalue and $V$ has
uncountable dimension, thereby completing the proof of Theorem \ref{theo3} for such spaces.
In Section \ref{countablesection}, we complete the proof of Theorem \ref{theo3} for spaces with infinite countable dimension:
thanks to a result that was proved in \cite{dSPidempotentinfinite2}, we limit the discussion to the case when $V^u$
includes a free submodule of rank $1$, and in that case we complete the proof by using a new method.

In the ultimate section, we will easily derive Theorems \ref{theo4general} and \ref{dominanteigenvalueCS} from Theorem~\ref{theo3}.

\section{Necessary conditions for being a $(p_1,p_2,p_3)$-product}\label{CNsection}

\subsection{The case of scalar multiples of the identity}

\begin{prop}\label{homothetieprop1}
Let $V$ be an infinite-dimensional vector space, and let $\lambda \in \F \setminus \{0\}$.
Let $p_1,p_2,p_3$ be non-derogatory split polynomials with degree $2$ over $\F$,
and let $\lambda \in \F$ be acceptable for $(p_1,p_2,p_3)$.
Then $\lambda\,\id_V$ is a $(p_1,p_2,p_3)$-product.
\end{prop}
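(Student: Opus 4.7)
The plan is to separate along the two clauses of acceptability. If $\lambda=\alpha_1\alpha_2\alpha_3$ in $\F$ with $p_i(\alpha_i)=0$ for each $i$, then $\lambda\,\id_V=(\alpha_1\id_V)(\alpha_2\id_V)(\alpha_3\id_V)$ is the desired factorization, since $p_i(\alpha_i\,\id_V)=p_i(\alpha_i)\,\id_V=0$. So we may henceforth assume $\lambda^2=N(p_1)N(p_2)N(p_3)$.

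By Remark \ref{directsumremark}, together with the fact that any infinite-dimensional vector space splits as a direct sum of two-dimensional subspaces (just pair up vectors of a basis), it suffices to prove that $\lambda I_2\in\Mat_2(\F)$ is a $(p_1,p_2,p_3)$-product. Write $p_i(t)=(t-x_i)(t-y_i)$ with $x_i,y_i\in\F\setminus\{0\}$, so $N(p_i)=x_iy_i$. Using cyclic-permutation invariance of the $(p_1,p_2,p_3)$-product property, we split into two cases.

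\emph{Case 1: at least one $p_i$ has distinct roots.} After reordering we may assume $x_1\ne y_1$. Set $A_1:=\operatorname{diag}(x_1,y_1)$ and $M:=\lambda A_1^{-1}$, and look for $A_2,A_3\in\GL_2(\F)$ with $p_i(A_i)=0$ and $A_2 A_3=M$. Parameterizing $A_2$ as a $2\times 2$ matrix with trace $x_2+y_2$ and determinant $x_2y_2$ (so that its characteristic polynomial is $p_2$), the trace of $A_3:=A_2^{-1}M$ turns out to be an affine function of the diagonal entry $(A_2)_{1,1}$ whose linear coefficient is proportional to $x_1-y_1\ne 0$. We therefore tune $A_2$ so that $\tr(A_3)=x_3+y_3$; the identity $\det(A_3)=\lambda^2/(x_1y_1x_2y_2)=x_3y_3$ is automatic from the acceptability equation. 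Hence $A_3$ has characteristic polynomial $p_3$, and $p_3(A_3)=0$ by Cayley--Hamilton.

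\emph{Case 2: every $p_i$ has the form $(t-x_i)^2$.} Acceptability forces $\lambda\in\{x_1x_2x_3,-x_1x_2x_3\}$, and the first value falls under the initial clause; so we may assume $\lambda=-x_1x_2x_3$ and $\car\F\ne 2$. Writing $A_i=x_iB_i$ reduces the problem to exhibiting three unipotent-with-index-$2$ matrices $B_1,B_2,B_3\in\GL_2(\F)$ with $B_1B_2B_3=-I_2$. An explicit $2\times 2$ construction suffices: take $B_1$ and $B_2$ to be upper- and lower-triangular unipotent Jordan blocks, and tune the free entry of $B_2$ so that $B_3:=-(B_1B_2)^{-1}$ has trace $2$, hence is itself unipotent with index $2$. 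The main conceptual step of the proof is the reduction to $\Mat_2(\F)$; the main technical subtlety is Case~2, whose construction must work uniformly over any field of characteristic different from $2$.
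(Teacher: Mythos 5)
Your argument is correct, and its skeleton is the same as the paper's: handle the first clause of acceptability by scalar factors, reduce to $\lambda I_2$ in $\Mat_2(\F)$ via Remark \ref{directsumremark} and a splitting of $V$ into planes, and then exhibit explicit $2\times 2$ factors. The only genuine divergence is in how the $2\times 2$ factorization is produced. The paper gives a single uniform construction: it writes down the inverses $B,C$ of two explicit companion-type matrices annihilated by $p_2^\sharp$ and $p_3^\sharp$, and checks that $\lambda C^{-1}B^{-1}$ is triangular with the two roots of $p_1$ on the diagonal, hence annihilated by $p_1$ whether or not those roots coincide; no case distinction and no characteristic hypothesis ever appear. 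You instead run a two-case analysis (some $p_i$ with distinct roots versus all $p_i$ of the form $(t-x_i)^2$), using a trace-tuning argument in the first case and a separate unipotent construction $B_1B_2B_3=-I_2$ in the second. Both cases check out: in Case 1 the coefficient of the free diagonal entry in $\tr(A_3)$ is $\lambda(x_1-y_1)/(x_1y_1x_2y_2)\neq 0$, the determinant condition is exactly the acceptability equation, and the off-diagonal entries of $A_2$ can always be chosen to realize the required determinant; in Case 2 the condition $st=-4$ is solvable precisely because $\car\F\neq 2$, which you correctly extract from $-x_1x_2x_3\neq x_1x_2x_3$. Your reliance on cyclic-permutation invariance to reorder in Case 1 is also legitimate, since any one of the three polynomials can be brought to the front by a cyclic shift. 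In short: same strategy, with the paper's version of the $2\times 2$ step being slightly slicker (uniform, case-free) and yours being more transparent about the underlying degrees-of-freedom count.
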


\begin{proof}
Without loss of generality, we assume that $p_1,p_2,p_3$ are all monic, and we
set $\alpha:=N(p_1)$, $\beta:=N(p_2)$ and $\gamma:=N(p_3)$.

If $\lambda$ is a $(p_1,p_2,p_3)$-product, we split it up into
$\lambda=x_1x_2x_3$ where $p_i(x_i)=0$ for all $i \in \lcro 1,3\rcro$, and it is then obvious
from writing $\lambda\,\id_V=(x_1\,\id_V) (x_2\id_V) (x_3\id_V)$ that $\lambda\,\id_V$ is a $(p_1,p_2,p_3)$-product
in $\End(V)$.

Assume now that $\lambda^2=\alpha\beta\gamma$. We shall prove that the scalar matrix
$\lambda I_2$ is a $(p_1,p_2,p_3)$-product in the algebra $\Mat_2(\F)$ of all $2$ by $2$ matrices with entries in $\F$.
We note that this can be deduced directly from the general classification of $(p_2,p_3)$-products featured in \cite{dSPproductquad} by noting that
the companion matrix $D$ of $p_1$ is such that $\lambda D^{-1}$ is a $(p_2,p_3)$-product, but we will give a simple direct proof.
Let us split $p_1(t)=(t-x)(t-y)$ with $(x,y)\in (\F\setminus \{0\})^2$.
Write also $\beta^{-1}t^2p_2(t^{-1})=t^2-\mu t+\beta^{-1}$
and $\gamma^{-1}t^2p_3(t^{-1})=t^2-\nu t+\gamma^{-1}$.
The matrices
$$B':=\begin{bmatrix}
0 & -\beta^{-1} \\
1 & \mu
\end{bmatrix} \quad \text{and} \quad C':=\begin{bmatrix}
\nu & \lambda^{-1}x \\
-x^{-1}\gamma^{-1}\lambda & 0
\end{bmatrix}$$
are invertible, and the Cayley-Hamilton theorem reveals that they are respectively annihilated by $p_2^\sharp$ and $p_3^\sharp$.
Therefore, $B:=(B')^{-1}$ and $C:=(C')^{-1}$ are respectively annihilated by $p_2$ and $p_3$.

Since $\lambda^2=\alpha\beta\gamma$ and $x^{-1}\alpha=y$, we find
$$\lambda\,C^{-1}B^{-1}=\lambda C'B'=\begin{bmatrix}
x & ? \\
0 & y
\end{bmatrix}.$$
Hence, $A:=\lambda\,C^{-1}B^{-1}$ is annihilated by $p_1$, and we conclude that
$\lambda\, I_2=ABC$ is a $(p_1,p_2,p_3)$-product in $\Mat_2(\F)$. It follows that
for every $2$-dimensional vector space $P$ over $\F$,
the automorphism $\lambda \id_P$ is a $(p_1,p_2,p_3)$-product in the algebra $\End(P)$.

Finally, since $V$ is infinite-dimensional we can split it up into $V=\underset{i \in I}{\bigoplus} Q_i$
where each $Q_i$ is a $2$-dimensional vector space. Then $\lambda\,\id_{Q_i}$ is a $(p_1,p_2,p_3)$-product
for all $i \in I$, and
by Remark \ref{directsumremark} we conclude that $\lambda\,\id_V$ is a $(p_1,p_2,p_3)$-product.
\end{proof}

The converse of Proposition \ref{homothetieprop1} holds thanks to the following general result:

\begin{prop}\label{homothetieprop2}
Let $\calA$ be a non-trivial $\F$-algebra, and let $\lambda \in \F$.
If $\lambda\cdot 1_\calA$ is a $(p_1,p_2,p_3)$-product in $\calA$, then $\lambda$ is acceptable for $(p_1,p_2,p_3)$.
\end{prop}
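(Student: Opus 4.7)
The plan is to replace $\calA$ by the finite-dimensional subalgebra generated by the factors and then invoke the Wedderburn structure theorem. Assuming the $p_i$ monic and writing $p_i(t) = t^2 - \tau_i t + \nu_i$ with $\nu_i = N(p_i) \neq 0$, each $a_i$ is invertible via the identity $a_i^{-1} = \nu_i^{-1}(\tau_i - a_i)$, so $\lambda \neq 0$ and $a_3 = \lambda(a_1 a_2)^{-1}$ belongs to the $\F$-subalgebra $\calB := \F\langle a_1, a_2\rangle$.

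The first substantive step will be to show that $\calB$ has $\F$-dimension at most $4$, spanned by $\{1, a_1, a_2, a_1 a_2\}$. The non-obvious ingredient is that $c := a_1 a_2$ satisfies a quadratic relation: applying $p_3$ to $a_3 = \lambda c^{-1}$ and clearing denominators yields $c^2 = (\tau_3 \lambda/\nu_3)\, c - \lambda^2/\nu_3$. Combining this with $a_i^2 = \tau_i a_i - \nu_i$, an explicit computation expresses $a_2 a_1$ as an $\F$-linear combination of $\{1, a_1, a_2, a_1 a_2\}$, and a standard rewriting argument then confirms that these four monomials span $\calB$.

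Next, I will pick a maximal two-sided ideal $M$ of the nonzero finite-dimensional algebra $\calB$ and apply Wedderburn's theorem to the simple quotient $S := \calB/M \cong \Mat_n(D)$, subject to $\dim_\F S \leq 4$. The crucial observation is that whenever $S$ is a field extension of $\F$ of degree at least $2$ or a noncommutative division algebra, the splitness of each $p_i$ together with the absence of zero divisors in $S$ forces the image $\bar a_i$ to lie in $\{x_i, y_i\} \subseteq \F$; this collapses the image of the surjection $\calB \to S$ to $\F$ and yields a contradiction. Hence the only surviving possibilities are $S \cong \F$ and $S \cong \Mat_2(\F)$.

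Finally, the conclusion will follow by a case split. If $S \cong \F$, then $\lambda = \bar a_1 \bar a_2 \bar a_3$ with each $\bar a_i \in \{x_i, y_i\}$ directly exhibits $\lambda$ as a $(p_1,p_2,p_3)$-product in $\F$. If $S \cong \Mat_2(\F)$, none of the $\bar a_i$ can be a scalar matrix (otherwise the same dimension argument used for $\calB$ would bound $\dim_\F S$ by $2$), so each $\bar a_i$ has minimal and characteristic polynomial equal to $p_i$ and determinant $\nu_i$; taking determinants in $\bar a_1 \bar a_2 \bar a_3 = \lambda I_2$ then yields $\lambda^2 = \nu_1 \nu_2 \nu_3 = N(p_1)N(p_2)N(p_3)$. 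In either case, $\lambda$ is acceptable. The main technical point to verify carefully is the spanning claim for $\calB$, which requires a rewriting argument that genuinely uses all three quadratic relations.
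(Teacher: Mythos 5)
Your proof is correct, and it takes a genuinely different route from the paper's. The paper first invokes its Lemma \ref{commutelemma2} (both $b$ and $c$ commute with $bc+N(p_2)N(p_3)(bc)^{-1}$): writing $bc=\lambda a^{-1}$, the hypothesis $\lambda^2\neq N(p_1)N(p_2)N(p_3)$ forces $a,b,c$ to pairwise commute, after which the left-regular representations of the (commutative, finite-dimensional) algebra they generate are commuting triangularizable operators with a common eigenvector, exhibiting $\lambda$ as a product of roots. You instead bound $\dim_\F \F\langle a_1,a_2\rangle\leq 4$ via the three quadratic relations (including the one for $c=a_1a_2$ obtained from $p_3(\lambda c^{-1})=0$, which lets you rewrite $a_2a_1=\sigma-\rho\,a_1^{-1}a_2^{-1}\in\Vect(1,a_1,a_2,a_1a_2)$), then pass to a simple quotient and apply Wedderburn--Artin. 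I checked the points you flagged: the spanning claim does hold ($\Vect(1,a_1,a_2,a_1a_2)$ is stable under left multiplication by $a_1$ and $a_2$), the splitness of the $p_i$ correctly kills the division-algebra quotients, and in the $\Mat_2(\F)$ case no $\bar a_i$ can be scalar since otherwise the quotient would be generated by a single quadratic element and have dimension at most $2$. What each approach buys: the paper's argument is self-contained at the level of the two commutation lemmas it needs anyway elsewhere (e.g.\ for the $(t^2+1,t^2+1)$ non-decomposition of super-elementary automorphisms), and its case split is on whether $\lambda^2=N(p_1)N(p_2)N(p_3)$; your argument costs the Wedderburn machinery but is arguably more illuminating, since the two branches of ``acceptable'' emerge exactly as the two possible simple quotients $\F$ and $\Mat_2(\F)$, with the norm condition appearing as a determinant identity.
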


To prove this result, a few basic facts on quadratic elements will help.
Let $p$ be a monic polynomial with degree $2$ in $\F[t]$, and let $a$ be an element of an $\F$-algebra $\calA$ such that
$p(a)=0$. We define $a^\star_p:=(\tr p)\,1_\calA-a$ and we note that $aa_p^\star=a_p^\star a=N(p)\,1_\calA$.
In particular, if $p$ is non-derogatory then $a$ is invertible and $a_p^\star=N(p)\, a^{-1}$.
We recall the following basic results (see lemmas 1.2 and 1.3 from \cite{dSPproductquad}), which we quickly reprove:

\begin{lemma}\label{commutelemma1}
Let $a,b$ be elements of an $\F$-algebra $\calA$, and let $p,q$ be monic polynomials with degree $2$
such that $p(a)=q(b)=0$. Then both $a$ and $b$ commute with $ab_q^\star+b a_p^\star$
\end{lemma}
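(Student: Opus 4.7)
The plan is to rewrite $X := a b_q^\star + b a_p^\star$ in a form that is manifestly symmetric under the swap $(a,p) \leftrightarrow (b,q)$, and then verify that $a$ commutes with $X$ by a short direct computation using only the quadratic relation satisfied by $a$. The commutation with $b$ will then follow from the symmetry, without any additional calculation.

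First I would eliminate the starred notation. Since $a_p^\star = (\tr p)\,1_\calA - a$ and $b_q^\star = (\tr q)\,1_\calA - b$, one obtains
$$X = (\tr q)\, a + (\tr p)\, b - ab - ba,$$
which is visibly invariant under exchanging the pair $(a,p)$ with $(b,q)$. In particular, once I have proved that $a$ commutes with $X$, applying the very same argument to the data $(b,q,a,p)$ will show that $b$ commutes with $X$, so I may concentrate on the first assertion.

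Next I would expand $aX$ and $Xa$ directly. Most terms match on inspection, and the difference reduces to
$$aX - Xa = (\tr p)\,(ab - ba) - (a^2 b - b a^2).$$
Invoking the quadratic relation $a^2 = (\tr p)\, a - N(p)\,1_\calA$ gives $a^2 b - b a^2 = (\tr p)\,(ab - ba)$, so the two contributions cancel and $aX = Xa$. There is no genuine obstacle in this proof: once $X$ is displayed in the symmetric form above, it is a two-line identity. The only conceptual point is the observation that the symmetry of $X$ genuinely removes the need for a second computation for $b$.
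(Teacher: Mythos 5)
Your proof is correct and follows essentially the same route as the paper: both rewrite $ab_q^\star+ba_p^\star$ in the symmetric form $(\tr q)\,a+(\tr p)\,b-ab-ba$, verify commutation with one of the two elements by a one-line use of the quadratic relation, and invoke the symmetry for the other. The only cosmetic difference is that you substitute $a^2=(\tr p)\,a-N(p)\,1_\calA$ directly, while the paper uses the equivalent identity $b\,b_q^\star=N(q)\,1_\calA$.
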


\begin{proof}
By expanding, we obtain
$$ab_q^\star+b a_p^\star=\tr(q)\,a+\tr(p)\, b-ab-ba=b_q^\star a+a_p^\star b,$$
and hence
$$b\bigl(ab_q^\star+b a_p^\star\bigr)=b\bigl(b_q^\star a+a_p^\star b\bigr)=N(q)\,a+ba_p^\star b$$
and
$$\bigl(ab_q^\star+b a_p^\star\bigr)\,b=N(q)\,a+b a_p^\star b.$$
We deduce that $b$ commutes with $u:=ab_q^\star+b a_p^\star$. Symmetrically, $a$ commutes with $u$.
\end{proof}

\begin{lemma}\label{commutelemma2}
Let $a,b$ be elements of an $\F$-algebra $\calA$, and let $p,q$ be non-derogatory monic polynomials with degree $2$
such that $p(a)=q(b)=0$. Then both $a$ and $b$ commute with $ab+N(p)N(q)(ab)^{-1}$.
\end{lemma}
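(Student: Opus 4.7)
The plan is to deduce Lemma \ref{commutelemma2} from Lemma \ref{commutelemma1} by replacing $b$ with its inverse $b^{-1}$. The motivating observation is that the target element $ab+N(p)N(q)(ab)^{-1}=ab+N(p)N(q)\,b^{-1}a^{-1}$ has exactly the shape $a\,c_r^\star+c\,a_p^\star$ of the element produced by Lemma \ref{commutelemma1}, provided one takes $c=b^{-1}$ and the corresponding annihilating polynomial $r$ to be the renormalized reciprocal of $q$.

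More precisely, since $q$ is non-derogatory and monic of degree~$2$, the element $b$ is invertible, and $b^{-1}$ is annihilated by the monic non-derogatory quadratic
$$\tilde q(t):=N(q)^{-1}\,t^2\,q(t^{-1}),$$
for which one readily verifies that $N(\tilde q)=N(q)^{-1}$. Applying Lemma \ref{commutelemma1} to the pair $(a,b^{-1})$ with annihilating polynomials $(p,\tilde q)$ will give that both $a$ and $b^{-1}$ commute with
$$a\,(b^{-1})_{\tilde q}^\star+b^{-1}\,a_p^\star.$$
Using $(b^{-1})_{\tilde q}^\star=N(\tilde q)\,(b^{-1})^{-1}=N(q)^{-1}\,b$ together with $a_p^\star=N(p)\,a^{-1}$, this element simplifies to
$$N(q)^{-1}\,ab+N(p)\,b^{-1}a^{-1}\;=\;N(q)^{-1}\bigl(ab+N(p)N(q)(ab)^{-1}\bigr).$$

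To finish, I would observe that $N(q)\neq 0$, so commutation with a nonzero scalar multiple of an element is the same as commutation with the element itself; and that commutation with $b^{-1}$ is equivalent to commutation with $b$ (multiply $b^{-1}x=xb^{-1}$ by $b$ on both sides). Thus both $a$ and $b$ commute with $ab+N(p)N(q)(ab)^{-1}$. There is no real obstacle here: the only verifications required are the description of the polynomial $\tilde q$ annihilating $b^{-1}$ (and the computation $N(\tilde q)=N(q)^{-1}$) and the identification of $(b^{-1})_{\tilde q}^\star$, both of which are routine consequences of the formulas $x_r^\star=(\tr r)\,1_\calA-x$ and $x\,x_r^\star=N(r)\,1_\calA$ recalled just before Lemma \ref{commutelemma1}.
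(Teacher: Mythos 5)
Your proof is correct, and it is essentially the paper's own argument: both deduce the claim from Lemma \ref{commutelemma1} by substituting for $b$ an element proportional to $b^{-1}$ and then observing that commuting with that substitute is the same as commuting with $b$. The only (cosmetic) difference is that the paper substitutes $b_q^\star=N(q)\,b^{-1}$, which is annihilated by $q$ itself, thereby avoiding the introduction of the normalized reciprocal polynomial $\tilde q$; your normalization works just as well.
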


\begin{proof}
Let us use the same notation regarding $a_p^\star$ and $b_q^\star$ as in Lemma \ref{commutelemma1}.
Noting that $q(b_q^\star)=0$, we deduce from Lemma \ref{commutelemma1} that
both $a$ and $b_q^\star$ commute with $ab+b_q^\star a_p^\star =ab+N(p)N(q)\, b^{-1}a^{-1}=ab+N(p)N(q)\,(ab)^{-1}$.
We conclude by noting that $b=(\tr q)\,1_\calA-b_q^\star$.
\end{proof}

We are now ready to prove Proposition \ref{homothetieprop2}:

\begin{proof}[Proof of Proposition \ref{homothetieprop2}]
Let $a,b,c$ belong to $\calA$, be annihilated respectively by $p_1,p_2,p_3$ and satisfy
$abc=\lambda\cdot 1_\calA$.

Without loss of generality, we assume that $p_1,p_2,p_3$ are all monic, and we
set $\alpha:=N(p_1)$, $\beta:=N(p_2)$ and $\gamma:=N(p_3)$.
Assuming that $\lambda^2  \neq \alpha\beta\gamma$, we shall prove that $\lambda$ is a $(p_1,p_2,p_3)$-product in $\F$.
By Lemma \ref{commutelemma2}, we know that both $b$ and $c$ commute with
$u:=(bc)+\beta \gamma (bc)^{-1}$.
Yet, as $ bc=\lambda\,a^{-1}$ we get
$$u=\lambda a^{-1}+\beta \gamma \lambda^{-1} a=
\lambda \alpha^{-1} a_{p_1}^\star +\beta \gamma \lambda^{-1} a=\lambda \alpha^{-1} \tr(p_1)\,1_\calA
+(-\lambda \alpha^{-1}+\beta \gamma \lambda^{-1})\, a.$$
Since $\lambda^2  \neq \alpha\beta\gamma$, we have
$-\lambda \alpha^{-1}+\beta \gamma \lambda^{-1} \neq 0$ and we deduce that $b$ and $c$ commute with $a$.
Then $\lambda\,1_\calA=bac$, and hence applying what we have just found in this new setting yields that $c$ commutes with $b$.

We are about to conclude. Denote by $\calB$ the subalgebra of $\calA$ generated by $a,b,c$. Since $a,b,c$ have non-zero annihilating polynomials
and are commuting elements, we see that $\calB$ is finite-dimensional. Set $\widetilde{a} : x \in \calB \mapsto ax$,
$\widetilde{b} : x \in \calB \mapsto bx$ and $\widetilde{c} : x \in \calB \mapsto cx$: these are
commuting endomorphisms of the (non-zero) finite-dimensional $\F$-vector space $\calB$ that are respectively annihilated by the split polynomials $p_1,p_2,p_3$,
and in particular they are commuting triangularizable endomorphisms.
Hence, $\widetilde{a}$, $\widetilde{b}$ and $\widetilde{c}$ have a common eigenvector $x \in \calB \setminus \{0\}$, and more precisely
$ax=\omega_1\,x$, $bx=\omega_2\,x$ and $cx=\omega_3\,x$ for respective roots $\omega_1,\omega_2,\omega_3$ of $p_1,p_2,p_3$.
It follows that $\lambda x=(\omega_1 \omega_2\omega_3) x$, and hence $\lambda=\omega_1 \omega_2\omega_3$.
Therefore, $\lambda$ is a $(p_1,p_2,p_3)$-product in $\F$.

%
\end{proof}

\subsection{The invariant subspace lemma}

\begin{lemma}[Invariant Subspace Lemma]\label{invariantsubspacelemma}
Let $V$ be an infinite-dimensional vector space.
Let $a,b,c$ be quadratic automorphisms of $V$. Let $\lambda \in \F \setminus \{0\}$ and $w \in \End(V)$ be such that
$\lambda\,\id_V+w=abc$. Let $W$ be a linear subspace of $V$ that includes $\im w$.
Then there exists a linear subspace $\overline{W}$ of $V$
that includes $W$, is stable under $a$, $b$ and $c$, and satisfies
$\dim \overline{W}\leq 8\,\dim W$.
In particular, $\dim \overline{W}<\dim V$ whenever $V$ is infinite-dimensional and $\dim W < \dim V$.
\end{lemma}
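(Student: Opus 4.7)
The plan is to define $\overline{W} := W + a(W) + b(W) + c(W)$, which has dimension at most $4\dim W$ and hence at most $8\dim W$. The final dimension claim will then follow by standard cardinal arithmetic: if $\dim W$ is finite, then so is $4\dim W$, whence $4\dim W < \dim V$; if $\dim W$ is infinite, then $4\dim W = \dim W < \dim V$. The bulk of the work is to verify that $\overline W$ is stable under each of $a$, $b$, $c$, which by linearity reduces to checking that $xy(W) \subseteq \overline W$ for every pair $x, y \in \{a, b, c\}$.

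The cases split naturally into three. First, the diagonal pairs $x = y$: each generator satisfies a quadratic relation $x^2 = \alpha_1 x + \alpha_0 \id_V$ with $\alpha_0 \neq 0$ (as $x$ is invertible), so $x^2(W) \subseteq W + x(W) \subseteq \overline W$, and as a byproduct $x^{-1} \in \mathrm{span}(\id_V, x)$. Second, the cyclic products $ab, bc, ca$: the relation $abc = \lambda \id_V + w$ yields $ab = \lambda c^{-1} + wc^{-1}$, and since $\im(wc^{-1}) \subseteq W$ and $c^{-1}(W) \subseteq W + c(W)$, one finds $ab(W) \subseteq W + c(W)$. The conjugate identities $bca = a^{-1}(abc)a = \lambda\id_V + a^{-1}wa$ and $cab = c(abc)c^{-1} = \lambda\id_V + cwc^{-1}$ analogously give $bc(W) \subseteq W + a(W)$ and $ca(W) \subseteq W + b(W) + c(W)$, all of which lie in $\overline W$.

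The third case---the reverse-cyclic products $ac, ba, cb$---is the main obstacle, since $abc = \lambda\id_V + w$ is orientation-sensitive and does not directly constrain them. The trick is to splice two cyclic products via a quadratic relation on the middle factor: writing $b^2 = \beta_1 b + \beta_0 \id_V$ with $\beta_0 \neq 0$, one obtains $(ab)(bc) = ab^2 c = \beta_1\, abc + \beta_0\, ac$, so
\[
ac = \beta_0^{-1}\bigl((ab)(bc) - \beta_1\, abc\bigr).
\]
Applied to $W$, the right-hand side decomposes into pieces already controlled: $abc(W) \subseteq W$, while $(ab)(bc)(W) \subseteq ab(bc(W)) \subseteq ab(W + a(W))$, and the only new term $ab \cdot a(W)$ is handled by applying $ab = \lambda c^{-1} + wc^{-1}$ together with $c^{-1}a(W) \subseteq a(W) + ca(W) \subseteq \overline W$. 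Entirely analogous identities $(bc)(ca) = \gamma_1\, bca + \gamma_0\, ba$ and $(ca)(ab) = \alpha_1\, cab + \alpha_0\, cb$ (with $\gamma_0, \alpha_0 \neq 0$) deliver $ba(W)$ and $cb(W)$ inside $\overline W$, completing the invariance verification.
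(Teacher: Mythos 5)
Your proof is correct, and it takes a genuinely different (and in fact sharper) route than the paper's: you get $\dim\overline{W}\leq 4\dim W$ rather than $8\dim W$. The paper defines $\overline{W}$ as $W+a(W)+b(W)+c(W)+\sum_{(e,f)}ef(W)$, reduces this to eight summands using only $ab(W)\subseteq W+c(W)$ and $bc(W)\subseteq W+a(W)$, proves stability under $a$ by a fairly heavy expansion (e.g.\ writing $ac=a(abc-w)c\in a^2bc^2+a\calH$ and unwinding $aca$ and $acb$), and then transfers stability from $a$ to $c$ via the observation that $(abc)^{-1}=\id_V+w'$ with $\im w'=\im w$, so that the analogous space built from $a^{-1},b^{-1},c^{-1}$ coincides with $\overline{W}$. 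You instead show directly that the four-term space $W+a(W)+b(W)+c(W)$ is already invariant, using two ingredients absent from the paper: the conjugation identity $cab=c(abc)c^{-1}=\lambda\,\id_V+cwc^{-1}$, which places $ca(W)$ inside $W+b(W)+c(W)$ (the paper keeps $ca(W)$ as a separate summand precisely because it lacks this), and the splicing identity $ab\cdot bc=\beta_1\,abc+\beta_0\,ac$, which disposes of the reverse-cyclic products in one stroke. I verified all nine products $xy(W)$ land in your $\overline{W}$. One point worth making explicit: the justification ``$\alpha_0\neq 0$ as $x$ is invertible'' is a little quick, since an invertible element can a priori be annihilated by $t^2-\alpha_1 t$; but then it equals $\alpha_1\,\id_V$ and is annihilated by $(t-\alpha_1)^2$, so one may always renormalize the annihilating quadratic to have nonzero constant term, which is all your splicing step needs.
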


\begin{proof}
Replacing $a$ with $\lambda^{-1}a$ and $w$ with $\lambda^{-1}w$, we can assume that $\lambda=1$.
Let us set
$$\overline{W}:=W+a(W)+b(W)+c(W)+\underset{(e,f) \in \{a,b,c\}^2}{\sum} ef(W).$$
We start by proving that
\begin{equation}\label{reduction}
\overline{W}=W+a(W)+b(W)+c(W)+ba(W)+cb(W)+ac(W)+ca(W),
\end{equation}
which will yield $\dim \overline{W} \leq 8 \dim W$.

Throughout, we denote by $\calH$ the right-sided ideal of $\End(V)$ consisting of its
elements whose range is included in $W$. Note that $w \in \calH$.

First of all, since $a$ is quadratic, we have $a^2(W) \subset W+a(W)$,
and likewise $b^2(W) \subset W+b(W)$ and $c^2(W) \subset W+c(W)$.

Next, since $c$ is quadratic we find $c^{-1} \in \Vect(\id_V,c)$, and hence
\begin{equation}\label{ab}
ab=(abc)c^{-1}=c^{-1}+w c^{-1} \in \Vect(\id_V,c)+\calH.
\end{equation}
In particular $ab(W) \subset W+c(W)$.
Likewise, since $a$ is quadratic,
\begin{equation}\label{bc}
bc=a^{-1}(abc)=a^{-1}+a^{-1}w \in \Vect(\id_V,a)+a\calH+\calH,
\end{equation}
and hence $bc(W) \subset W+a(W)$.
This yields identity \eqref{reduction}.

In the next part of the proof, we show that $a$ maps all spaces $ba(W)$, $cb(W)$, $ac(W)$ and $ca(W)$ into
$\overline{W}$, which will show that $a$ stabilizes $\overline{W}$.
First of all $a(ac)=a^2 c \in \Vect(c,ac)$ because $a$ is quadratic. Hence, $a(ac(W)) \subset \overline{W}$.
Next, by right-multiplying by $a$ in \eqref{ab}, we find
$$aba \in \Vect(a,ca)+\calH,$$
and hence $a(ba(W)) \subset W+a(W)+ca(W) \subset \overline{W}$.
Finally, starting from $\id_V=abc-w$, we find
$$ac=a(abc-w)c\in a^2bc^2+a\calH.$$
Since $a$ and $c$ are quadratic and $abc=\id_V+w$, this yields
$$ac \in \Vect(abc,ab,bc,b)+a\calH \subset \Vect(\id_V,ab,bc,b)+a \calH+\calH.$$
It follows that
$$aca \in \Vect(a,aba,bca,ba)+a \calH+\calH \subset \Vect(\id_V,a,bca,ca,ba)+a \calH+\calH$$
and
$$acb \in \Vect(b,ab^2,bcb,b^2)+a \calH+\calH \subset \Vect(\id_V,a,b,ab,bcb)+a \calH+\calH.$$
By multiplying by $a$ or $b$ on the right in \eqref{bc}, we find
$$bca \in \Vect(a,a^2)+a\calH+\calH \subset \Vect(\id_V,a)+a\calH+\calH \quad \text{and}
\quad bcb \in \Vect(b,ab)+a\calH+\calH,$$ and we conclude that
$$aca \in \Vect(\id_V,a,ba,ca)+a \calH+\calH \quad \text{and} \quad
acb \in \Vect(\id_V,a,b,ab)+a \calH+\calH.$$
This yields $a(ca(W)) \subset \overline{W}$ and $a(cb(W)) \subset \overline{W}$, and we conclude that
$a$ stabilizes $\overline{W}$.
Since $a$ is quadratic, it follows that $a^{-1}$ also stabilizes $\overline{W}$.

Now, we use an invariance trick to see that $c$ also stabilizes $\overline{W}$.
The inverse of $abc$ reads $\id_V+w'$ for some $w' \in \End(V)$. Then $w+w'+ww'=0=w'w+w+w'$, which leads to $\im w' \subset \im w$
and $\im w \subset \im w'$, and then to $\im w=\im w'$. We also note that
$c^{-1}b^{-1}a^{-1}=\id_V+w'$, and that $c^{-1}$, $b^{-1}$ and $a^{-1}$ are quadratic.
In this situation, we deduce from the above that the subspace
$$\overline{W}':=W+a^{-1}(W)+b^{-1}(W)+c^{-1}(W)+\underset{(e,f) \in \{a^{-1},b^{-1},c^{-1}\}^2}{\sum} ef(W)$$
is stable under $c^{-1}$. However, since $a,b,c$ are quadratic, one sees that $\overline{W}' \subset \overline{W}$,
and symmetrically $\overline{W} \subset \overline{W'}$. Therefore, $\overline{W}$ is stable under $c^{-1}$, and hence under
$c$ because $c$ is quadratic.

Noting that $\overline{W}$ is stable under $abc=\id_V+w$ because it includes $\im w$, we finally obtain that
$\overline{W}$ is stable under $b=a^{-1}(abc) c^{-1}$. This completes the proof.
\end{proof}

\subsection{Proof of Theorem \ref{dominanteigenvalueCN}}

Here, we derive Theorem \ref{dominanteigenvalueCN} from the preceding two results.
Let $p_1,p_2,p_3$ be split non-derogatory polynomials of $\F[t]$ with degree $2$,
and let $u$ be an endomorphism of an infinite-dimensional vector space $V$.
Assume that $u$ has a dominant eigenvalue $\lambda$ and that there exist endomorphisms $a,b,c$ of $V$
such that $u=abc$ and $p_1(a)=p_2(b)=p_3(c)=0$.

Set $w:=u-\lambda\,\id_V$.
By Lemma \ref{invariantsubspacelemma} applied to $W:=\im w$, there exists a linear subspace $\overline{W}$ of
$V$ that includes $\im w$, is stable under $a$, $b$ and $c$, and whose dimension is less than the one of $V$.
It follows that $a,b,c$ induce endomorphisms of the infinite-dimensional quotient space $\overline{V}:=V/\overline{W}$ whose
product equals $\lambda\id_{\overline{V}}$ and that are annihilated by $p_1$, $p_2$ and $p_3$, respectively.
By Proposition \ref{homothetieprop2}, we deduce that $\lambda$ is acceptable for $(p_1,p_2,p_3)$.
This completes the proof of Theorem \ref{dominanteigenvalueCN}.

\section{The case of the sum of a scalar multiple of the identity with a finite-rank endomorphism}\label{finiteranksection}

In this section, we give a partial result for the problem of determining when
an \emph{automorphism} of the form $\lambda \id_V+w$, where $\lambda$ is a scalar and $w$ is a finite-rank endomorphism of $V$,
is a $(p_1,p_2,p_3)$-product.

The following definition is relevant to this problem:

\begin{Def}
Let $A$ be an invertible matrix with entries in $\F$.
Let $p_1,p_2,p_3$ be non-derogatory split polynomials with degree $2$ over $\F$.
Let $\lambda \in \F$ be acceptable for $(p_1,p_2,p_3)$.
We say that $A$ is \textbf{a $(p_1,p_2,p_3)$-product $\lambda$-stably} if
there exists a non-negative integer $q$ such that the block-diagonal matrix $A \oplus \lambda I_q$ is a $(p_1,p_2,p_3)$-product.
\end{Def}

Note that, in the course of the proof of Proposition \ref{homothetieprop1}, we have shown that if $\lambda$ is acceptable for $(p_1,p_2,p_3)$,
then one of the matrices $\lambda I_1$ and $\lambda I_2$ is a $(p_1,p_2,p_3)$-product.
Hence, if we have an integer $q$ such that the block-diagonal matrix $A \oplus \lambda I_q$ is a $(p_1,p_2,p_3)$-product,
then $A \oplus \lambda I_{q+r}$ is a $(p_1,p_2,p_3)$-product for every even positive integer $r$
(and also for every odd positive integer $r$ if $\lambda$ is a $(p_1,p_2,p_3)$-product in $\F$).

Next, to any finite-rank endomorphism $w$ of $V$ can be attached a similarity class of square matrices as follows:
we choose a minimal (finite-dimensional) linear subspace $W$ of $V$ such that $\im w \subset W$ and $W+\Ker w=V$.
Then the similarity class of matrices that is attached to the induced endomorphism $w_{|W}$ does not depend on the choice
of $W$: we denote this similarity class by $[w]$. Moreover, if $W'$ is an arbitrary finite-dimensional linear subspace of
$V$ such that $\im w \subset W'$ and $W'+\Ker w=V$, any matrix that represents $w_{|W'}$
is similar to $M \oplus 0_q$ for some $M \in [w]$ and some non-negative integer $q$.

With that in mind, we can state a partial result:

\begin{theo}\label{theofiniterank}
Let $V$ be an infinite-dimensional vector space,
$w$ be a finite-rank endomorphism of $V$, and $\lambda$ be a nonzero scalar.
Let $p_1,p_2,p_3$ be non-derogatory split polynomials of $\F[t]$ with degree $2$.
Choose a matrix $A$ in $[w]$ with $n$ rows.
Then the following conditions are equivalent:
\begin{enumerate}[(i)]
\item The endomorphism $\lambda \id_V+w$ is a $(p_1,p_2,p_3)$-product.
\item The scalar $\lambda$ is acceptable for $(p_1,p_2,p_3)$, and the
matrix $A+\lambda I_n$ is a $(p_1,p_2,p_3)$-product $\lambda$-stably.
\end{enumerate}
\end{theo}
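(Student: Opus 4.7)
The plan is to prove both implications by reducing to a finite-dimensional problem through the Invariant Subspace Lemma \ref{invariantsubspacelemma}, the scalar-multiple criteria of Propositions \ref{homothetieprop1} and \ref{homothetieprop2}, and the direct-sum Remark \ref{directsumremark}. Throughout, I will fix a minimal finite-dimensional subspace $W \subset V$ with $\im w \subset W$ and $W + \Ker w = V$ (so $\dim W = n$ and the matrix of $w|_W$ represents $A$), using that $\Ker w$ has finite codimension in $V$ and is therefore infinite-dimensional.

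For the direction (ii) $\Rightarrow$ (i), I would enlarge $W$ to $W' := W \oplus U$ for some $q$-dimensional subspace $U$ of $\Ker w$ disjoint from $W$ (available because $\Ker w$ is infinite-dimensional), so that $\dim W' = n+q$, $\im w \subset W'$, and $W' + \Ker w = V$. By the invariance property of the class $[w]$, the matrix of $w|_{W'}$ is similar to $A \oplus 0_q$, so $\lambda\,\id_{W'} + w|_{W'}$ has matrix similar to $(A + \lambda I_n) \oplus \lambda I_q$, which by hypothesis is a $(p_1,p_2,p_3)$-product. Picking a complement $V' \subset \Ker w$ of $W'$ in $V$ produces a splitting $V = W' \oplus V'$ on which both summands are stable under $\lambda\,\id_V + w$, with $w$ vanishing on $V'$. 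Since $V'$ is infinite-dimensional and $\lambda$ is acceptable, Proposition \ref{homothetieprop1} yields that $\lambda\,\id_{V'}$ is a $(p_1,p_2,p_3)$-product, and Remark \ref{directsumremark} concludes this direction.

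For the direction (i) $\Rightarrow$ (ii), I would factor $\lambda\,\id_V + w = abc$ with $p_1(a) = p_2(b) = p_3(c) = 0$ and apply the Invariant Subspace Lemma to the above $W$. This furnishes a finite-dimensional subspace $\overline{W} \supset W$, stable under $a$, $b$, and $c$, with $\dim \overline{W} \leq 8n$, and still satisfying $\im w \subset \overline{W}$ and $\overline{W} + \Ker w = V$. Consequently, the matrix of $w|_{\overline{W}}$ is similar to $A \oplus 0_r$ for $r := \dim \overline{W} - n$, and the restrictions of $a, b, c$ to $\overline{W}$ provide a factorization of $\lambda\,\id_{\overline{W}} + w|_{\overline{W}}$ with the correct annihilators, proving that $(A + \lambda I_n) \oplus \lambda I_r$ is a $(p_1,p_2,p_3)$-product and hence that $A + \lambda I_n$ is one $\lambda$-stably. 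On the other hand, $V/\overline{W}$ is infinite-dimensional (since $\overline{W}$ has finite dimension), and $a, b, c$ induce automorphisms on it whose product equals $\lambda\,\id_{V/\overline{W}}$ because $\im w \subset \overline{W}$, so Proposition \ref{homothetieprop2} delivers the acceptability of $\lambda$.

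I do not anticipate a serious obstacle: the substantial work is already packaged in the Invariant Subspace Lemma, and the remainder amounts to careful bookkeeping on the similarity class $[w]$, essentially selecting auxiliary subspaces so that the block pattern $A \oplus 0_\ast$ surfaces on the relevant invariant piece.
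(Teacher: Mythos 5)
Your proposal is correct and takes essentially the same route as the paper: both directions reduce to a finite-dimensional block via the Invariant Subspace Lemma, Proposition \ref{homothetieprop1} and Remark \ref{directsumremark}, exactly as in the paper's proof. The only cosmetic difference is that the paper obtains the acceptability of $\lambda$ by citing Theorem \ref{dominanteigenvalueCN}, whereas you re-derive it inline through the quotient $V/\overline{W}$ and Proposition \ref{homothetieprop2} --- which is precisely how that theorem is proved in the paper anyway.
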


\begin{proof}
Set $u:=\lambda \id_V+w$.

Assume first that condition (ii) holds.
Choose a non-negative integer $q$ such that
$(A +\lambda I_n) \oplus \lambda I_q$ is a $(p_1,p_2,p_3)$-product.
We have a finite-dimensional linear subspace $W$ of $V$ such that $\im w \subset W$, $W+\Ker w=V$,
and $A$ represents the endomorphism of $W$ induced by $w$.
Then we can split $V=W \oplus W'$ where $W' \subset \Ker w$.
Since $W'$ is infinite-dimensional, we can further split $W'=W'_1 \oplus W'_2$ so that $W'_1$ has dimension $q$.
Hence, $V=W \oplus W'_1 \oplus W'_2$ and $W'_2$ is infinite-dimensional.
Since the matrix $(A+\lambda I_n) \oplus \lambda I_q$ is a $(p_1,p_2,p_3)$-product,
the automorphism $u_{|W \oplus W'_1}$ is a $(p_1,p_2,p_3)$-product.
Moreover, by Proposition \ref{homothetieprop1}, the
endomorphism $\lambda \id_{W'_2}$ of $W'_2$ is a $(p_1,p_2,p_3)$-product.
Since $u_{|W'_2}=\lambda \id_{W'_2}$, we deduce from Remark \ref{directsumremark} that
$u$ is a $(p_1,p_2,p_3)$-product.

Conversely, assume that condition (i) holds.
By Theorem \ref{dominanteigenvalueCN}, we already know that $\lambda$ is acceptable for $(p_1,p_2,p_3)$.

Let $a,b,c$ be endomorphisms of $V$ such that
$u=abc$ and $p_1(a)=p_2(b)=p_3(c)=0$.
Choosing a complementary subspace $G$ of $\Ker w$ in $V$ and applying the Invariant Subspace Lemma to the finite-dimensional subspace
$W:=\im w+G$, we obtain a finite-dimensional linear subspace $W'$
of $V$ that is stable under $a,b,c$, includes $W$ and satisfies $W'+\Ker w=V$.
Then we can split $V=W' \oplus V'$ with $V' \subset \Ker w$.
It follows that, for some non-negative integer $q$, the matrix
$(A+\lambda I_n) \oplus (\lambda I_q)$ represents the endomorphism $u'$ of $W'$ induced by $u$.
Since $a,b,c$ stabilize $W'$, we see that $u'$ is the product of the respective induced endomorphisms $a',b',c'$.
Since $p_1(a')=p_2(b')=p_3(c')=0$, it turns out that $u'$ is a $(p_1,p_2,p_3)$-product. In turn, this shows that
$(A+\lambda I_n) \oplus (\lambda I_q)$
is a $(p_1,p_2,p_3)$-product in the matrix algebra $\Mat_{n+q}(\F)$.
Hence, $A+\lambda I_n$ is a $(p_1,p_2,p_3)$-product $\lambda$-stably.
\end{proof}

Given a nonzero scalar $\lambda$ that is acceptable for $(p_1,p_2,p_3)$, the determinant yields a
natural obstruction against being a $(p_1,p_2,p_3)$-product $\lambda$-stably.
Let indeed $A \in \Mat_n(\F)$ be a $(p_1,p_2,p_3)$-product $\lambda$-stably.
Obviously the determinant of $A$ must belong to the multiplicative semigroup generated by
$\lambda$ and by the respective roots of $p_1,p_2,p_3$. In particular:
\begin{itemize}
\item If $p_1=t^2-1$ and $p_2=p_3=(t-1)^2$ or $p_2=p_3=t^2-1$, then
either $\lambda\in \{-1,1\}$ and the determinant of $A$ is required to be $\pm 1$,
or $\lambda$ has order $4$ and the determinant of $A$ is required to belong to the subgroup generated by $\lambda$.
\item If $p_1=p_2=p_3=(t-1)^2$, then either $\lambda=1$ and the determinant of $A$ must equal $1$, or $\lambda=-1$
and the determinant of $A$ must equal $\pm 1$.
\item If $p_1=p_2=t^2-1$ and $p_3=(t-1)^2$, then the determinant of $A$ must equal $\pm 1$.
\end{itemize}
In those three (actually, four!) cases, it has been shown with much difficulty in \cite{dSP3involutions} that those conditions
are actually \emph{sufficient} for $A$ to be a $(p_1,p_2,p_3)$-product $\lambda$-stably.
This leads to the following three results:

\begin{theo}
Let $\lambda \in \F \setminus \{0\}$ be such that $\lambda^4=1$.
Let $A \in \Mat_n(\F)$. The following conditions are equivalent.
\begin{enumerate}[(i)]
\item $A$ is a $(t^2-1,t^2-1,t^2-1)$-product $\lambda$-stably;
\item $A$ is a $\bigl(t^2-1,(t-1)^2,(t-1)^2\bigr)$-product $\lambda$-stably;
\item $\det A$ belongs to the multiplicative subgroup of $\F^*$ generated by $-1$ and $\lambda$.
\end{enumerate}
\end{theo}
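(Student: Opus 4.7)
The plan is to split the equivalence into two elementary necessary implications (i)$\Rightarrow$(iii) and (ii)$\Rightarrow$(iii), established by a determinant computation, and then to invoke the sufficiency results of \cite{dSP3involutions} quoted in the paragraph preceding the statement for the two converse implications (iii)$\Rightarrow$(i) and (iii)$\Rightarrow$(ii).

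For the necessary directions, suppose that $A \oplus \lambda I_q = BCD$ for some non-negative integer $q$ and some invertible matrices $B,C,D \in \GL_{n+q}(\F)$ annihilated by the relevant polynomials. In case (i), each of $B,C,D$ is an involution, so its determinant lies in $\{-1,1\}$. In case (ii), $B$ is an involution and hence $\det B \in \{-1,1\}$, while $C$ and $D$ are unipotent with index $2$, so $\det C = \det D = 1$. In both cases the determinant of the product belongs to $\{-1,1\}$, yielding
$$\det A \cdot \lambda^q \in \{-1,1\}, \quad \text{hence} \quad \det A = \pm \lambda^{-q}.$$
Since $\lambda^4 = 1$, the scalar $\lambda^{-q}$ lies in the finite cyclic group $\langle \lambda\rangle$, and consequently $\det A$ belongs to the subgroup $\langle -1, \lambda\rangle$ of $\F^*$, which is (iii).

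For the two converse directions, first note that the norms $N(t^2-1) = -1$ and $N((t-1)^2) = 1$ combined with the hypothesis $\lambda^4 = 1$ show that $\lambda$ is acceptable for both triples under consideration, so the notion of being a $(p_1,p_2,p_3)$-product $\lambda$-stably is meaningful in each case. Whenever $\det A$ lies in $\langle -1, \lambda\rangle$, the existence of some $q \geq 0$ making $A \oplus \lambda I_q$ a $(p_1,p_2,p_3)$-product for the relevant triple is exactly the content of two of the ``three (actually four)'' sufficiency statements established with considerable effort in \cite{dSP3involutions}; quoting them completes the proof.

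The main obstacle for this statement therefore lies entirely inside \cite{dSP3involutions}, whose proofs amount to a detailed matrix-theoretic construction of explicit factorizations in the stable general linear group. In the present article we only need the elementary determinant bookkeeping above to glue together the necessary and sufficient halves, and no additional difficulty is encountered.
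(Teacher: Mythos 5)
Your proposal is correct and follows exactly the route the paper takes: the necessity of the determinant condition is the "natural obstruction" computation given in the discussion preceding the theorem, and the sufficiency is delegated entirely to the decomposition results of \cite{dSP3involutions}, which the paper likewise quotes without reproving. Your additional check that $\lambda^4=1$ makes $\lambda$ acceptable for both triples (so that the stable-product notion is well defined) is a small but legitimate point that the paper leaves implicit.
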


\begin{theo}
Let $A \in \Mat_n(\F)$ and $\lambda \in \{1,-1\}$.
The matrix $A$ is a $\bigl((t-1)^2,(t-1)^2,(t-1)^2\bigr)$-product $\lambda$-stably if and only $\det A$ is a power of $\lambda$.
\end{theo}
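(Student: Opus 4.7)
The plan is to split the equivalence into its two implications, treating the necessity by an elementary determinant computation and deferring the sufficiency to the stable-decomposition results of \cite{dSP3involutions}.

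For the implication (i) $\Rightarrow$ (ii), I would argue as follows. Suppose there exists $q \geq 0$ such that $A \oplus \lambda I_q$ decomposes as $u_1 u_2 u_3$ in $\Mat_{n+q}(\F)$ with $(u_i - I_{n+q})^2 = 0$ for each $i \in \{1,2,3\}$. Then each $u_i$ is unipotent, hence $\det u_i = 1$. Taking determinants yields $(\det A)\,\lambda^q = 1$, so that $\det A = \lambda^{-q}$; since $\lambda^{-1} = \lambda$ when $\lambda \in \{1,-1\}$, this shows $\det A = \lambda^q$ is a power of $\lambda$. This also matches the general obstruction recorded in the discussion preceding the theorem: the multiplicative semigroup generated by $\lambda$ and the roots of the $p_i$ is here $\{\lambda^k \mid k \in \N\}$.

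For the converse, I would appeal directly to the technical decomposition theorems of \cite{dSP3involutions}. Those results precisely characterize which matrices decompose as products of three unipotent elements of index $2$ in the stable general linear group, and the characterization is exactly the determinant condition. Thus, given $A$ with $\det A$ equal to a power of $\lambda$, the cited work furnishes a non-negative integer $q$ together with three matrices $u_1, u_2, u_3 \in \Mat_{n+q}(\F)$, each annihilated by $(t-1)^2$, whose product equals $A \oplus \lambda I_q$. (When $\lambda = 1$, the required hypothesis is $\det A = 1$; when $\lambda = -1$, it is $\det A \in \{1,-1\}$; in both cases this is exactly the hypothesis of (ii).)

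The genuine difficulty lies entirely in the sufficiency, which rests on the heavy stable-matrix decomposition theorems established in \cite{dSP3involutions}. Once that reference is at our disposal, the present theorem is a dictionary translation within the $\lambda$-stable framework set up in Section \ref{finiteranksection}, and the proof reduces to checking that the determinant obstruction already identified above coincides with the sufficient condition proved there.
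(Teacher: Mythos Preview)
Your proposal is correct and matches the paper's own treatment essentially verbatim: the paper establishes necessity via the determinant obstruction described in the paragraph preceding the three theorems, and explicitly defers sufficiency to the results of \cite{dSP3involutions} (``it has been shown with much difficulty in \cite{dSP3involutions} that those conditions are actually \emph{sufficient}''). There is nothing to add.
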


\begin{theo}
Let $A \in \Mat_n(\F)$ and let $\lambda \in \{1,-1\}$.
Then $A$ is a $\bigl((t-1)^2,t^2-1,t^2-1\bigr)$-product $\lambda$-stably if and only $\det A=\pm 1$.
\end{theo}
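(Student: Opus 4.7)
The plan is to establish the equivalence in two directions: the necessary condition by a direct determinant computation, and the sufficient condition by invoking the recent stable classification of \cite{dSP3involutions}. The first step would be to verify that $\lambda \in \{1,-1\}$ is acceptable for the triple $\bigl((t-1)^2, t^2-1, t^2-1\bigr)$, so that the notion of ``$(p_1,p_2,p_3)$-product $\lambda$-stably'' is meaningful here. Since $N((t-1)^2) = 1$ and $N(t^2-1) = -1$, one has $\lambda^2 = 1 = N((t-1)^2)\, N(t^2-1)\, N(t^2-1)$, which settles acceptability via the norm condition. (One may also note, as a sanity check, that both values $\lambda = 1$ and $\lambda = -1$ factor as products of roots of the three polynomials.)

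For the necessary direction, I would suppose that $A \oplus \lambda I_q = B C D$ in $\Mat_{n+q}(\F)$ with $(B-I)^2 = 0$, $C^2 = I$, and $D^2 = I$. Since $B$ is unipotent one has $\det B = 1$, and since $C$ and $D$ are involutions, $\det C, \det D \in \{-1, 1\}$. Consequently $\lambda^q \det A = \det B \cdot \det C \cdot \det D \in \{-1,1\}$, and as $\lambda^q \in \{-1,1\}$ for $\lambda \in \{-1,1\}$, it follows that $\det A = \pm 1$.

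For the sufficient direction, given $A \in \Mat_n(\F)$ with $\det A = \pm 1$ and $\lambda \in \{-1,1\}$, I would invoke the core stable decomposition result of \cite{dSP3involutions}, whose content for the triple $\bigl((t-1)^2, t^2-1, t^2-1\bigr)$ is precisely the existence of an integer $q \geq 0$ such that $A \oplus \lambda I_q$ is a $\bigl((t-1)^2, t^2-1, t^2-1\bigr)$-product in $\Mat_{n+q}(\F)$. The principal obstacle is entirely imported from that reference: the proof there proceeds by an intricate case-by-case analysis of the conjugacy invariants of matrices arising as such triple products, and the present theorem amounts to no more than the acceptability check and the determinant bookkeeping above, together with this black-box application.
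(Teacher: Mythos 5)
Your proposal is correct and follows essentially the same route as the paper: the paper likewise obtains the necessity of $\det A=\pm 1$ from the observation that the determinant of a stable $(p_1,p_2,p_3)$-product must lie in the multiplicative semigroup generated by $\lambda$ and the roots of the $p_i$'s (which for this triple is $\{1,-1\}$), and imports sufficiency as a black box from \cite{dSP3involutions}. Your explicit acceptability check and determinant bookkeeping match the paper's argument.
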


Combining those three results with Theorems \ref{dominanteigenvalueCN}, \ref{theo3}, \ref{dominanteigenvalueCS} and \ref{theofiniterank} yields Theorems
\ref{theo3invol} to \ref{theo3mixed}.

In general, the condition that the determinant must belong to the multiplicative semigroup generated by the roots of the $p_i$'s and by
$\lambda$ is far from sufficient, even in the normalized situation where $1$ is a root of each polynomial $p_i$.
For example, assume that the multiplicative group $\F^*$ contains an element $x$ with infinite order, and take $p_1=p_2=p_3=(t-1)(t-x^2)$
and $\lambda=x^3$. Choose $A \in \GL_n(\F)$ having determinant $x^{12 n}$ and that does not have $x^6$ in its spectrum.
Assume that $A$ is a $(p_1,p_2,p_3)$-product $\lambda$-stably, and take a non-negative integer $r$ such that
$M:=A \oplus \lambda I_r$ is a $(p_1,p_2,p_3)$-product. Denote by $u : X \mapsto MX$ the corresponding automorphism of
$\F^{n+r}$. Then, applying the Invariant Subspace Lemma to $W:=\F^n \times \{0\}$
yields a linear subspace $W'$ of $\F^{n+r}$ that includes $W$, has dimension at most $8 n$,
is stable under $u$ and is such that the automorphism $u_{|W'}$
is a $(p_1,p_2,p_3)$-product. Setting $q:=\dim W'-n$, we see that $A \oplus \lambda I_q$
represents $u_{|W'}$ in some basis, and hence $A \oplus \lambda I_q$ is a $(p_1,p_2,p_3)$-product. Moreover,
$q \leq 7n$.

Next, let us take matrices $M_1,M_2,M_3$ of $\Mat_{n+q}(\F)$ such that $A \oplus \lambda I_{q}=M_1M_2M_3$ and $p_1(M_1)=p_2(M_2)=p_3(M_3)=0$.
Noting that $x^{12 n+3 q}=\det(A \oplus \lambda I_{q})=\det M_1\,\det M_2\,\det M_3$,
we deduce that, for all $i \in \lcro 1,3\rcro$, the sum of the (geometric) multiplicities of the eigenvalue $x^2$ for the $M_i$'s equals
$\frac{12n+3q}{2}$. However, since $\lambda \neq x^6$ and $x^6$ is no eigenvalue of $A$,
the corresponding eigenspaces have trivial intersection, which classically
leads to the sum of their dimensions being at most $2(n+q)$. Hence
$12n+3q \leq 4n+4q$, leading to $q \geq 8n$. This contradicts the fact that $q\leq 7n$.

For general values of the triple $(p_1,p_2,p_3)$, we are convinced that determining (in spectral terms) which matrices
over $\F$ are $(p_1,p_2,p_3)$-products $\lambda$-stably is hopeless.

\section{On elementary automorphisms}\label{elementarysection}

\subsection{Recognizing super-elementary automorphisms}

\begin{lemma}\label{basicelementarylemma}
Let $u$ be an automorphism of an infinite-dimensional vector space $V$,
and assume that $V=\Vect(u^k(x))_{k \in \Z}$ for some vector $x \in V$.
Then $u$ is super-elementary.
\end{lemma}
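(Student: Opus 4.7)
The plan is to argue directly from the definition of super-elementary: we must exhibit an isomorphism $V^u \cong \F[t,t^{-1}]$ of $\F[t,t^{-1}]$-modules.

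First, I would reformulate the hypothesis in module-theoretic language. The equality $V=\Vect(u^k(x))_{k \in \Z}$ says exactly that the $\F[t,t^{-1}]$-module $V^u$ is cyclic, generated by $x$. Concretely, consider the $\F[t,t^{-1}]$-linear map
$$\varphi : \F[t,t^{-1}] \longrightarrow V^u, \quad q(t) \longmapsto q(u)(x),$$
where $\F[t,t^{-1}]$ is viewed as a module over itself. The assumption makes $\varphi$ surjective, and hence induces an isomorphism $\F[t,t^{-1}]/\Ker \varphi \simeq V^u$.

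Next, I would invoke the fact that $\F[t,t^{-1}]$ is a principal ideal domain; indeed it is the localization of the PID $\F[t]$ at the multiplicative set of powers of $t$, and a localization of a PID is a PID. Therefore $\Ker \varphi$ is a principal ideal $(q)$ for some Laurent polynomial $q \in \F[t,t^{-1}]$. If I can show that $q=0$, then $V^u \simeq \F[t,t^{-1}]$ and $u$ is super-elementary by definition.

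The final step is to rule out $q \neq 0$ using the infinite-dimensionality of $V$. Multiplying by a suitable power of $t$, which is a unit of $\F[t,t^{-1}]$, I may assume that $q \in \F[t]$ satisfies $q(0) \neq 0$, so that $q$ is non-derogatory. In the quotient ring $\F[t,t^{-1}]/(q)$ the class of $t$ is already invertible (since $q(0) \neq 0$ implies $t$ divides $q(t)-q(0)$, giving an explicit inverse of $t$ modulo $q$), so $\F[t,t^{-1}]/(q)=\F[t]/(q)$, which has finite $\F$-dimension $\deg q$. Consequently $\dim_\F V = \deg q<\infty$, contradicting the assumption that $V$ is infinite-dimensional. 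Hence $q=0$ and we conclude $V^u \simeq \F[t,t^{-1}]$. There is no real obstacle in this argument; the only point requiring a moment's thought is the observation that $\F[t,t^{-1}]$ is a PID, after which the conclusion follows from a direct dimension count.
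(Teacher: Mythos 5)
Your proof is correct, but it takes a different route from the paper's. The paper argues directly at the level of linear algebra: assuming a nontrivial linear relation $\sum_{k=a}^{b}\lambda_k u^k(x)=0$ with $\lambda_a\lambda_b\neq 0$, it solves for $u^{b-a}(x)$ and for $u^{-1}(x)$ to show that the finite-dimensional subspace $\Vect\bigl(x,u(x),\dots,u^{b-a-1}(x)\bigr)$ is stable under both $u$ and $u^{-1}$, hence contains every $u^k(x)$ and therefore equals $V$, contradicting infinite-dimensionality. You instead package the hypothesis as saying that $V^u$ is a cyclic $\F[t,t^{-1}]$-module, invoke the fact that $\F[t,t^{-1}]$ is a PID to write the kernel of the evaluation map as a principal ideal $(q)$, and rule out $q\neq 0$ by computing $\dim_\F \F[t,t^{-1}]/(q)=\deg q<\infty$ (your normalization to $q\in\F[t]$ with $q(0)\neq 0$ and the identification $\F[t,t^{-1}]/(q)=\F[t]/(q)$ are both sound, and the degenerate case where $q$ is a unit is excluded since it would force $V=\{0\}$). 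Your argument is shorter modulo the imported fact that localizations of PIDs are PIDs, and it is structurally cleaner since it lands exactly on the definition of super-elementary as $V^u\simeq\F[t,t^{-1}]$; the paper's argument is more elementary and self-contained, and the computation it performs (a torsion element generates a finite-dimensional submodule stable under $u^{-1}$) is reused explicitly at the start of its section on stratifications, which is presumably why the author chose to spell it out by hand. Note also that you do not strictly need the full PID property: any single nonzero element of $\Ker\varphi$ already forces the cyclic module to be finite-dimensional, which is essentially the paper's observation recast in module language.
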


\begin{proof}
It suffices to prove that $(u^k(x))_{k \in \Z}$ is linearly independent.
Note that $x \neq 0$ otherwise $V=\{0\}$. Hence, all the terms of $(u^k(x))_{k \in \Z}$ are nonzero.
Assume that $(u^k(x))_{k \in \Z}$ is not linearly independent.
This yields integers $a<b$ and scalars $\lambda_a,\dots,\lambda_b$ such that
$\underset{k=a}{\overset{b}{\sum}} \lambda_k u^k(x)=0$ and $\lambda_a\lambda_b \neq 0$.
Then
$$u^{b-a}(x)=\sum_{k=0}^{b-a-1} -\frac{\lambda_{k+a}}{\lambda_b}\, u^k(x) \quad \text{and} \quad
u^{-1}(x)=\sum_{k=0}^{b-a-1} -\frac{\lambda_{k+a+1}}{\lambda_a}\, u^k(x),$$
which yields that the linear subspace $V':=\Vect\bigl(x,u(x),\dots,u^{b-a-1}(x)\bigr)$ is stable under both $u$ and $u^{-1}$.
Hence, $V'$ contains $u^k(x)$ for all $k \in \Z$, which contradicts the assumption that $V$ is infinite-dimensional.
\end{proof}

\begin{prop}\label{elementaryprop1}
Let $V$ be a vector space.
Let $(n_i)_{i \in \N}$ be a sequence of non-negative integers.
Let $(e_{i,j})_{i \in \N, \; 0 \leq j \leq n_i}$ be a basis of $V$.
For $k \in \N$, set $V_k:=\Vect(e_{i,j})_{0 \leq i \leq k, \; 0 \leq j \leq n_i.}$
Let $u\in \GL(V)$ act as follows on that basis:
\begin{enumerate}[(i)]
\item $u(e_{i,j})$ is collinear with $e_{i,j+1}$ for all $i \in \N$ and all $j \in \lcro 0,n_i-1\rcro$;
\item For all $i \in \N$, $u(e_{i,n_i})=\lambda_i \,e_{i+1,n_{i+1}}$ mod $V_i$ for some $\lambda_i \in \F \setminus \{0\}$.
\end{enumerate}
Then $u$ is super-elementary.
\end{prop}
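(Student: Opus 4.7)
My plan is to apply Lemma \ref{basicelementarylemma} with the cyclic vector $x := e_{0,0}$, so I need to show that $V = \Vect(u^k(e_{0,0}))_{k \in \Z}$. Throughout, let $S := \Vect(u^k(e_{0,0}))_{k \in \Z}$, which is obviously stable under both $u$ and $u^{-1}$. Note that $V$ is necessarily infinite-dimensional because its basis is indexed by the infinite set $\{(i,j) : i \in \N, \ 0 \leq j \leq n_i\}$, so the lemma applies as soon as we exhibit such a cyclic vector.

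The core of the argument is an induction on $i$ showing that $V_i \subset S$. For the base case $i = 0$, condition (i) yields scalars $c_0, \dots, c_{n_0 - 1} \in \F \setminus \{0\}$ with $u(e_{0,j}) = c_j \, e_{0,j+1}$ for $0 \leq j \leq n_0 - 1$, so that $e_{0,j+1} = c_j^{-1}\, u(e_{0,j})$; iterating, each $e_{0,j}$ for $0 \leq j \leq n_0$ belongs to $S$, hence $V_0 \subset S$. For the inductive step, assume $V_{i-1} \subset S$. In particular $e_{i-1, n_{i-1}} \in S$, and condition (ii) gives $u(e_{i-1, n_{i-1}}) = \lambda_{i-1} \, e_{i, n_i} + v$ for some $v \in V_{i-1} \subset S$; since $S$ is $u$-stable and $\lambda_{i-1} \neq 0$, we deduce $e_{i, n_i} \in S$. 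Then condition (i), applied backwards as $e_{i, j} = c_{i,j}^{-1} u^{-1}(e_{i, j+1})$ for suitable nonzero scalars $c_{i,j}$, yields $e_{i, n_i - 1}, \dots, e_{i, 0} \in S$ by descending iteration. Therefore $V_i \subset S$.

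Since $V = \underset{i \in \N}{\bigcup} V_i$, we conclude that $V \subset S$ and hence $V = S$. Lemma \ref{basicelementarylemma} then shows that $u$ is super-elementary, as required.

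The only mildly subtle point is the dual use of $u$ and $u^{-1}$: starting from $e_{0,0}$, positive iterates of $u$ only reach the ``top'' vectors $e_{i, n_i}$ of each column (modulo the lower columns, which must first be produced by the induction), while the ``bottom'' part $e_{i, 0}, \dots, e_{i, n_i - 1}$ of column $i$ is recovered by going \emph{backwards} from $e_{i, n_i}$ via $u^{-1}$ and condition (i). Once one sees this combined forward/backward trajectory, no real difficulty remains.
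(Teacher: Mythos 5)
Your proof is correct and follows essentially the same route as the paper's: exhibit a cyclic vector, show by induction on $i$ that each $V_i$ lies in the span of the $u^k$-orbit (using condition (ii) to jump to $e_{i,n_i}$ and condition (i), run backwards via $u^{-1}$, to fill in the rest of the column), then invoke Lemma \ref{basicelementarylemma}. The only cosmetic difference is the choice of cyclic vector ($e_{0,0}$ instead of the paper's $e_{0,n_0}$), which merely swaps the direction in which column $0$ is traversed.
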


\begin{proof}
Note that $V$ is infinite-dimensional.
We set $x:=e_{0,n_0}$ and we prove that $V$ equals the space $V':=\Vect(u^k(x))_{k \in \Z}$.
From there, the conclusion will follow from Lemma \ref{basicelementarylemma}. Note that $V'$ is stable under $u$ and $u^{-1}$.

First, let $i \in \N$ be such that $e_{i,n_i} \in V'$.
For all $j \in \lcro 0,n_i-1\rcro$, we deduce from condition (i) that $e_{i,j}$ is collinear with
$u^{-1}(e_{i,j+1})$. Since $V'$ is stable under $u^{-1}$, we deduce by downward induction that
it contains all vectors $e_{i,n_i}$, $e_{i,n_i-1}$, \ldots, $e_{i,0}$.

In particular, the case $i=0$ shows that $V_0 \subset V'$.
Now, we proceed by induction. Let $k \in \N$ be such that $V_k \subset V'$.
In particular $V'$ contains $e_{k,n_k}$ and hence it contains $u(e_{k,n_k})$.
By condition (ii) and since $V_k \subset V'$, we deduce that $V'$ contains $e_{k+1,n_{k+1}}$.
Then by the above we further deduce that $\Vect(e_{k+1,j})_{0 \leq j \leq n_{k+1}} \subset V'$, and finally
$V'$ includes $V_k+\Vect(e_{k+1,j})_{0 \leq j \leq n_{k+1}}=V_{k+1}$.

Hence, by induction, $V_k \subset V'$ for all $k \in \N$, and we conclude that $V'=V$.
\end{proof}

We will also need the following variation of Proposition \ref{elementaryprop1}:

\begin{prop}\label{elementaryprop2}
Let $V$ be a vector space.
Let $(n_i)_{0 \leq i \leq p}$ be a finite sequence (possibly void) of non-negative integers.
Let $(e_{i,j})_{0 \leq i \leq p, \; 0 \leq j \leq n_i}$ be a family (possibly void) of vectors of $V$, and
$(f_k)_{k \in \Z}$ be another family, such that the concatenation of those families yields a basis of $V$.
For $k \in \lcro 0,p\rcro$, set $V_k:=\Vect(e_{i,j})_{0 \leq i \leq k, \; 0 \leq j \leq n_i}$.
Let $u \in \GL(V)$ acts as follows on that basis:
\begin{enumerate}[(i)]
\item $u(e_{i,j})$ is collinear with $e_{i,j+1}$ for all $i \in \lcro 0,p\rcro$ and all $j \in \lcro 0,n_i-1\rcro$;
\item $u(f_k)$ is collinear with $f_{k+1}$ for all $k \in \Z$;
\item For all $i \in \lcro 1,p-1\rcro$, $u(e_{i,n_i})=\lambda_i\, e_{i+1,n_{i+1}}$ mod $V_i$ for some $\lambda_i \in \F \setminus \{0\}$;
\item $u(e_{p,n_p})=\mu f_0$ mod $V_p$ for some $\mu \in \F \setminus \{0\}$.
\end{enumerate}
Then $u$ is super-elementary.
\end{prop}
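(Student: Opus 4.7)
The plan is to adapt the argument for Proposition~\ref{elementaryprop1}: I will produce a vector $x$ such that the cyclic subspace $V' := \Vect(u^k(x))_{k \in \Z}$ equals $V$, and then invoke Lemma~\ref{basicelementarylemma}. Note that $V$ is automatically infinite-dimensional because it contains the independent family $(f_k)_{k \in \Z}$. Throughout, $V'$ is stable under both $u$ and $u^{-1}$.

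If the $(e_{i,j})$ family is void, I take $x := f_0$: condition~(ii) and its iterates show that $u^k(x)$ is a nonzero scalar multiple of $f_k$ for every $k \in \Z$, so $V' = V$ and Lemma~\ref{basicelementarylemma} concludes. From now on, I assume the $e$-family is nonempty, and I set $x := e_{0, n_0}$. Exactly as in the proof of Proposition~\ref{elementaryprop1}, downward induction on $j$ based on condition~(i) and the $u^{-1}$-stability of $V'$ shows that whenever $e_{i, n_i}$ lies in $V'$, the whole chain $e_{i,0}, \dots, e_{i, n_i}$ lies in $V'$. Applied to $i = 0$, this already gives $V_0 \subset V'$.

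The heart of the argument is then an induction on $k \in \lcro 0, p \rcro$ showing $V_k \subset V'$. For the inductive step, suppose $V_k \subset V'$ with $k < p$: then $e_{k, n_k} \in V'$, hence $u(e_{k, n_k}) \in V'$, and condition~(iii) yields $u(e_{k, n_k}) - \lambda_k\, e_{k+1, n_{k+1}} \in V_k \subset V'$; since $\lambda_k \neq 0$, I deduce $e_{k+1, n_{k+1}} \in V'$. The auxiliary observation above then enlarges $V'$ to contain the whole chain $e_{k+1, 0}, \dots, e_{k+1, n_{k+1}}$, and combined with $V_k \subset V'$ this gives $V_{k+1} \subset V'$.

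Once $V_p \subset V'$, condition~(iv) supplies the transition to the bilateral-shift part: applying $u$ to $e_{p, n_p} \in V'$ and using $u(e_{p, n_p}) - \mu f_0 \in V_p \subset V'$ with $\mu \neq 0$, I obtain $f_0 \in V'$. Condition~(ii) together with $u^{\pm 1}$-stability then propagates $f_0$ to every $f_k$, $k \in \Z$. Hence $V'$ contains every basis vector of $V$, so $V' = V$, and Lemma~\ref{basicelementarylemma} yields that $u$ is super-elementary. The proof is a direct variant of that of Proposition~\ref{elementaryprop1}; the only genuinely new ingredient is the single bridge supplied by~(iv) between the finite top $V_p$ of the $e$-structure and the infinite shift part $\Vect(f_k)_{k \in \Z}$, so I do not anticipate any serious technical obstacle.
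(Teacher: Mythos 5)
Your proof is correct and follows essentially the same route as the paper's: take $x=e_{0,n_0}$ (or $x=f_0$ in the void case), show by finite induction that $V_p\subset\Vect(u^k(x))_{k\in\Z}$, use condition (iv) to reach $f_0$, propagate along the $f_k$ via (ii) and the $u^{\pm1}$-stability, and conclude with Lemma \ref{basicelementarylemma}. No issues.
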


\begin{proof}

Again, $V$ is infinite-dimensional.

Assume first that $p \geq 0$. Once more, we set
$x:=e_{0,n_0}$ and we prove that $V$ equals $V':=\Vect(u^k(x))_{k \in \Z}$.
As in the preceding proof, we obtain by finite induction that $V_p \subset V'$. Using condition (iv), we deduce that
$f_0 \in V'$. Since $V'$ is stable under both $u$ and $u^{-1}$, condition
(iii) shows by upward and downward induction that $V'$ contains $f_k$ for all $k \in \Z$.
Hence, $V=V'$.

If the family $(e_{i,j})_{0 \leq i \leq p, \; 0 \leq j \leq n_i}$ is void, we simply take $x:=f_0$, and again we see that
$V=\Vect(u^k(x))_{k \in \Z}$.
\end{proof}

\subsection{Factorizing elementary automorphisms}

We are now ready to prove Theorem \ref{theo2}.
Using Remark \ref{directsumremark}, we see that it suffices to consider the case of super-elementary
automorphisms. Moreover, since any two super-elementary
automorphisms of a vector space $V$ are obviously conjugate to one another in the group $\GL(V)$, it suffices
to prove the following result.

\begin{prop}\label{theo2superelem}
Let $V$ be a vector space with infinite countable dimension, and let $p$ and $q$
be non-derogatory polynomials with degree $2$, with $p$ split.
Then there exist automorphisms $a,b$ of $V$ such that $p(a)=0$, $q(b)=0$, and $ab$
is super-elementary.
\end{prop}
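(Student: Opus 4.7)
The plan is to construct $a$ and $b$ explicitly so that $u := ab$ fits the recognition template of Proposition \ref{elementaryprop1}. Write $p(t) = (t-x)(t-y)$ with $x, y \in \F \setminus \{0\}$, and $q(t) = t^2 - st + r$ with $r \in \F \setminus \{0\}$. I would fix a basis $(e_{i,0}, e_{i,1})_{i \in \N}$ of $V$, group it into $2$-dimensional ``rungs'' $P_i := \Vect(e_{i,0}, e_{i,1})$, and define
\[
a(e_{i,0}) := x\, e_{i,0} + e_{i+1,1}, \quad a(e_{i,1}) := y\, e_{i,1},
\]
\[
b(e_{i,0}) := e_{i,1}, \quad b(e_{i,1}) := s\, e_{i,1} - r\, e_{i,0}.
\]

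The verification would break into four short checks. First, a direct calculation on each basis vector shows $p(a) = 0$: on $e_{i,1}$ this is immediate since $e_{i,1}$ is a $y$-eigenvector, while on $e_{i,0}$ both $a^2(e_{i,0})$ and $(x+y)\, a(e_{i,0}) - xy\, e_{i,0}$ simplify to $x^2 e_{i,0} + (x+y)\, e_{i+1,1}$. Second, $b$ is block-diagonal along the rungs, each block equal to the companion matrix of $q$, giving $q(b) = 0$ by Cayley--Hamilton and $b \in \GL(V)$ since each block has determinant $r \neq 0$. Third, the matrix of $a$ in the ordering $e_{0,0}, e_{0,1}, e_{1,0}, e_{1,1}, \dots$ is lower-triangular with nonzero diagonal $x, y, x, y, \ldots$, so $a \in \GL(V)$. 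Fourth, one computes $u(e_{i,0}) = a(e_{i,1}) = y\, e_{i,1}$, which is collinear with $e_{i,1}$, and $u(e_{i,1}) = -rx\, e_{i,0} + sy\, e_{i,1} - r\, e_{i+1,1}$, so $u(e_{i,1}) \equiv -r\, e_{i+1,1} \pmod{V_i}$ with $-r \neq 0$. Thus the hypotheses of Proposition \ref{elementaryprop1} are satisfied (with $n_i = 1$ for every $i$), and $u = ab$ is super-elementary.

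The main obstacle is to couple consecutive rungs via $a$ while keeping $p(a) = 0$. If both $a$ and $b$ preserved every $P_i$, then so would $ab$, and its orbits would be trapped inside a single $2$-dimensional rung, precluding super-elementarity. The trick is to place the cross-rung correction $e_{i+1,1}$ inside the $y$-eigenspace of $a$, where the quadratic relation $(a-x)(a-y) = 0$ swallows it automatically; $a$ remains annihilated by $p$ even though it is no longer block-diagonal. The product $ab$ then inherits a nonzero transverse component $-r\, e_{i+1,1}$ on each rung's top vector, which is precisely the ``shift up the tower'' that Proposition \ref{elementaryprop1} demands in order to certify that $V^u$ is a cyclic, hence free of rank $1$, $\F[t,t^{-1}]$-module.
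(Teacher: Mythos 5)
Your construction is, up to relabeling ($e_{i,0}=x_{2i}$, $e_{i,1}=x_{2i+1}$), identical to the paper's, and the verification via Proposition \ref{elementaryprop1} (with $n_i=1$ for all $i$) is the same. One small caveat: in infinite dimension a lower-triangular matrix with nonzero diagonal need not define a \emph{surjective} endomorphism (consider $e_j\mapsto e_j+e_{j+1}$), so that argument does not by itself give $a\in\GL(V)$; but invertibility of $a$ follows at once from $p(a)=0$ together with $p(0)=xy\neq 0$, which you have already established.
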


The proof will use a technique that will be reinvested in the next section
(see the proof of Proposition \ref{goodstratprop}).

\begin{proof}
We lose no generality in assuming that $p$ and $q$ are monic. We write $p(t)=(t-\alpha)(t-\beta)$ and
$q(t)=t^2-\lambda t-\mu$.

We choose a basis $(x_n)_{n \in \N}$ of $V$ and we define two endomorphisms $a$ and $b$ of $V$ on that basis as follows:
for all $n \in \N$,
\begin{align*}
a(x_{2n})=\alpha x_{2n}+x_{2n+3}, & \quad a(x_{2n+1})=\beta x_{2n+1}, \\
b(x_{2n})=x_{2n+1}, & \quad b(x_{2n+1})=\mu x_{2n}+\lambda x_{2n+1}.
\end{align*}
For all $n \in \N$, the endomorphism $q(b)$ vanishes at $x_{2n}$ and hence it also vanishes at $x_{2n+1}=b(x_{2n})$ since it commutes with $b$.
Therefore, $q(b)=0$.
On the other hand, we see that
$$\im(a-\alpha \id_V) \subset \Vect(x_{2n+1})_{n \in \N} \subset \Ker(a-\beta \id_V),$$
and hence $p(a)=0$.

Since $p$ and $q$ are non-derogatory, it follows that $a$ and $b$ are automorphisms of $V$, and hence so is $u:=ab$.

Now we use Proposition \ref{elementaryprop1} to see that $u$ is super-elementary.
Set indeed $e_{n,i}:=x_{2n+i}$ for all $n \in \N$ and all $i \in \{0,1\}$, so that $(e_{n,i})_{n \in \N, 0 \leq i \leq 1}$
is a basis of $V$.
For all $n \in \N$, we have
$$u(e_{n,0})=a(x_{2n+1})=\beta x_{2n+1}=\beta e_{n,1},$$
whereas
$$u(e_{n,1})=\mu a(x_{2n})+\lambda a(x_{2n+1})=\mu x_{2n+3}+\mu \alpha x_{2n}+\lambda \beta x_{2n+1},$$
which equals $\mu e_{n+1,1}$ modulo $\Vect(e_{k,i})_{0 \leq k \leq n,i \in \{0,1\}}$.
We deduce from Proposition \ref{elementaryprop1} that $u$ is super-elementary.
\end{proof}

Hence, the proof of Theorem \ref{theo2} is now complete.

Let us close this section by showing that in Theorem \ref{theo2}, we cannot in general allow \emph{both} polynomials $p_1$ and $p_2$ to be irreducible.

\begin{prop}
Let $u$ be a super-elementary automorphism of a real vector space $V$.
Then $u$ is not a $(t^2+1,t^2+1)$-product.
\end{prop}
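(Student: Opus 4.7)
The plan is to suppose, for contradiction, that $u = ab$ with $a, b \in \GL(V)$ satisfying $a^2 = b^2 = -\id_V$, and to derive a contradiction from the real structure together with the super-elementary form of $u$.

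First I will carry out a short algebraic reduction. Since $a^2 = -\id_V$ we have $a^{-1} = -a$, and likewise $b^{-1} = -b$. Hence
\[
aua^{-1} = a^2\, b\, a^{-1} = (-\id_V)\, b\, (-a) = ba = b^{-1} a^{-1} = (ab)^{-1} = u^{-1}.
\]
Thus any such $a$ must satisfy the intertwining relation $au = u^{-1}a$, in addition to $a^2 = -\id_V$.

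Next I will exploit the super-elementary structure of $u$. Fix a basis $(e_k)_{k \in \Z}$ of $V$ with $u(e_k) = e_{k+1}$ for every $k$. The relation $au = u^{-1}a$ forces $a(e_{k+1}) = u^{-1}(a(e_k))$, so an immediate induction gives $a(e_k) = u^{-k}(a(e_0))$ for every $k \in \Z$. In particular, $a$ is entirely determined by the single vector $f_0 := a(e_0)$. Write $f_0 = \sum_k c_k e_k$, where only finitely many of the real scalars $c_k$ are nonzero.

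Finally, I will compute $a^2(e_0)$ in two ways. By linearity and the previous formula,
\[
a^2(e_0) = \sum_k c_k\, a(e_k) = \sum_k c_k\, u^{-k}(f_0) = \sum_{k,j} c_k c_j\, e_{j-k} = \sum_{m \in \Z}\Bigl(\sum_k c_k c_{k+m}\Bigr) e_m.
\]
On the other hand, $a^2 = -\id_V$ gives $a^2(e_0) = -e_0$. Comparing the coefficient of $e_0$ forces $\sum_k c_k^2 = -1$, which is absurd over $\R$. The one potentially delicate step is the first algebraic reduction isolating $aua^{-1} = u^{-1}$; after that the argument collapses to the positivity of sums of real squares, so there is no substantive obstacle.
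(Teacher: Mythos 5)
Your proof is correct, and it takes a genuinely different route from the paper's. The paper invokes its Lemma \ref{commutelemma2} to show that $a$ and $b$ commute with $v:=u+u^{-1}$, hence stabilize $W:=\im v$; it then passes to the $2$-dimensional quotient $V/W$ (spanned by the classes of $e_0$ and $e_1$), where the induced maps would give a $(t^2+1,t^2+1)$-factorization of a cyclic automorphism with characteristic polynomial $t^2+1$ — impossible either by the finite-dimensional classification in \cite{dSPproductquad} or because it would yield a representation of the quaternions on a $2$-dimensional real space. Your argument instead extracts the intertwining relation $au=u^{-1}a$ directly from $a^2=b^2=-\id_V$, uses it to pin down $a$ completely via $a(e_k)=u^{-k}(a(e_0))$, and reads off the coefficient of $e_0$ in $a^2(e_0)=-e_0$ to get $\sum_k c_k^2=-1$, absurd over $\R$. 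This is more elementary and self-contained (no commutation lemma, no quotient construction), and it isolates exactly what is used about $\R$: that $-1$ is not a sum of squares, so the same argument works verbatim over any formally real field. The paper's route, by contrast, ties the statement to its general machinery and to the known finite-dimensional classification. All steps of your computation check out, including the case $a(e_0)=0$, which is equally excluded by the identity $\sum_k c_k^2=-1$.
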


\begin{proof}
Assuming otherwise, there are automorphisms $a,b$ of $V$ such that $u=ab$ and $a^2=b^2=-\id_V$.
By Lemma \ref{commutelemma2}, those automorphisms commute with
$v:=u+u^{-1}$. We deduce that $a$ and $b$ stabilize the subspace $W:=\im v$, and so do $a^{-1}$ and $b^{-1}$.
It follows that $a$, $b$ and $u$ respectively induce automorphisms $a',b',u'$ of the quotient space
$E:=V/W$, such that $(a')^2=-\id_E=(b')^2$, and $u'=a'b'$.

Next, choose a basis $(e_k)_{k \in \Z}$ of $V$ such that $u(e_k)=e_{k+1}$ for all $k \in \Z$,
and note that $V/W$ has dimension $2$ and is spanned by the respective classes
$x$ and $y$ of $e_0$ and $e_1$. We see that $u'(x)=y$ and $u'(y)=-x$, whence
$u'$ is cyclic with characteristic polynomial $t^2+1$.
Besides, we have just seen that $u'$ is a $(t^2+1,t^2+1)$-product.
However, this runs counter to the known classification of $(t^2+1,t^2+1)$-products in the finite-dimensional case
over the field of real numbers (see section 4.4 of \cite{dSPproductquad}). Alternatively, one sees that
$a'b'a'b'=-\id_E$, which reads $a'b'=-b'a'$; combining this with $(a')^2=-\id_E=(b')^2$ would yield a linear representation
of the real algebra of quaternions on the $2$-dimensional real
vector space $E$, and classically there is no such representation.
\end{proof}

\section{Stratifications}\label{stratificationsection}

\subsection{Definition and basic considerations}

Given a non-negative integer $d$, we denote by $\F_d[t]$ the linear subspace of $\F[t]$
spanned by $(t^k)_{0 \leq k \leq d}$. We convene that $\F_{-1}[t]=\{0\}$.

Let $V$ be an $\F[t,t^{-1}]$-module.
First, let $x \in V \setminus \{0\}$ and assume that the $\F[t,t^{-1}]$-submodule generated by $x$ is not free.
Choosing $f(t) \in \F[t,t^{-1}] \setminus \{0\}$ such that $f(t)\cdot x=0$, we can factorize $f(t)=t^k p(t)$
for some $k \in \Z$ and some $p(t) \in \F[t] \setminus \{0\}$, and hence $p(t)\cdot x=0$.
As seen in the proof of Lemma \ref{basicelementarylemma}, this shows that $\F[t]x$ is stable under $x \mapsto t^{-1}x$,
and hence $\F[t,t^{-1}]x=\F[t]x$. It follows further that $\F[t,t^{-1}]x$ has finite dimension over $\F$,
and if we denote this dimension by $d$, the family $(t^k x)_{0 \leq k<d}$ is a basis of the $\F$-vector space
$\F[t,t^{-1}]x$; moreover in that situation $t^d x$ is a linear combination of $x,t x,\dots,t^{d-1}x$
with \emph{nonzero} coefficient on $x$.

In contrast, for all $x \in V \setminus \{0\}$, if the $\F[t,t^{-1}]$-submodule generated by $x$ is free then $(t^kx)_{k \in \Z}$
is a basis of it as a vector space over $\F$.

The following definition echoes a concept that was studied in \cite{dSPidempotentinfinite1,dSPidempotentinfinite2}
for modules over $\F[t]$.

\begin{Def}
Let $V$ be an $\F[t,t^{-1}]$-module.
A \textbf{stratification} of $V$ is an increasing sequence $(V_\alpha)_{\alpha \in D}$, indexed over a well-ordered set $(D,\leq)$,
of submodules of $V$ in which:
\begin{itemize}
\item For all $\alpha \in D$, the quotient module $V_\alpha/\biggl(\underset{\beta<\alpha}{\sum} V_\beta\biggr)$
is nonzero and has a generator.
\item One has $V=\underset{\alpha \in D}{\sum} V_\alpha$.
\end{itemize}
To any such stratification, we assign the \textbf{dimension sequence} $(n_\alpha)_{\alpha \in D}$ defined by $$n_\alpha:=\dim_\F\biggl(V_\alpha/\underset{\beta<\alpha}{\sum} V_\beta\biggr)$$
(in the infinite-dimensional case, we consider this dimension to be $+\infty$, not the first infinite cardinal $\aleph_0$)
and the \textbf{lower bound sequence} $(m_\alpha)_{\alpha \in D}$ defined by
$$m_\alpha:=\begin{cases}
-1 & \text{if $n_\alpha<+\infty$} \\
-\infty & \text{otherwise}.
\end{cases}$$
\end{Def}

Let $(V_\alpha)_{\alpha \in D}$ be a stratification of $V$.
For every $\alpha \in D$, we can choose a vector $x_\alpha \in V_\alpha$ such that $V_\alpha=\F[t,t^{-1}] x_\alpha +\underset{\beta<\alpha}{\sum} V_\beta$,
and we note that:
\begin{itemize}
\item if $n_\alpha$ is finite then
$V_\alpha=\F_{n_\alpha-1}[t]\,x_\alpha \oplus \underset{\beta<\alpha}{\sum} V_\beta$, the family $(t^k x_\alpha)_{0 \leq k<n_\alpha}$
is linearly independent and $t^{n_\alpha} x=\delta x$ modulo $\Vect(t^k \alpha)_{1 \leq k<n_\alpha}+\underset{\beta<\alpha}{\sum} V_\beta$
for some \emph{nonzero} scalar $\delta$ (here we have a direct sum of vector spaces over $\F$);
\item otherwise $(t^k x_\alpha)_{k \in \Z}$ is linearly independent and
$V_\alpha=\F[t,t^{-1}]\,x_\alpha \oplus \underset{\beta<\alpha}{\sum} V_\beta$.
\end{itemize}

We will say that the \textbf{vector sequence} $(x_\alpha)_{\alpha \in D}$ is attached to $(V_\alpha)_{\alpha \in D}$.
If we have such a sequence, a transfinite induction shows that, for all $\alpha$ and $\beta$ in $D$ with $\beta<\alpha$,
the family $(t^k\, x_\delta)_{\beta \leq \delta \leq \alpha, \;  m_\delta<k <n_\delta}$ is linearly independent and
\begin{equation}\label{complet}
\begin{cases}
\biggl(\underset{\gamma<\beta}{\sum} V_\gamma\biggr) \oplus \Vect\bigl((t^k x_\delta)_{\beta \leq \delta \leq \alpha, \; m_\delta < k <n_\delta}\bigr)
& =V_\alpha \\
\biggl(\underset{\gamma<\beta}{\sum} V_\gamma\biggr) \oplus \Vect\bigl((t^k x_\delta)_{\beta \leq \delta < \alpha, \; m_\delta< k <n_\delta}\bigr)
& =\underset{\gamma<\alpha}{\sum} V_{\gamma.}
\end{cases}
\end{equation}
In particular, $(t^k\,x_\alpha)_{\alpha \in D, \; m_\alpha<  k <n_\alpha}$ is a basis of $V$.
As a special case, we get:

\begin{lemma}\label{allinfinity}
Let $V$ be an $\F[t,t^{-1}]$-module and $(V_\alpha)_{\alpha \in D}$ be a stratification of it.
Assume that the corresponding dimension sequence is constant with value $+\infty$.
Then $V$ is free.
\end{lemma}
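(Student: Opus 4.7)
The plan is to extract an attached vector sequence $(x_\alpha)_{\alpha \in D}$ and show it is an $\F[t,t^{-1}]$-basis of $V$. Since every $n_\alpha = +\infty$, we also have $m_\alpha = -\infty$ for every $\alpha$, which (recalling the dichotomy discussed before Lemma~\ref{allinfinity}) means that each layer $V_\alpha/\sum_{\beta<\alpha} V_\beta$ is itself a rank-$1$ free $\F[t,t^{-1}]$-module generated by the class of $x_\alpha$, so the family $(t^k x_\alpha)_{k \in \Z}$ is $\F$-linearly independent modulo $\sum_{\beta<\alpha} V_\beta$.

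First I would invoke the displayed identities \eqref{complet} with $\beta$ equal to the minimum of $D$ and let $\alpha$ range over $D$, which tells us that the family $(t^k x_\alpha)_{\alpha \in D, \; k \in \Z}$ is linearly independent and, combined with $V = \sum_\alpha V_\alpha$, spans $V$ over $\F$. In other words, it is an $\F$-basis of $V$. Generation of $V$ by the $x_\alpha$ as an $\F[t,t^{-1}]$-module is then immediate.

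For $\F[t,t^{-1}]$-linear independence of $(x_\alpha)_{\alpha \in D}$, I would take an arbitrary finite relation
\[
\sum_{i=1}^{n} f_i(t)\cdot x_{\alpha_i} = 0,
\]
with $\alpha_1 < \cdots < \alpha_n$ in $D$ and $f_i(t) = \sum_{k} c_{i,k}\,t^k \in \F[t,t^{-1}]$. Expanding gives $\sum_{i,k} c_{i,k}\,t^k x_{\alpha_i} = 0$, which, by the $\F$-basis property just established, forces every $c_{i,k}$ to vanish, hence each $f_i = 0$. Therefore $(x_\alpha)_{\alpha \in D}$ is an $\F[t,t^{-1}]$-basis of $V$, and $V$ is free.

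There is no real obstacle: the lemma is essentially a direct readout of equation \eqref{complet} once one notices that the hypothesis $n_\alpha = +\infty$ for all $\alpha$ eliminates all torsion phenomena in the successive quotients. The only mild care is to remember that $m_\alpha = -\infty$ in this case, so that the indices $k$ in \eqref{complet} genuinely range over all of $\Z$; without this one would only obtain an $\F$-basis indexed by $m_\alpha < k < n_\alpha$ that is not stable enough under multiplication by $t^{-1}$ to yield freeness.
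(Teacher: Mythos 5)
Your proof is correct and follows exactly the route the paper intends: the lemma is stated there as an immediate special case of the identity \eqref{complet}, which with $n_\alpha=+\infty$ (hence $m_\alpha=-\infty$) for all $\alpha$ yields that $(t^k x_\alpha)_{\alpha\in D,\,k\in\Z}$ is an $\F$-basis of $V$, whence $(x_\alpha)_{\alpha\in D}$ is a free basis of the $\F[t,t^{-1}]$-module $V$. You have merely written out the details that the paper leaves implicit, including the correct observation that infinite dimension of each layer forces it to be free of rank $1$.
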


Conversely, consider a sequence $(x_\alpha)_{\alpha \in D}$, indexed over a well-ordered set $D$,
of vectors of $V$ such that $x_\alpha \not\in \underset{\beta <\alpha}{\sum} \F[t,t^{-1}]\, x_\beta$ for all
$\alpha \in D$, and $V'=\underset{\alpha \in D}{\sum} \F[t,t^{-1}]\, x_\alpha$.
Then one sees that $\Bigl(\underset{\beta \leq \alpha}{\sum} \F[t,t^{-1}]\,x_\beta\Bigr)_{\alpha \in D}$
is a stratification of $V$ with corresponding vector sequence $(x_\alpha)_{\alpha \in D}$.

Of course, any stratification can be re-indexed over an ordinal.

\subsection{Semi-good stratifications}

\begin{Def}
Let $(V_\alpha)_{\alpha \in D}$ be a stratification of the $\F[t,t^{-1}]$-module $V$ with corresponding dimension sequence $(n_\alpha)_{\alpha \in D}$.
We say that it is a \textbf{semi-good} stratification when $D$ has no greatest element and
$n_\alpha >1$ whenever $\alpha \in D$ has a predecessor.
\end{Def}

Here is a key result for proving Theorem \ref{adjacencytheo}:

\begin{prop}\label{goodstratprop}
Let $u$ be an automorphism of an infinite-dimensional vector space $V$. Assume that the module $V^u$
has a semi-good stratification. Let $p \in \F[t]$ be a split non-derogatory polynomial with degree $2$.
Then $u$ is $p$-adjacent to an elementary automorphism.
\end{prop}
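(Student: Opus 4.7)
The plan is to define $a\in\GL(V)$ explicitly on the basis provided by the stratification, and then recognize $v:=au$ as elementary via the criteria of Propositions~\ref{elementaryprop1} and~\ref{elementaryprop2}. Write $p(t)=(t-x)(t-y)$ with $xy\neq 0$, let $(V_\alpha)_{\alpha\in D}$ be the given semi-good stratification with attached vector sequence $(x_\alpha)_{\alpha\in D}$, and recall that $\mathcal{B}=(t^k x_\alpha)_{\alpha\in D,\,m_\alpha<k<n_\alpha}$ is a basis of $V$. Since $D$ has no greatest element, every $\alpha\in D$ admits a successor $\alpha^+\in D$; the semi-good hypothesis ensures that $n_{\alpha^+}\geq 2$.

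\emph{Defining $a$.} Following the template of Proposition~\ref{theo2superelem}, each $x_\alpha$ with $n_\alpha\geq 2$ is declared an $x$-eigenvector of $a$, and each $t^k x_\alpha$ with $1\leq k\leq n_\alpha-2$ a $y$-eigenvector. For each stratum of finite dimension $n_\alpha\geq 2$ we then push the top vector by
\[a(t^{n_\alpha-1} x_\alpha):=y\,t^{n_\alpha-1}x_\alpha+c_\alpha\,x_{\alpha^+}\]
for some scalar $c_\alpha\in\F\setminus\{0\}$ to be chosen. The elements $\alpha\in D$ with $n_\alpha=1$ (possible only when $\alpha$ has no predecessor, by the semi-good hypothesis) are handled by setting $a(x_\alpha):=y\,x_\alpha+c_\alpha\,x_{\alpha^+}$, so that $x_\alpha$ itself plays the role of a top; infinite-dimensional strata are handled by mimicking the alternation pattern of Proposition~\ref{theo2superelem}. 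Since $\alpha^+$ is always a successor in $D$, the vector $x_{\alpha^+}$ lies in $\Ker(a-x)$, whence $(a-y)(t^{n_\alpha-1}x_\alpha)\in\Ker(a-x)$, yielding $p(a)=0$ on every basis vector. The operator $a$ is invertible, being triangular with diagonal in $\{x,y\}\subset\F\setminus\{0\}$.

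\emph{Recognizing $v$ as elementary.} The push term $c_\alpha\,x_{\alpha^+}$ is chosen precisely so that $v=au$ chains successive strata together along the successor-chains of $D$. Partition $D=\bigsqcup_{\ell\in L}C_\ell$, where $L\subseteq D$ consists of the minimum of $D$ together with all its limit ordinals, and $C_\ell:=\{\ell,\ell^+,\ell^{++},\dots\}$; each $C_\ell$ has order type $\omega$. For each $\ell\in L$, one constructs a seed $y_\ell\in V$ so that the cyclic $\F[t,t^{-1}]$-submodule of $V^v$ generated by $y_\ell$ is super-elementary and accounts, up to a correction in $\sum_{\beta<\ell} V_\beta$, for the basis contribution of the strata in $C_\ell$. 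This is done via Proposition~\ref{elementaryprop1} when all strata of $C_\ell$ are finite-dimensional, and using the bilateral-shift pattern of Proposition~\ref{elementaryprop2} when an infinite stratum is present. Combining all chains, $V^v$ becomes the direct sum of the cyclic $\F[t,t^{-1}]$-submodules generated by the $y_\ell$'s, which is a free $\F[t,t^{-1}]$-module; hence $v$ is elementary.

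\emph{Main obstacle.} The delicate step is the transfinite induction at limit ordinals $\ell\in D$. The module relation $t^{n_\ell}x_\ell=\delta_\ell x_\ell+\cdots+w_\ell$ features a nonzero defect $w_\ell\in\sum_{\beta<\ell} V_\beta$, causing the backward $v$-orbit of $x_\ell$ to leak into $\sum_{\beta<\ell} V_\beta$. To realize the decomposition above, the seed $y_\ell$ must be chosen of the form $x_\ell+z_\ell$ with $z_\ell\in\sum_{\beta<\ell} V_\beta$, by transfinite induction on $\ell\in L$ together with the $c_\beta$'s, so that the backward leakage from $y_\ell$ decomposes as a linear combination of orbit members of seeds $y_{\ell'}$ with $\ell'<\ell$. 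Checking that the resulting family $(v^k y_\ell)_{k\in\Z,\,\ell\in L}$ is a basis of $V$, via the freeness relation~\eqref{complet} of the stratification, constitutes the bulk of the technical work.
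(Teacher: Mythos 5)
Your overall strategy (define $a$ on the stratification basis so that $p(a)=0$, then recognize $v:=au$ as elementary via Propositions~\ref{elementaryprop1} and~\ref{elementaryprop2}) is the paper's, but two of your choices create real gaps. First, you place the ``push'' on the \emph{top} vector of each finite stratum, sending $t^{n_\alpha-1}x_\alpha$ to $y\,t^{n_\alpha-1}x_\alpha+c_\alpha x_{\alpha^+}$ with $x_{\alpha^+}$ the \emph{bottom} of the next stratum. Then $v(t^{n_\alpha-2}x_\alpha)=a(t^{n_\alpha-1}x_\alpha)$ acquires the extra term $c_\alpha x_{\alpha^+}$ in the \emph{middle} of a chain, so it is not collinear with $t^{n_\alpha-1}x_\alpha$, and the extra term points into a \emph{higher} stratum rather than being absorbable modulo lower ones; the hypotheses of Propositions~\ref{elementaryprop1} and~\ref{elementaryprop2} therefore fail for the natural basis, and no re-chaining is supplied. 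The paper instead sets $a(x_\alpha)=\mu x_\alpha+u^{n_{\alpha+1}-1}(x_{\alpha+1})$, pushing the \emph{bottom} vector into the \emph{top} of the next stratum: since $v(u^{n_\alpha-1}(x_\alpha))\equiv\delta\,a(x_\alpha)$ modulo lower terms, the cross-stratum term then appears exactly at the wrap-around position and lands on the distinguished vector $e_{i+1,n_{i+1}}$ required by condition (ii) of Proposition~\ref{elementaryprop1}. (Your treatment of infinite-dimensional strata is also unspecified; the paper simply makes all their basis vectors $\lambda$-eigenvectors of $a$, so that $v$ restricts to a bilateral shift there.)

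Second, the step you defer as ``the bulk of the technical work'' --- choosing corrected seeds $y_\ell=x_\ell+z_\ell$ at limit ordinals so that the backward leakage into $\sum_{\beta<\ell}V_\beta$ is controlled, and verifying that $(v^k y_\ell)$ is a global basis --- is precisely the hard part, and it is not carried out. The paper avoids it entirely: it groups the strata into blocks $W_\alpha=\sum_{0\le k<d_\alpha}V_{\alpha+k}$ (starting a new block at each element with no predecessor \emph{and} after each infinite-dimensional stratum --- your partition by limit ordinals alone misses the latter breaks), shows each $W_\alpha$ is $v$-stable, and applies Propositions~\ref{elementaryprop1}/\ref{elementaryprop2} to the induced automorphism of the \emph{quotient} $W_\alpha/\sum_{\beta<\alpha}W_\beta$; all leakage into lower strata is killed by the quotient, and Lemma~\ref{allinfinity} then assembles the blocks into a free module. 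Without either that quotient device or your unexecuted transfinite correction of seeds, the proof is incomplete.
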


\begin{proof}
Without loss of generality, we can assume that $p$ is monic. Then we split $p(t)=(t-\lambda)(t-\mu)$.
We take a semi-good stratification $(V_\alpha)_{\alpha \in \kappa}$ of the $\F[t,t^{-1}]$-module $V^u$, where $\kappa$ is an ordinal,
we denote by $(n_\alpha)_{\alpha \in \kappa}$ the associated dimension sequence,
by $(m_\alpha)_{\alpha \in \kappa}$ the associated lower bound sequence,
and we choose a corresponding vector sequence $(x_\alpha)_{\alpha \in \kappa}$.
First of all, we construct an endomorphism $a$ of $V$ as follows on the basis
$\bigl(u^k(x_\alpha)\bigr)_{\alpha \in \kappa, \; m_\alpha<  k <n_\alpha}$:
\begin{itemize}
\item for all $\alpha \in \kappa$ such that $n_\alpha<+\infty$ ,
we put $a(x_{\alpha}):=\mu x_{\alpha}+u^{n_{\alpha+1}-1} (x_{\alpha+1})$ if $n_{\alpha+1}<+\infty$,
otherwise we put $a(x_{\alpha}):=\mu x_{\alpha}+ x_{\alpha+1}$;
\item for any other basis vector $y$, we put $a(y)=\lambda y$.
\end{itemize}
We claim that $p(a)=0$. First of all, for any basis vector $y$ that is not of the form $x_{\alpha}$ with $n_\alpha<+\infty$,
we have $y \in \Ker(a-\lambda\id_V)$, and hence $p(a)[y]=0$.
Next, let $\alpha \in \kappa$ be such that $n_\alpha<+\infty$.
If $n_{\alpha+1}<+\infty$, then $(a-\mu\id_V)(x_{\alpha}) =u^{n_{\alpha+1}-1} (x_{\alpha+1})$,
a vector which belongs to $\Ker(a-\lambda\id_V)$ because $n_{\alpha+1}>1$ (by the definition of a semi-good stratification).
Otherwise, $(a-\mu\id_V)(x_{\alpha})=x_{\alpha+1}$ belongs to $\Ker(a-\lambda\id_V)$ because $n_{\alpha+1}=+\infty$.
Therefore, $p(a)=0$.

Now, we set $v:=au$, which is an automorphism of $V$. We shall prove that $v$ is elementary.
First of all, let us look at the action of $v$ on the basis $(u^k(x_\alpha))_{\alpha \in \kappa, \; m_\alpha<  k <n_\alpha.}$
\begin{itemize}
\item For all $\alpha \in \kappa$ such that $n_\alpha=+\infty$, and all $k \in \Z$, we have
$v\bigl(u^k(x_\alpha)\bigr)=\lambda\, u^{k+1}(x_\alpha)$.
\item Let $\alpha \in \kappa$ be such that $n_\alpha<+\infty$.
For every integer $k$ such that $0 \leq k< n_\alpha-1$, we have
$$v\bigl(u^k(x_\alpha)\bigr)=\lambda\, u^{k+1}(x_\alpha).$$
Remember also that there is a nonzero scalar $\delta$ such that $u^{n_\alpha}(x_\alpha)=\delta\, x_\alpha$
modulo $\Vect(u^k(x_\alpha))_{1 \leq k< n_{\alpha}}+
\underset{\beta<\alpha}{\bigoplus} V_\beta$.
Hence, $v\bigl(u^{n_\alpha-1}(x_\alpha)\bigr)=\delta\, a(x_\alpha)$ modulo $\Vect(u^k(x_\alpha))_{1 \leq k < n_{\alpha}}+V_\alpha$
(where we use the fact that $a$ maps $V_{\beta}$ into $V_{\beta+1}$ for all $\beta \in \kappa$, and
that, if $\alpha$ has a predecessor, $a$ maps $V_{\alpha-1}$ into $V_{\alpha-1}+\Vect(u^{n_\alpha-1}(x_\alpha))$),
leading to:
$$v\bigl(u^{n_\alpha-1}(x_\alpha)\bigr)=\delta\, u^{n_{\alpha+1}-1}(x_{\alpha+1}) \quad \text{mod.}\; V_\alpha \quad \text{if $n_{\alpha+1}<+\infty$}$$
and
$$v\bigl(u^{n_\alpha-1}(x_\alpha)\bigr)=\delta\, x_{\alpha+1}  \quad \text{mod.}\; V_\alpha \quad \text{if $n_{\alpha+1}=+\infty$.}$$
\end{itemize}

In the rest of the proof, we will combine the above with Propositions \ref{elementaryprop1} and \ref{elementaryprop2} to
establish that the $\F[t,t^{-1}]$-module $V^v$ is free.
We define $D$ as the set of all $\alpha \in \kappa$ such that
either $\alpha$ has no predecessor or $\alpha$ has a predecessor and $n_{\alpha-1}=+\infty$.
Note that $D$ is a non-empty well-ordered set.

Fix $\alpha \in D$. If $n_{\alpha+k}<+\infty$ for all $k \in \N$, we set
$d_\alpha:=+\infty$; other we denote by $d_\alpha$ the least positive integer $k$ such that $n_{\alpha+k-1}=+\infty$
(the definition is correct because $\kappa$ has no greatest element).
In any case, we set
$$W_\alpha:=\sum_{0 \leq k<d_\alpha} V_{\alpha+k}=\underset{0 \leq k<d_\alpha}{\bigcup} V_{\alpha+k.}$$
First of all, we see that, for all $\alpha \in \kappa$, the automorphism
$a$ maps $V_{\alpha}$ into $V_{\alpha+1}$ if $n_\alpha<+\infty$, otherwise it maps $V_\alpha$ into itself.
Hence, for all $\alpha \in D$, we find that $a$ stabilizes $W_\alpha$.
Since $a$ is quadratic we deduce that $a^{-1}$ also stabilizes $W_\alpha$, and hence
$v^{-1}=u^{-1}a^{-1}$ stabilizes $W_\alpha$.

Now, $V$ is endowed with the $\F[t,t^{-1}]$-module structure associated with $v$ (not $u$!).
We have just proved that $(W_\alpha)_{\alpha \in D}$ is a family of submodules of $V^v$.
In order to conclude, we shall prove that it is a stratification of $V^v$ in which the dimension sequence
is constant with value $+\infty$. Lemma \ref{allinfinity} will then ensure that $v$ is elementary.

To help us, we introduce some additional notation. Let $\beta \in \kappa$.
If $\beta \not\in D$ then $\beta$ has a predecessor $\beta-1$.
As there is no infinite decreasing sequence in $\kappa$, there is a uniquely-defined element $g(\beta)\in D$ together with a
positive integer $m$ such that
$g(\beta)+m=\beta$ and $g(\beta)+k \not\in D$ for all $k \in \lcro 1,m\rcro$.
It follows from the above definition
that
$$V_\beta \subset W_{g(\beta)}.$$

Next, we see that $(W_\alpha)_{\alpha \in D}$ is increasing.
Let indeed $(\alpha,\beta)\in D^2$ be such that $\alpha<\beta$.
From the definition of $d_\alpha$, it follows that $\alpha+k<\beta$ for every integer $k$
such that $0 \leq k<d_\alpha$, and hence $W_\alpha \subset \underset{\gamma \in \kappa, \gamma<\beta}{\sum} V_\gamma
\subsetneq V_\beta \subset W_\beta$.

Finally, let us fix $\alpha \in D$. We consider the quotient module
$$E:=W_\alpha/\underset{\beta \in D, \; \beta<\alpha}{\sum} W_\beta$$
and we prove that it is nonzero and free with one generator.
To start with, we check that:
\begin{equation}
\label{eq1}
\underset{\beta \in D, \; \beta<\alpha}{\sum} W_\beta=\underset{\beta \in \kappa, \; \beta<\alpha}{\sum} V_\beta.
\end{equation}
Let indeed $\beta \in \kappa$ be such that $\beta<\alpha$.
Then $V_\beta \subset W_{g(\beta)}$ with $g(\beta)<\alpha$ and $g(\beta)\in D$.
Conversely, let $\beta \in D$ be such that $\beta<\alpha$.
Then $\beta+k<\alpha$ for all $k$ such that $0 \leq k<d_\beta$, and hence the definition of $W_\beta$ yields the inclusion
$W_\beta \subset \underset{\gamma \in \kappa, \; \gamma<\alpha}{\sum} V_\gamma$.

Combining equality \eqref{eq1} with the definition of $W_\alpha$, we deduce that
$$E=\Biggl(\sum_{0 \leq k<d_\alpha} V_{\alpha+k}\Biggr)/\Biggl(\underset{\beta \in \kappa, \; \beta<\alpha}{\sum} V_\beta\Biggr).$$
For any vector $x \in W_\alpha$, denote by $\overline{x}$ its class in the quotient vector space $E$.
Hence, there are two situations for the automorphism $\overline{v}$ of $E$ induced by $v$:
\begin{itemize}
\item If $d_\alpha=+\infty$, then $\bigl(\overline{u^l(x_{\alpha+k})}\bigr)_{k \in \N, \; 0 \leq l<n_{\alpha+k}}$
is a basis of the $\F$-vector space $E$, and $\overline{v}$ satisfies the assumptions of Proposition \ref{elementaryprop1}
with respect to that basis.
\item If $d_\alpha<+\infty$, then $\bigl(\overline{u^l(x_{\alpha+k})}\bigr)_{0 \leq k < d_{\alpha}, \; m_{\alpha+k}<l<n_{\alpha+k}}$
is a basis of the $\F$-vector space $E$, and $\overline{v}$ satisfies the assumptions of Proposition \ref{elementaryprop2}
with respect to that basis.
\end{itemize}
In particular, in any case $E$ is infinite-dimensional as a vector space over $\F$, and
it follows from one of Propositions \ref{elementaryprop1} and \ref{elementaryprop2}
that the $\F[t,t^{-1}]$-module $E$ is free, nonzero and has a generator.

We conclude that $(W_\alpha)_{\alpha \in D}$ is a stratification of $V^v$ and that its dimension sequence
is constant with value $+\infty$. By Lemma \ref{allinfinity}, we conclude that $v$ is elementary.
\end{proof}

\section{Automorphisms with no dominant eigenvalue: The uncountable-dimensional case}\label{uncountablesection}

Here, we consider the case of a vector space with uncountable dimension.
In order to prove Theorem \ref{adjacencytheo} in that special setting,
we know from Corollary \ref{goodstratprop} that it suffices to prove
the following result:

\begin{prop}
Let $V$ be a vector space with uncountable dimension, and $u$ be an automorphism of $V$
with no dominant eigenvalue. Then $V^u$ has a semi-good stratification.
\end{prop}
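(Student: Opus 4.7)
The plan is a transfinite recursion over the ordinal $\kappa := \dim V$. Since $\kappa$ is an uncountable cardinal it is a limit ordinal, so the indexing set $\{\alpha < \kappa\}$ already has no greatest element, which takes care of one half of the semi-good definition. I fix a basis $(v_i)_{i<\kappa}$ of $V$, and since the set $S := \{\alpha < \kappa \mid \alpha = 0 \text{ or } \alpha \text{ is a limit}\}$ has cardinality $\kappa$, I also fix a bijection $\theta : S \to \kappa$ that will serve for bookkeeping.

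Suppose $(V_\beta)_{\beta < \alpha}$ has been constructed. Set $W_\alpha := \sum_{\beta < \alpha} V_\beta$. Each prior quotient $V_\beta/W_\beta$ is cyclic over $\F[t,t^{-1}]$, hence has $\F$-dimension at most $\aleph_0$, so a telescoping bound yields $\dim_\F W_\alpha \leq \max(|\alpha|, \aleph_0) < \kappa$ (here the uncountability of $\kappa$ is used, via $|\alpha|<\kappa$ and $\aleph_0<\kappa$). In particular $W_\alpha \subsetneq V$, and the induced automorphism $u'$ on $V' := V/W_\alpha$ still has no dominant eigenvalue: the isomorphism $(u-\lambda)(V)/((u-\lambda)(V) \cap W_\alpha) \cong \im(u'-\lambda)$, together with $\dim (u-\lambda)(V) = \kappa$ and $\dim W_\alpha < \kappa$, gives $\rk(u'-\lambda) = \kappa = \dim V'$ for every $\lambda \in \F$.

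I then choose $x_\alpha$. If $\alpha$ is a successor, I pick $x_\alpha \in V$ whose class $\bar{x}_\alpha$ in $V'$ is not an eigenvector of $u'$. Such a vector exists: otherwise every nonzero vector of $V'$ would be an eigenvector, and comparing the eigenvalues of two linearly independent such vectors and of their sum forces a common eigenvalue, hence $u' = \mu \id_{V'}$ for some scalar $\mu$; but then $u - \mu \id_V$ would map $V$ into $W_\alpha$, contradicting $\rk(u-\mu) = \kappa > \dim W_\alpha$. The vectors $\bar{x}_\alpha$ and $u' \bar{x}_\alpha$ are then $\F$-linearly independent in $V'$, ensuring $n_\alpha \geq 2$. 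If instead $\alpha \in S$, I set $x_\alpha := v_{\theta(\alpha)}$ whenever that basis vector is not yet in $W_\alpha$, and otherwise let $x_\alpha$ be any element of $V \setminus W_\alpha$ (which exists since $W_\alpha \subsetneq V$). In both cases I define $V_\alpha := \F[t, t^{-1}]\, x_\alpha + W_\alpha$, a proper cyclic extension of $W_\alpha$.

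The resulting family $(V_\alpha)_{\alpha < \kappa}$ is increasing with each $V_\alpha/W_\alpha$ nonzero and cyclic, hence is a stratification. It is semi-good because $\kappa$ has no greatest element and $n_\alpha \geq 2$ at every successor. The sum exhausts $V$: for any $i < \kappa$, writing $i = \theta(\alpha)$ with $\alpha \in S$, the step at index $\alpha$ either inserts $v_i$ into $V_\alpha$ directly, or the fact that $v_i$ already lies in $W_\alpha$ means it is covered at an earlier stage. The main obstacle in this plan is precisely the production of the non-eigenvector required at each successor step; this is the one place where the uncountability of $\kappa$ is genuinely used, through the bound $\dim W_\alpha < \kappa$, which prevents $u$ from becoming a scalar modulo $W_\alpha$ and so makes the no-dominant-eigenvalue hypothesis effective.
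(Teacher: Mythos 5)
Your proposal is correct and follows essentially the same route as the paper's proof: a transfinite recursion of length $\kappa=\dim V$ in which limit steps (and $0$) absorb the members of a fixed basis to guarantee exhaustion, while successor steps use $\dim W_\alpha<\kappa$ together with the absence of a dominant eigenvalue to show that $u$ is not scalar modulo $W_\alpha$ and hence admits a vector $x_\alpha$ with $(x_\alpha,u(x_\alpha))$ independent modulo $W_\alpha$, giving $n_\alpha\geq 2$. The only differences are cosmetic bookkeeping (your bijection $\theta$ versus the paper's direct indexing of the basis by the limit ordinals).
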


The proof is an easy adaptation of the one of proposition 21 from \cite{dSPidempotentinfinite1}. We include it only for the
sake of completeness.

\begin{proof}
The dimension of $V$ is an uncountable \emph{cardinal} which we denote by $\kappa$
(remember that a cardinal is an ordinal that is in bijection with no smaller ordinal).
Denote by $L$ the subset consisting of the elements of the ordinal $\kappa$ that have no predecessor.
Because $\kappa$ is uncountable, $L$ has cardinality $\kappa$, and this enables us to choose a basis $(e_\alpha)_{\alpha \in L}$ of the $\F$-vector space $V$.

The construction of a semi-good stratification is performed by transfinite induction.

Let $\alpha \in \kappa$, and assume that we have constructed a family
$(V_\beta)_{\beta< \alpha}$ of submodules of $V^u$
which is a stratification of the submodule
$W:=\underset{\beta <\alpha}{\sum} V_\beta$.
Denoting the corresponding dimension sequence by
$(n_\beta)_{\beta<\alpha}$, we also assume that, for every $\beta<\alpha$, if $\beta\not\in L$ then $n_\beta \geq 2$, otherwise $e_\beta \in V_\beta$.
Note that the set $\{\beta \in \kappa: \beta<\alpha\}$ has its cardinality less than $\kappa$;
since each quotient $V_\beta/\underset{\gamma <\beta}{\sum} V_\gamma$, for $\beta<\alpha$, has countable (possibly finite) dimension
as a vector space over $\F$, we get by transfinite induction that
$\dim V_\beta<\kappa$ for all $\beta \in \kappa$, and hence $\dim W<\kappa$.

Denote by $\overline{u}$ the automorphism of $V/W$ induced by $u$. This automorphism cannot equal $\lambda \id_{V/W}$ for some $\lambda \in \F$,
otherwise $\im(u-\lambda \id_V) \subset W$ and $\lambda$ would be a dominant eigenvalue of $u$.
Hence, the classical characterization of the scalar multiples of the identity among the endomorphisms
yields a vector $y \in V$ such that $(y,u(y))$ is linearly independent modulo $W$.

Now, we put $V_\alpha:=\F[t,t^{-1}]e_\alpha+W$ if $\alpha \in L$ and $e_\alpha \not\in W$, otherwise
we put $V_\alpha:=\F[t,t^{-1}]y+W$. In any case, the quotient module $V_\alpha/W$ has a generator and it is non-zero.
Moreover, in the second case $\dim_\F (V_\alpha/W) \geq 2$. Finally, we note that $V_\alpha$
contains $e_\alpha$ whenever $\alpha \in L$. Hence, the inductive step is climbed.

We have constructed an increasing sequence $(V_\alpha)_{\alpha \in \kappa}$ of submodules of $V^u$
that is a semi-good stratification of the sum $W:=\underset{\alpha \in \kappa}{\sum} V_\alpha$
and such that $V_\alpha$ contains $e_\alpha$ for all $\alpha \in L$.
The last property shows that $W=V$, which completes the proof.
\end{proof}

To further illustrate the specificity of the uncountable-dimensional case, we
give an example where $V^u$ has no stratification indexed over an infinite set whereas $u$ has no dominant eigenvalue.
In particular, $V^u$ has no semi-good stratification.

\begin{ex}\label{exampleofnogoodstrat}
Let $d$ be a non-negative integer.
Set $R:=\F[t,t^{-1}]$ and $L:=\F[t,t^{-1}]/(t-1)$, and consider the
$\F[t,t^{-1}]$-module $V:=R \times L^d$.

Assume that $V$ has a semi-good stratification $(V_\alpha)_{\alpha \in D}$
indexed over an infinite ordinal $D$. In particular, $D$ contains all non-negative integers.
Moreover, $V_d$ has dimension at least $d+1$ as a vector space over $\F$, and hence it is not included in
$\{0\} \times L^d$. In other words, $V_d$ contains an element of the form
$(a,b)$ where $a \in R \setminus \{0\}$ and $b \in L^d$.
Write $a=\underset{k=n}{\overset{m}{\sum}} a_k\, t^k$ for integers $n \leq m$ and scalars $a_n,\dots,a_m$ such that $a_na_m \neq 0$.
Set $N:=m-n$.
Then one deduces that every element of $V$ is equal to an element of
$\F_{N-1}[t] \times L^d$ modulo $V_d$. It follows that $V/V_d$ has finite dimension (at most $N+d$) over $\F$.
This contradicts the fact that $\dim_\F(V_{d+k}/V_d)\geq k$ for every integer $k \geq 0$.

Now, we consider the automorphism $u : x \mapsto tx$ of the $\F$-vector space $V$, and we prove that
it has no dominant eigenvalue. Assume on the contrary that it has a dominant eigenvalue $\lambda$.
Since $V$ has countable dimension as a vector space over $\F$, the endomorphism $u-\lambda\,\id_V$ must have finite rank.
This contradicts the obvious fact that the pairs $((t-\lambda)t^k,0)$ in $R \times L^d$, for $k \in \Z$, are linearly independent over $\F$.
Hence, $u$ has no dominant eigenvalue, yet $V^u=R \times L^d$ has no semi-good stratification.
\end{ex}

\section{Automorphisms with no dominant eigenvalue: The countable-dimensional case}\label{countablesection}

In this section, we complete the proof of Theorem \ref{adjacencytheo} by tackling the special
case of vector spaces with infinite countable dimension.

We have already illustrated through Example \ref{exampleofnogoodstrat} that the situation is more complicated than the
one of vector spaces with uncountable dimension.
We split the discussion into two cases, whether $V^u$ is a torsion module or not.

\subsection{When $V^u$ is a torsion module}

Let $u$ be an automorphism of a vector space $V$ with infinite countable dimension.
Assume that the resulting $\F[t,t^{-1}]$-module $V^u$ is a torsion module.
As seen in the beginning of Section \ref{stratificationsection}, this implies that the corresponding
$\F[t]$-module is a torsion module. Moreover, the
$\F[t]$-submodules of $V^u$ are exactly the $\F[t,t^{-1}]$-submodules of $V^u$.
Using proposition 22 of \cite{dSPidempotentinfinite2}, we deduce:

\begin{prop}\label{torsiongoodstrat}
Let $u$ be an automorphism of a vector space $V$ with infinite countable dimension.
Assume that $u$ has no dominant eigenvalue and that $V^u$ is a torsion $\F[t,t^{-1}]$-module.
Then $V^u$ has a semi-good stratification.
\end{prop}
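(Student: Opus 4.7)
The plan is to reduce the question to a statement about $\F[t]$-modules that has already been established in \cite{dSPidempotentinfinite2}, by showing that in the torsion case the $\F[t]$-module structure on $V$ induced by $u$ carries exactly the same information (submodules, stratifications) as the $\F[t,t^{-1}]$-module structure.

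First, I would record the two observations that are flagged in the paragraph preceding the statement. The first observation is that $V^u$ is also torsion as an $\F[t]$-module: if $x \in V$ is killed by some nonzero Laurent polynomial $f(t) = t^k p(t)$ with $p \in \F[t] \setminus \{0\}$, then $p(t)\cdot x = 0$ because $u$ is invertible. The second observation is that an $\F$-subspace $W \subset V$ is $\F[t,t^{-1}]$-stable if and only if it is $\F[t]$-stable: indeed, the discussion at the beginning of Section \ref{stratificationsection} shows that for any $x$ in a torsion $\F[t]$-module, $u^{-1}(x)$ lies in $\F[t]\cdot x$, so stability under $u$ automatically implies stability under $u^{-1}$.

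The second step is to transfer the notion of semi-good stratification. Since the two classes of submodules coincide, any increasing well-ordered family $(V_\alpha)_{\alpha \in D}$ of $\F[t]$-submodules covering $V$ is at the same time a chain of $\F[t,t^{-1}]$-submodules covering $V$, and the quotients $V_\alpha / \sum_{\beta<\alpha} V_\beta$ are the same as $\F$-vector spaces, so the dimension sequence $(n_\alpha)_{\alpha \in D}$ is the same. A generator of the quotient as $\F[t]$-module is also a generator as $\F[t,t^{-1}]$-module (the module is torsion so the two cyclic submodules generated by a vector coincide). Hence the defining conditions of a semi-good stratification are preserved verbatim.

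Finally, I would invoke proposition~22 of \cite{dSPidempotentinfinite2}: in the countable-dimensional torsion setting with no dominant eigenvalue, that result furnishes a semi-good stratification of $V$ viewed as an $\F[t]$-module (the notion of dominant eigenvalue being purely $\F$-linear, the hypothesis transfers unchanged). By the identification above, this is also a semi-good stratification of $V^u$, which is what is required. The main subtlety is really only the verification in steps one and two that the two module structures yield the same submodule lattice and the same cyclic quotients in the torsion case; once this is settled, the proposition is a direct citation.
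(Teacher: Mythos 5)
Your proposal is correct and follows exactly the paper's route: observe that in the torsion case the $\F[t]$-submodules of $V^u$ coincide with the $\F[t,t^{-1}]$-submodules (so stratifications and their dimension sequences transfer verbatim), then cite proposition~22 of \cite{dSPidempotentinfinite2}. The extra verification you supply in steps one and two is the content the paper leaves implicit, and it is accurate.
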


Hence, by Proposition \ref{goodstratprop}, we conclude that
Theorem \ref{adjacencytheo} holds whenever $V^u$ is a torsion module over $\F[t,t^{-1}]$
and $V$ has infinite countable dimension.

\subsection{When $V^u$ is not a torsion module}

Here, the strategy of using a semi-good stratification fails, and we must resort to a slightly different method.

Let $V$ be a vector space with infinite countable dimension, and let $u \in \GL(V)$.
Assume that $V^u$ is not a torsion module over $\F[t,t^{-1}]$.

In particular, there is a non-zero free submodule of $V^u$.
Using Zorn's lemma, we find a maximal $\F[t,t^{-1}]$-independent nonempty subset $A$ of $V$.
We choose $c \in A$, and we consider the free submodules
$$W:=\underset{b \in A}{\bigoplus} \,\F[t,t^{-1}] b \quad \text{and} \quad
W':=\underset{b \in A \setminus \{c\}}{\bigoplus} \F[t,t^{-1}] b.$$
The quotient module $V/W$ is a torsion one, otherwise picking
an element outside of its torsion and writing it as the class of an element $y$ of $V$, we would get that
$A \cup \{y\}$ is independent over $\F[t,t^{-1}]$, contradicting the maximality of $A$.

If $W=V$ then we already know that $u$ is elementary: then, we choose a root $\lambda$ of $p$, we put $a:=\lambda \id_V$
and we see from Lemma \ref{basicelementarylemma} that $au$ is elementary.

\vskip 3mm
In the rest of the proof, we assume that $W \subsetneq V$. Note
then that $V/W$ has countable (possibly finite) dimension $d>0$ as a vector space over $\F$.

For all $k \in \Z$, we consider the linear subspace
$$W_k:=W' \oplus \Vect\bigl(u^l(c)\bigr)_{-\infty<l\leq k,}$$
and we note that $W=\underset{k \in \Z}{\bigcup} W_k$.

\begin{claim}
There exists a stratification $(E_k)_{k \in D}$ of $V/W$ in which the indexing ordered set $D$ is either $\N$ or a finite ordinal.
\end{claim}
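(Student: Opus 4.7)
The plan is to build the stratification by a greedy exhaustion of $V/W$ by cyclic submodules, leveraging the countable $\F$-dimension of $V/W$. Since $V$ has infinite countable dimension over $\F$, so does the quotient vector space $V/W$; hence I can fix an enumeration $(\bar y_i)_{i \in I}$ of an $\F$-basis of $V/W$, where $I$ is either a finite ordinal or $\N$. The fact that $V/W$ is a torsion $\F[t,t^{-1}]$-module (already noted) will not be needed for the claim itself, but guarantees that each cyclic submodule $\F[t,t^{-1}]\cdot \bar y_{n_k}$ we introduce is finite-dimensional over $\F$, which is consistent with stratifying a countable-dimensional module by cyclic pieces.

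The construction would be a recursion on $k \in \N$. At stage $k$, assume that $E_0,\dots,E_{k-1}$ have been defined. If $\sum_{j<k} E_j=V/W$, stop and set $D:=\{0,1,\dots,k-1\}$. Otherwise, let $n_k$ be the least index $i \in I$ such that $\bar y_i \notin \sum_{j<k} E_j$, and put
$$E_k:=\sum_{j<k} E_j+\F[t,t^{-1}]\cdot \bar y_{n_k}.$$
If the procedure never halts, take $D:=\N$. By construction $(n_k)_{k \in D}$ is strictly increasing in $I$, so the process is well defined at every stage it reaches.

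To conclude, I would check the two clauses of the definition of a stratification. The quotient $E_k/\sum_{j<k} E_j$ is generated by the class of $\bar y_{n_k}$ (so it has a generator) and is nonzero by the choice of $n_k$. For the total sum: in the finite case $V/W=\sum_{k \in D} E_k$ holds by the termination criterion; in the case $D=\N$, fix $i \in I$ and observe that, were $\bar y_i \notin \sum_{j<k} E_j$ for every $k$, the definition of $n_k$ would force $n_k \leq i$ for all $k$, contradicting the strict monotonicity of $(n_k)_{k \in \N}$. Hence every $\bar y_i$ eventually lies in some $E_k$, and as the $\bar y_i$'s span $V/W$ over $\F$ we get $V/W=\sum_{k \in D} E_k$.

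There is no real obstacle in this claim: it is a purely combinatorial consequence of countability together with the trivial observation that any $\F[t,t^{-1}]\cdot \bar y$ is a submodule with a generator. The claim is only the first preparatory step; the genuine difficulty will presumably lie in the subsequent use of the stratification $(E_k)_{k \in D}$ of $V/W$, together with the already-available bilateral-shift structure on $W'$ and the cyclic factor $\F[t,t^{-1}]c$, to produce a single quadratic automorphism $a$ with $p(a)=0$ such that $au$ is elementary.
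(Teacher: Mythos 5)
Your proof is correct and follows essentially the same greedy construction as the paper's: at each stage adjoin the cyclic submodule generated by the first basis vector of $V/W$ not yet captured, and either terminate with a finite ordinal or continue to get $D=\N$, with exhaustion guaranteed by the strict monotonicity of the chosen indices. The only slip is the opening assertion that $V/W$ inherits infinite countable dimension from $V$ — it may well be finite-dimensional (the paper only records that its dimension $d$ is countable and positive) — but since you allow the index set $I$ of your enumeration to be a finite ordinal, your construction covers that case anyway.
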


From that point on, we assimilate $\N$ with the first infinite ordinal, and any finite ordinal with cardinality $n$ with the
interval $\lcro 0,n-1\rcro$ of integers.

\begin{proof}
Let us choose a basis $(e_k)_{0 \leq k<d}$ of $V/W$.

By induction, we construct a sequence $(E_k)_{k \in \N}$ of finite-dimensional submodules of $V/W$ such that $e_k \in E_k$ for all $k \in \N$
such that $k<d$, as follows.

We define $E_0$ as the submodule generated by $e_0$.
Given $k \in \N \setminus \{0\}$ and a submodule $E_{k-1}$ of $V/W$, we define $E_k$ as $E_{k-1}$ if $E_{k-1}=V/W$, otherwise
we take the least non-negative integer $j<d$ such that $e_j \not\in E_{k-1}$, and we set $E_k:=E_{k-1}+\F[t,t^{-1}]e_j$.
By induction, one shows that $e_k \in E_k$ for every non-negative integer $k<d$, and hence $V/W=\underset{k \in \N}{\bigcup} E_k$.
Note that $E_{k+1}/E_k$ has a generator for all $k \in \N$, and if it is the zero module then $E_k=V/W$. It follows that:
\begin{itemize}
\item Either the sequence $(E_k)_{k \in \N}$ is increasing, and we put $D:=\N$.
\item Or there is an integer $m\geq 0$ such that $(E_0,\dots,E_m)$ is increasing and $E_m=V/W$,
in which case we set $D:=\lcro 0,m\rcro$.
\end{itemize}
In any case, we have obtained the requested stratification of $V/W$.
\end{proof}

Let us take such a stratification $(E_k)_{k \in D}$, with its associated dimension sequence denoted by $(n_k)_{k \in D}$,
and a choice of associated vector sequence $(y_k)_{k \in D}$.
Since $V/W$ is a torsion module, each $n_k$ is finite. The vectors $y_k$ are classes of vectors of $V$ modulo $W$, and
in the next step we make a convenient choice of representatives:

\begin{claim}
There exists a family $(x_k)_{k \in D} \in V^D$ such that, for all $k \in D$,
the class of $x_k$ modulo $W$ equals $y_k$, and
$u^{n_k}(x_k)$ belongs to $W_0+\Vect(u^l(x_i))_{0 \leq i \leq k, \; 0 \leq l<n_i.}$
\end{claim}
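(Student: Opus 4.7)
The plan is to build $(x_k)_{k\in D}$ by induction on $k\in D$, correcting an arbitrary lift of $y_k$ by an element of the free submodule $\F[t,t^{-1}]\cdot c\subset W$. Suppose $x_0,\dots,x_{k-1}$ have been constructed and let $\widetilde{x}_k\in V$ be any lift of $y_k$. Since $(E_i)_{i\in D}$ is a stratification of $V/W$ with finite dimension sequence, there exist scalars $\alpha_{i,l}$ and $\beta_l$, with $\beta_0$ nonzero (the scalar $\delta$ from the stratification setup), such that
\[ u^{n_k}(y_k)=\sum_{i<k,\,0\le l<n_i}\alpha_{i,l}\,u^l(y_i)+\sum_{0\le l<n_k}\beta_l\,u^l(y_k)\quad\text{in }V/W. \]
Lifting this identity to $V$, the difference
\[ w_k:=u^{n_k}(\widetilde{x}_k)-\sum_{i<k,\,l}\alpha_{i,l}\,u^l(x_i)-\sum_{l<n_k}\beta_l\,u^l(\widetilde{x}_k) \]
lies in $W$, and replacing $\widetilde{x}_k$ by $\widetilde{x}_k+z$ (with $z\in W$) changes it into $w_k+p(u)(z)$, where $p(t):=t^{n_k}-\sum_{l=0}^{n_k-1}\beta_l\,t^l$ has degree $n_k\ge 1$ and nonzero constant term $-\beta_0$. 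It is enough to find $z\in W$ with $w_k+p(u)(z)\in W_0$.

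Using the splitting $W=W'\oplus\F[t,t^{-1}]\cdot c$, which is preserved by $p(u)$, write $w_k=w'_k+q(t)\cdot c$ with $w'_k\in W'\subset W_0$. Choosing $z=r(t)\cdot c$ reduces the task to finding $r(t)\in\F[t,t^{-1}]$ such that $q(t)+p(t)r(t)\in\F[t^{-1}]$; equivalently, the class of $q(t)$ in $\F[t,t^{-1}]/(p(t))$ should be representable by an element of $\F[t^{-1}]$. Here the key structural fact is that $p$ has nonzero constant term and is therefore coprime to $t$ in $\F[t,t^{-1}]$.

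The main obstacle is to show that the inclusion $\F[t^{-1}]\hookrightarrow\F[t,t^{-1}]$ induces a \emph{surjection} $\F[t^{-1}]\twoheadrightarrow\F[t,t^{-1}]/(p(t))$. I would prove this by induction on $m\ge 0$, showing that each class $[t^m]$ lies in the image: the relation $p(t)\,t^{m-n_k}\equiv 0\pmod{p(t)}$ combined with the monicity of $p$ expresses $t^m$ modulo $p(t)$ as an $\F$-linear combination of $t^{m-n_k},\dots,t^{m-1}$, each exponent being strictly less than $m$, so the induction closes (the base $m=0$ is trivial). Granting this, select $r(t)$ as above and set $x_k:=\widetilde{x}_k+r(t)\cdot c$. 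Then $x_k$ still lifts $y_k$, and by construction
\[ u^{n_k}(x_k)-\sum_{i<k,\,l}\alpha_{i,l}\,u^l(x_i)-\sum_{l<n_k}\beta_l\,u^l(x_k)=w_k+p(u)(r(t)c)\in W_0, \]
which gives $u^{n_k}(x_k)\in W_0+\Vect\bigl(u^l(x_i)\bigr)_{0\le i\le k,\,0\le l<n_i}$ as required.
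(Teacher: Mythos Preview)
Your argument is correct. Both your proof and the paper's proceed by induction on $k$ and adjust an arbitrary lift of $y_k$ by an element of $\F[t,t^{-1}]\cdot c$ so as to force the relation $u^{n_k}(x_k)\in W_0+\Vect(u^l(x_i))_{i\le k,\,l<n_i}$. The difference is purely in packaging. The paper argues iteratively: starting from $u^{n_k}(z)\in W_p+H+G$ for some $p\in\Z$, it writes the $W_p$-component as $u^{n_k}(z_1')$ with $z_1'\in W_{p-n_k}$, replaces $z$ by $z-z_1'$, and observes that the new error lies in $W_{p-1}+H+G'$; repeating this finitely many times brings $p$ down to $0$. You instead fix the coefficients $\alpha_{i,l},\beta_l$ once, encode the problem as finding $r(t)\in\F[t,t^{-1}]$ with $q(t)+p(t)r(t)\in\F[t^{-1}]$, and solve it in one stroke via the surjectivity of $\F[t^{-1}]\to\F[t,t^{-1}]/(p(t))$. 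Your surjectivity proof (reducing $[t^m]$ to lower powers using the monicity of $p$) is precisely the abstract form of the paper's step-by-step descent. Your formulation is a bit more conceptual and makes explicit why the correction lives entirely in the summand $\F[t,t^{-1}]\cdot c$, whereas the paper's version avoids introducing the auxiliary polynomial $p(t)$ and the quotient ring. Note, incidentally, that the nonvanishing of $\beta_0$ is not actually needed for this claim (neither proof uses it); it becomes relevant only later in the section.
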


\begin{proof}
We construct such a family by induction on $k$.

Assume that the family has been constructed up to some $k\in D$ (excluding $k$).
Set $H:=\Vect(u^l(x_i))_{0 \leq i < k, \; 0 \leq l<n_i}$.

We start with an arbitrary representative $z$ of $y_k$ in $V$, and we set
$G:=\Vect(u^l(z))_{0 \leq l<n_k}$.
By assumption, we have $u^{n_k}(z) \in W_p+H+G$ for some $p \in \Z$.

The key is to prove that $z$ can be replaced with another representative of $y_k$ so that $p$ is replaced with $p-1$.
To see this, split first $u^{n_k}(z)=z_1+z_2$ for some $z_1 \in W_p$ and some $z_2 \in H+G$.
The definition of $W_p$ shows that $z_1=u^{n_k}(z'_1)$ for some $z'_1 \in W_{p-n_k}$.
Now, we set $z':=z-z'_1$, so that $u^{n_k}(z')=z_2$ and the class of $z'$ modulo $W$ is still $y_k$.
Putting $G':=\Vect(u^l(z'))_{0 \leq l <n_k}$, we see that
$G \subset   W_{p-1}+G'$, and hence $u^{n_k}(z') \in W_{p-1}+H+G'$.

Continuing by downward induction on $p$, we conclude that $z$ can be chosen so that $u^{n_k}(z) \in W_0+H+\Vect(u^l(z))_{0 \leq l<n_k}$,
which is exactly the requested result.
\end{proof}

Now, we take a family $(x_k)_{k \in D}$ of vectors that satisfies the condition of the previous claim.
For $k \in \N \cup \{-1\}$, we set
$$Z_k:=\Vect(u^l(x_i))_{i \in D, i\leq k, \; 0 \leq l<n_i.}$$
In particular, $Z_{-1}=\{0\}$.

Note that the family that consists of all the vectors $u^k(b)$, with $k \in \Z$ and $b \in A$,
and all the vectors $u^l(x_k)$, with $k \in D$ and $l \in \lcro 0,n_k-1\rcro$, is a basis of the $\F$-vector space $V$.

\vskip 3mm
We are ready to define an automorphism $a \in \GL(V)$ such that $au$ is elementary and $p(a)=0$.
Once more, we can assume that $p$ is monic.
We choose one of its roots $\alpha$, and we write
$p(t)=t^2-\lambda t-\mu$, with $\mu \neq 0$.
We define $a \in \End(V)$ on the above basis as follows:
\begin{itemize}
\item For every $k\in D$, we put
$$a(u^{k+1}(c))=x_{k} \quad \text{and} \quad a(x_{k})=\lambda\, x_{k}+\mu\, u^{k+1}(c).$$
\item For every other vector $y$ in that basis, we put $a(y):=\alpha y$.
\end{itemize}
One easily checks that $p(a)=0$, and hence $a \in \GL(V)$. Moreover, $a$ and $a^{-1}$ stabilize $W'$.
Now, we put $v:=au$, so that $v$ stabilizes $W'$. Obviously $v$ induces
an elementary automorphism of $W'$. The following result will help us complete the proof:

\begin{claim}
The automorphism $v$ is elementary.
\end{claim}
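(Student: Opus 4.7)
The plan is to prove that $V^v$ is free as an $\F[t,t^{-1}]$-module, i.e., that $v$ is elementary, by establishing three facts in sequence: $W'$ is a $v$-stable free submodule of $V^v$; the quotient $V^v/W'$ is super-elementary; and the resulting short exact sequence splits because $\F[t,t^{-1}]$ is a principal ideal domain.

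For the first fact, for each $b \in A \setminus \{c\}$ and every $k \in \Z$, the vector $u^k(b)$ is a basis vector lying outside the special sets $\{u^{k'+1}(c) : k' \in D\}$ and $\{x_{k'} : k' \in D\}$, so $a(u^k(b)) = \alpha\,u^k(b)$ and hence $v(u^k(b)) = \alpha\,u^{k+1}(b)$. Setting $f_k^{(b)} := \alpha^k u^k(b)$ yields a genuine bilateral shift $v(f_k^{(b)}) = f_{k+1}^{(b)}$, so the $v$-cyclic submodule $\F[t,t^{-1}]\cdot_v b$ is super-elementary; the direct sum decomposition $W' = \bigoplus_{b \in A \setminus \{c\}}\F[t,t^{-1}]\cdot_v b$ then exhibits $W'$ as a free $\F[t,t^{-1}]$-module under $v$.

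For the second fact, I plan to invoke Lemma \ref{basicelementarylemma} on the quotient $V/W'$ with the vector $\xi := \overline{u^{-1}(c)}$; since $V/W'$ is infinite-dimensional (it contains the linearly independent family $(\overline{u^k(c)})_{k \in \Z}$), it suffices to verify that $V/W' = \Vect(v^j\xi)_{j \in \Z}$. Direct computation yields $v^{-j}\xi = \alpha^{-j}\,\overline{u^{-1-j}(c)}$ for $j \geq 0$, covering all $\overline{u^l(c)}$ with $l \leq -1$, and $v\xi = \alpha\,\overline{c}$. The remaining basis vectors of $V/W'$ organize into blocks $B_k := \{\overline{u^k(c)},\overline{x_k},\overline{u(x_k)},\dots,\overline{u^{n_k - 1}(x_k)}\}$ for $k \in D$, together with, in the case $D = \lcro 0,m\rcro$, the tail $\{\overline{u^l(c)} : l > m+1\}$. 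Block $B_0$ is covered by iterating $v$ on $\xi$, using $v(\overline{c}) = \overline{x_0}$ and $v(\overline{u^l(x_0)}) = \alpha\,\overline{u^{l+1}(x_0)}$ for $0 \leq l < n_0 - 1$. The inductive step rests on the key computation
\begin{equation*}
v\bigl(\overline{u^{n_{k-1}-1}(x_{k-1})}\bigr) = \delta_{k-1}\,\mu\,\overline{u^k(c)} \;+\; \bigl(\text{elements of } B_0 \cup \dots \cup B_{k-1}\bigr),
\end{equation*}
obtained by writing $u^{n_{k-1}}(x_{k-1}) = w + z$ with $w \in W_0$ and $z \in Z_{k-1}$, applying $a$, and discarding the parts that land in previous blocks modulo $W'$: the only surviving contribution outside those blocks comes from the term $\mu\,\overline{u^k(c)}$ in $a(\delta_{k-1}\,x_{k-1})$, and its coefficient $\delta_{k-1}\,\mu$ is nonzero because $\delta_{k-1}$ is the nonzero leading coefficient of the stratification of $V/W$ at level $k-1$ and $\mu \neq 0$ by non-derogatoriness of $p$. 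Once $\overline{u^k(c)}$ has entered the span, the remainder of $B_k$ follows by further applications of $v$; if $D$ is finite, iterating $v$ past block $B_m$ sweeps out the tail as well.

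For the third fact, over the principal ideal domain $\F[t,t^{-1}]$ every short exact sequence of modules with free quotient splits, so $V^v \cong W' \oplus V^v/W'$ as $\F[t,t^{-1}]$-modules; both summands are free, hence $V^v$ is free and $v$ is elementary. The main obstacle lies in the second step: extracting the nonzero coefficient of $\overline{u^k(c)}$ from $v(\overline{u^{n_{k-1}-1}(x_{k-1})})$ modulo the previously covered span, which depends crucially on the nonvanishing of $\delta_{k-1}\,\mu$ supplied jointly by the construction of the stratification of $V/W$ and the non-derogatoriness of $p$.
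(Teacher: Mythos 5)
Your proposal is correct and follows essentially the same route as the paper: the heart of both arguments is the induction driven by the computation $v\bigl(u^{n_{k}-1}(x_{k})\bigr)\equiv \delta\mu\,u^{k+1}(c)$ modulo the previously covered subspace, with $\delta\mu\neq 0$ supplied by the stratification of $V/W$ and the non-derogatoriness of $p$, together with Lemma \ref{basicelementarylemma}. The only (cosmetic) difference is the final packaging --- you pass to the quotient $V/W'$ and split the resulting short exact sequence with free quotient, whereas the paper shows directly that $V=W'+\Vect(v^k(c))_{k\in\Z}$ and deduces that the sum is direct from the infinite-dimensionality of $V/W'$ --- and both finishes are valid.
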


\begin{proof}
We shall prove that $V=W' + \Vect(v^k(c))_{k \in \Z}$. Since $V/W'$ is infinite-dimensional, it will ensue that
$V=W' \oplus \Vect(v^k(c))_{k \in \Z}$, that $\Vect(v^k(c))_{k \in \Z}$ is infinite-dimensional, and finally that
$v$ is elementary.

We set $V':=W' + \Vect(v^k(c))_{k \in \Z}$, a linear subspace which is stable under both $v$ and $v^{-1}$.
We prove by induction that $V'$ includes $W_k+Z_{k-1}$ for every non-negative integer $k$.

First of all, since $v(u^k(c))=\alpha u^{k+1}(c)$ for all $k<0$, we find by downward induction that $V'$
contains $u^k(c)$ for all $k \leq 0$. Therefore, $V'$ includes $W_0=W_0+Z_{-1}$.

Now, let $k$ be a non-negative integer such that $V'$ includes $W_k+Z_{k-1}$.

If $k \not\in D$, then $k$ exceeds the greatest element of $D$ and the definition of $a$ yields that
$Z_k=Z_{k-1}$ and $v(u^k(c))=\alpha u^{k+1}(c)$; since $V'$ is stable under $v$ and contains $u^k(c)$, we deduce that
it contains $u^{k+1}(c)$, and hence $V'$ includes $W_{k+1}+Z_{k-1}=W_{k+1}+Z_k$.

Assume now that $k \in D$. Then the definition of $a$ shows that
$v(u^k(c))=x_k$, and hence $x_k \in V'$. Then we find that
$V'$ also contains $v^l(x_k)=\alpha^l u^l(x_k)$ for all $l \in \lcro 1,n_k-1\rcro$.
Hence, $V'$ includes $W_k+Z_k$. In particular, $V'$ contains $u^{n_k-1}(x_k)$.

Finally, we know from the choice of the $x_i$'s that $u(u^{n_k-1}(x_k))=\delta x_k+y$ modulo $W_0+Z_{k-1}$
for some non-zero scalar $\delta$ and some vector $y$ of $\Vect(u^l(x_k))_{0<l<n_k}$.
Applying $a$ leads to $v(u^{n_k-1}(x_k))=\delta \mu\, u^{k+1}(c)$ modulo $W_k+Z_k$.
Since $\delta \mu \neq 0$, we deduce that $V'$ contains $u^{k+1}(c)$,
and we conclude that $V'$ includes $W_{k+1}+Z_{k}$.

Hence, by induction we conclude that $V'=V$, which finishes the proof.
\end{proof}

We have just shown that $u$ is $p$-adjacent to an elementary automorphism.
This was the last remaining case in the proof of Theorem \ref{adjacencytheo}, and
this result is now fully established. This also completes the proof of Theorem \ref{theo3invol}.

\section{Applications of decompositions into the product of three quadratic automorphisms}

\subsection{Decompositions into products with four factors}

Here, we derive Theorem \ref{theo4general} from Theorem \ref{theo3}. This is based on the following simple lemma:

\begin{lemma}\label{4to3lemma}
Let $u \in \GL(V)$ have a dominant eigenvalue, and let $p\in \F[t]$ be a non-derogatory polynomial
with degree $2$. Then $u$ is $p$-adjacent to an automorphism with no dominant eigenvalue.
\end{lemma}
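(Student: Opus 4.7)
The plan is to exploit the abundance of directions of $V$ that are fixed by $u$ up to scaling. Denote by $\lambda$ the dominant eigenvalue of $u$; from $\rk(u-\lambda\id_V)<\dim V$ and the rank theorem one gets that $W:=\Ker(u-\lambda\id_V)$ has dimension $\dim V$, so $u$ acts as the scalar $\lambda$ on a subspace whose dimension equals that of $V$. This gives us essentially complete freedom to design $au$ on a piece of full dimension, and we will use it to ``spread out'' the spectrum of $au$. Concretely, I would first fix a splitting $W=W_1\oplus T$ with $\dim W_1=\dim T=\dim V$, pick an arbitrary complement $W''$ of $W$ in $V$ (of dimension less than $\dim V$), and set $S:=W''\oplus W_1$. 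This gives $V=S\oplus T$ with $\dim S$ infinite and $u_{|T}=\lambda\,\id_T$; the reason $W_1$ is included in $S$ is simply to guarantee that $\dim S$ is infinite, which will matter in the irreducible case.

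Because $\dim S$ and $\dim T$ are both infinite, each splits as a direct sum of two-dimensional subspaces; write $T=\underset{i\in I}{\bigoplus}T_i$ with $|I|=\dim V$, and likewise $S=\underset{j\in J}{\bigoplus}S_j$. On every such $2$-dimensional summand I would define $a$ as an invertible $2$-by-$2$ operator annihilated by $p$ (assuming $p$ monic): the diagonal operator with eigenvalues $x$ and $y$ if $p(t)=(t-x)(t-y)$ with $x\ne y$ in $\F$; the Jordan block $\bigl[\begin{smallmatrix}x&1\\0&x\end{smallmatrix}\bigr]$ if $p(t)=(t-x)^2$; a companion matrix of $p$ if $p$ is irreducible over $\F$. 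Gluing these blocks produces $a\in\GL(V)$ (invertibility comes from $p(0)\ne 0$) with $p(a)=0$, and by construction $a$ stabilizes every $T_i$; consequently so does $v:=au$.

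To finish, I will check that $v$ has no dominant eigenvalue: for each $\mu\in\F$, since $(v-\mu\,\id_V)$ maps $T$ into $T$, it is enough to bound $\rk(v-\mu\,\id_V)$ from below by $\underset{i\in I}{\sum}\rk(v_{|T_i}-\mu\,\id_{T_i})$, then show that each term in this sum is at least $1$, which forces the total to be at least $|I|=\dim V$. Since $u_{|T_i}=\lambda\,\id_{T_i}$, we have $v_{|T_i}=\lambda\,a_{|T_i}$. In the first case $v_{|T_i}$ has two distinct eigenvalues $\lambda x$ and $\lambda y$, so the claim is clear. In the second, $v_{|T_i}$ is similar to a Jordan block with unique eigenvalue $\lambda x$, and therefore $v_{|T_i}-\lambda x\,\id_{T_i}$ has rank $1$ (while for $\mu\ne \lambda x$ it has rank $2$). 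In the third, the characteristic polynomial of $v_{|T_i}$ is $\lambda^2\,p(t/\lambda)$, which remains irreducible over $\F$, so $v_{|T_i}-\mu\,\id_{T_i}$ is invertible for every $\mu\in\F$.

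The only subtle point I anticipate is exactly this last case: the existence of an operator annihilated by an irreducible $p$ of degree $2$ on a subspace requires that the subspace have even or infinite dimension, because any such subspace becomes a vector space over the quadratic extension $\F[t]/(p)$. That is precisely the reason the complement $S$ of $T$ is arranged at the outset to be infinite-dimensional rather than being taken as an arbitrary complement of $W$ in $V$.
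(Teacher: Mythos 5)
Your proof is correct. The quadratic factor you build is essentially the same as the paper's (a direct sum of two-dimensional cyclic blocks annihilated by $p$, with the companion matrix handling the irreducible case), and all the details check out: $W:=\Ker(u-\lambda\id_V)$ does have dimension $\dim V$ by the rank theorem, the images $(v-\mu\id_V)(T_i)\subset T_i$ are in direct sum so the rank of $v-\mu\id_V$ is indeed bounded below by $\sum_i \rk\bigl(v_{|T_i}-\mu\id_{T_i}\bigr)$, and your three-case analysis of $\lambda a_{|T_i}$ is sound. Where you diverge from the paper is in how the conclusion is reached. The paper first proves a general transfer statement: for \emph{any} $a\in\GL(V)$, if both $u$ and $au$ have dominant eigenvalues then so does $a$ (writing $au=\mu\id_V+b$, $u=\lambda\id_V+c$ and using $\rk(b-ac)\leq\rk b+\rk c<\dim V$). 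With that in hand it suffices to exhibit any $a$ with $p(a)=0$ and no dominant eigenvalue, so the paper's two-dimensional blocks can be placed on an \emph{arbitrary} decomposition of $V$, with no reference to $u$. You instead tailor the decomposition to $u$, placing $\dim V$ of the blocks inside the eigenspace $\Ker(u-\lambda\id_V)$ so that $v=au$ restricts to $\lambda a_{|T_i}$ there, and then verify the absence of a dominant eigenvalue by hand. Your route is more concrete and self-contained on this one lemma; the paper's rank-subadditivity step is slicker, decouples the construction of $a$ from $u$ entirely, and is the kind of observation that is reusable elsewhere. Both are complete proofs.
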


\begin{proof}
First of all, we let $a \in \GL(V)$ and we assume that $au$ has a dominant eigenvalue.
We claim that $a$ has a dominant eigenvalue. Indeed, let us write $au=\mu \id_V+b$ and $u=\lambda \id_V+c$ where
$(\lambda,\mu)\in (\F \setminus \{0\})^2$
and $(b,c) \in \End(V)^2$ satisfy $\rk(b)<\dim V$ and $\rk(c)<\dim V$. Then
$\lambda a+ac=\mu \id_V+b$, whence $a=\lambda^{-1}\mu \id_V+d$ for $d:=\lambda^{-1}(b- ac)$,
and we note that
$$\rk(d)=\rk(b-ac) \leq \rk(b)+\rk(ac) \leq \rk(b)+\rk(c)<\dim V.$$

Hence, in order to conclude it suffices to find an automorphism $a \in \GL(V)$ that is annihilated by $p$ but has no dominant eigenvalue.
To do this, we note that we can find a splitting $V=\underset{i \in \kappa}{\bigoplus} V_i$,
where $\kappa$ denotes the dimension of $V$, and the $V_i$'s are $2$-dimensional linear subspaces. For each $i \in \kappa$,
we choose a cyclic endomorphism $a_i$ of $V_i$ with characteristic polynomial $p$, so that
$p(a_i)=0$ and $\rk(a_i-\lambda \id_{V_i}) \geq 1$ for all $\lambda \in \F$.
 Then we define $a$ as the endomorphism of $V$ whose restriction to $V_i$ equals $a_i$ for all $i \in \kappa$.
We see that $p(a)=0$ and hence $a \in \GL(V)$. Moreover, for all $\lambda \in \F$ we
have $\im(a-\lambda \id_V)=\underset{i \in \kappa}{\bigoplus} \im(a_i-\lambda \id_{V_i})$
and we conclude that $\rk(a-\lambda \id_V) \geq \kappa$. Therefore, $au$ has no dominant eigenvalue.
\end{proof}

From here, the proof of Theorem \ref{theo4general} is straightforward. Let $u \in \GL(V)$, and let $p_1,p_2,p_3,p_4$
be non-derogatory split polynomials with degree $2$. Choose a root $\lambda$ of $p_1$.
If $u$ has no dominant eigenvalue, then neither does $\lambda^{-1}u$,
and hence it is a $(p_2,p_3,p_4)$-product. Noting that $p_1(\lambda\cdot \id_V)=0$, we deduce that $u$
is a $(p_1,p_2,p_3,p_3)$-product.
If $u$ has a dominant eigenvalue, we deduce from Lemma \ref{4to3lemma} that it is $p_1^\sharp$-adjacent to an automorphism $v$
with no dominant eigenvalue; then Theorem \ref{theo3} shows that $v$ is a $(p_2,p_3,p_4)$-product, and we conclude that
$u$ is a $(p_1,p_2,p_3,p_4)$-product.

\subsection{Three factors, with an acceptable dominant eigenvalue}\label{sufficientdominantSection}

Here, we derive Theorem \ref{dominanteigenvalueCS} from Theorem \ref{theo3}.
The proof is based upon the following general result that was obtained in \cite{dSPidempotentinfinite2}
(lemma 15 there):

\begin{lemma}[Reduction Lemma]\label{reductionlemma}
Let $u \in \End(V)$, where $V$ is an infinite-dimensional vector space.
Assume that $u$ has a dominant eigenvalue $\lambda$ and that $u-\lambda \id_V$ has infinite rank.
Then there exists a decomposition $V=V_1 \oplus V_2$
into linear subspaces that are stable under $u$ and such that:
\begin{enumerate}[(i)]
\item $V_1$ is infinite-dimensional;
\item $u_{|V_1}$ has no dominant eigenvalue;
\item $u(x)=\lambda x$ for all $x \in V_2$.
\end{enumerate}
\end{lemma}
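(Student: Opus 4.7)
The strategy is to first reduce to $\lambda = 0$ by replacing $u$ with $u - \lambda \id_V$: this preserves $u$-invariance of subspaces, rewrites $u(x) = \lambda x$ on $V_2$ as $(u - \lambda \id_V)(x) = 0$, and translates the conclusion on dominant eigenvalues via a shift. Under this normalization, let $r := \rk u$, which is infinite by hypothesis and strictly less than $\dim V$, and set $W := \im u$ and $K := \Ker u$, of respective dimensions $r$ and $\dim V$ (by the rank theorem in infinite cardinal arithmetic). I will construct an $u$-invariant $V_1 \supset W$ of dimension $r$ together with a complement $V_2 \subset K$, and the construction will be of the form $V_1 := W + C$ where $C$ is a well-chosen complement of $K$ in $V$.

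The critical feature I would demand from $C$ is the transversality $C \cap W = \{0\}$. To build such a $C$, I would take the quotient $q \colon V \to V/K$, fix a decomposition $V/K = q(W) \oplus U$, pick lifts $w_i \in W$ of a basis $(e_i)_{i \in I_W}$ of $q(W)$, and pick arbitrary lifts $\widehat{e}_j \in V$ of a basis of $U$. To enforce transversality with $W$, I then twist the $w_i$'s into $w_i + k_i$, where the $k_i \in K$ are selected so that the classes $([k_i])_{i \in I_W}$ are linearly independent in $K/(W \cap K)$. Such a choice is possible because $\dim K/(W \cap K) = \dim V > r \geq |I_W|$. The subspace $C := \Vect(w_i + k_i)_{i \in I_W} + \Vect(\widehat{e}_j)_{j \in I_U}$ is then a complement of $K$ in $V$, and tracking through the definitions shows $C \cap W = \{0\}$: any element of this intersection must have trivial $U$-coordinates (by applying $q$), and the remaining coefficients vanish by the chosen independence in $K/(W \cap K)$.

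With $C$ in hand, set $V_1 := W + C = W \oplus C$ (the sum is direct because $W \cap C = \{0\}$), so $\dim V_1 = r + r = r$ is infinite, and $u(V_1) \subset \im u = W \subset V_1$ shows $V_1$ is $u$-invariant. Take $V_2$ to be any complement of $V_1 \cap K$ inside $K$: then $V = V_1 \oplus V_2$ because $V_1 + K \supset C + K = V$, and by construction $V_2 \subset K = \Ker u$. To check that $u|_{V_1}$ has no dominant eigenvalue, note that for $\mu = 0$ one has $u(V_1) = W$ of dimension $r = \dim V_1$. For $\mu \neq 0$, the restriction $(u - \mu \id_V)|_C$ is injective, for if $u(c) = \mu c$ with $c \in C$ then $\mu c \in W \cap C = \{0\}$ and hence $c = 0$; consequently $\dim (u - \mu \id_V)(V_1) \geq \dim C = r = \dim V_1$, so $\mu$ is not a dominant eigenvalue of $u|_{V_1}$.

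The main obstacle I expect is the construction of $C$ with $C \cap W = \{0\}$: a naive complement of $K$ can meet $W$ in a very large subspace, for instance if $u$ has a nonzero eigenvalue whose eigenspace lies in $W$ and has dimension $r$, in which case $V_1 = W + C$ built from a careless $C$ would inherit that eigenvalue as a dominant one. The twisting by the $k_i$ in the construction of $C$ is precisely what prevents this failure.
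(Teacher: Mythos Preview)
Your argument is correct. The reduction to $\lambda=0$ is harmless, the construction of the transverse complement $C$ via the twisting by the $k_i$ is clean and the verification that $C\cap W=\{0\}$ goes through exactly as you describe, and the final check that $u|_{V_1}$ has no dominant eigenvalue (splitting into the cases $\mu=0$ and $\mu\neq 0$, the latter using the injectivity of $(u-\mu\id_V)|_C$ forced by $C\cap W=\{0\}$) is sound. The stability of $V_2$ under $u$ is immediate since $V_2\subset\Ker u$.

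As for comparison with the paper: this lemma is not proved in the present article. It is quoted from \cite{dSPidempotentinfinite2} (lemma~15 there), so there is no in-paper argument to set yours against. Your proof is self-contained and elementary, relying only on basic linear algebra over infinite cardinals; it would serve perfectly well as a direct proof here in place of the citation.
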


Now, we can prove Theorem \ref{dominanteigenvalueCS}.
Let $p_1,p_2,p_3$ be split non-derogatory polynomials of $\F[t]$ with degree $2$,
and let $u$ be an automorphism of an infinite-dimensional vector space $V$.
Assume that $u$ has a dominant eigenvalue $\lambda$ that is acceptable for $(p_1,p_2,p_3)$ and for which $u-\lambda \id_V$ has infinite rank.

We can find a decomposition $V=V_1\oplus V_2$ that satisfies the conditions of Lemma \ref{reductionlemma} with respect to the pair $(u,\lambda)$.
Since both $V_1$ and $V_2$ are stable under $u$, the automorphism $u$ induces respective
automorphisms $u_1$ and $u_2$ of $V_1$ and $V_2$. On the one hand, Proposition \ref{homothetieprop1} shows that
$u_2$ is a $(p_1,p_2,p_3)$-product. On the other hand, the automorphism $u_1$ of $V_1$ induced by $u$ has no dominant eigenvalue and $V_1$ is infinite-dimensional,
and hence Theorem \ref{theo3} shows that $u_1$ is a $(p_1,p_2,p_3)$-product.
By Remark \ref{directsumremark}, we conclude that $u$ is a $(p_1,p_2,p_3)$-product.

\end{document}